\newtheorem{thm}{Theorem}[section]
\newtheorem{lem}[thm]{Lemma}
\newtheorem{prop}[thm]{Proposition}
\newtheorem{cor}[thm]{Corollary}
\newtheorem{conj}[thm]{Conjecture}
\newtheorem{claim}{Claim}
\theoremstyle{definition}
\newtheorem{defin}[thm]{Definition}
\newcommand{\eps}{\varepsilon}
\newcommand{\Z}{\mathbb{Z}}
\newcommand{\R}{\mathbb{R}}
\newcommand{\lk}{\mathrm{lk}}
\newcommand{\kol}{\color{black}}
\newcommand{\koll}{\color{black}}
\newcommand{\mc}[1]{\mathcal{#1}}
\newcommand{\bb}[1]{\mathbb{#1}}
\newcommand{\bd}[1]{\partial #1}
\begin{document}

\title[Torsion groups do not act on $2$-dimensional $\mathrm{CAT}(0)$~complexes]{Torsion groups do not act on $2$-dimensional $\mathrm{CAT}(0)$ complexes}

\author[S.~Norin]{Sergey Norin$^{*}$}
\address{
Department of Mathematics and Statistics,
McGill University,
Burnside Hall,
805 Sherbrooke Street West,
Montreal, QC,
H3A 0B9, Canada}
\email{sergey.norin@mcgill.ca}
\thanks{$*$ Partially supported by NSERC}

\author[D.~Osajda]{Damian Osajda$^{\dag}$}
\address{Instytut Matematyczny,
	Uniwersytet Wroc\l awski\\
	pl.\ Grun\-wal\-dzki 2/4,
	50--384 Wroc\-{\l}aw, Poland}
\address{Institute of Mathematics, Polish Academy of Sciences\\
	\'Sniadeckich 8, 00-656 War\-sza\-wa, Poland}
\email{dosaj@math.uni.wroc.pl}
\thanks{$\dag$ Partially supported by (Polish) Narodowe Centrum Nauki, UMO-2017/25/B/ST1/01335.}

\author[P.~Przytycki]{Piotr Przytycki$^{\ddag}$}
\address{
Department of Mathematics and Statistics,
McGill University,
Burnside Hall,
805 Sherbrooke Street West,
Montreal, QC,
H3A 0B9, Canada}

\email{piotr.przytycki@mcgill.ca}
\thanks{$\ddag$ Partially supported by Narodowe Centrum Nauki UMO-2015/18/M/ST1/00050, NSERC, and FRQNT}

\begin{abstract}
	
	We show, under mild hypotheses, that if each element of a finitely generated group acting on a $2$-dimensional $\mathrm{CAT}(0)$ complex has a fixed point, then there is a global fixed point. In particular all actions of finitely generated torsion groups on such complexes have global fixed points.
	The proofs rely on Masur's theorem on periodic trajectories in rational billiards, and Ballmann--Brin's
	methods for finding closed geodesics in $2$-dimensional locally $\mathrm{CAT}(0)$ complexes.
    As another ingredient we prove that the image of an immersed loop in a graph of girth $2\pi$ with length not commensurable with $\pi$ has diameter $>\pi$.
This is closely related to a theorem of Dehn on tiling rectangles by squares.
\end{abstract}

\maketitle

\section{Introduction}

Let $H$ be a group acting properly and cocompactly on
a $2$-dimensional CAT(0) complex $X$. In \cite{BB} Ballmann and Brin proved \emph{Rank Rigidity} for $H$, saying that if each edge of $X$ is in at least two 2-cells, then either $H$ has an element of rank~$1$, or~$X$ is a Euclidean building. Two other strong\-ly related questions on CAT(0) groups have remained open even in the same $2$-dimensional setting. The first one is the \emph{Tits Alternative} stating that subgroups of $H$ are either virtually abelian or contain free subgroups. The second one
is even more basic (as it might be seen as a first step for proving the Tits Alternative):

\begin{center}
	\textbf{Question.} \emph{Can $H$ have an {infinite torsion subgroup} $G$?}
\end{center}
(See e.g.\ \cite{Swe}, \cite[Quest 2.11]{BestvinaProblemOLD}, \cite[Quest 8.2]{BridsonProblem}, \cite[Prob 24]{KapovichProblems}, \cite[\S~IV.5]{Cap} for appearances of the problem.)
Surprisingly, prior to our work the answer was not known even in the otherwise very well understood case of lattices
in isometry groups of Euclidean buildings of type $\widetilde A_2$. Swenson \cite{Swe} proved that for $1$-ended $H$, a negative answer to the Question implies the non-existence of cut points in the CAT(0) boundary of $X$. (Later, Papasoglu and Swenson \cite{PapSwe} showed the non-existence of boundary cut points independent of the answer to the Question.) Moreover, Swenson answered the Question in the negative for $G=H$.
Recently, Papasoglu and Swenson \cite{PapSwe2} proved that if $G<H$ is an infinite torsion subgroup, then $G$ does not fix a point in the limit set $\Lambda G$.
(The above results are independent of the dimension of the CAT(0) space.)
Furthermore, if $G$ is finitely presented, then it also acts properly and cocompactly on a $2$-dimensional CAT(0) complex \cite[Thm 1.1]{HMP}, so $G$ is not infinite torsion.

\medskip

Generalising the Question by discarding $H$ one can ask if there is an infinite torsion group $G$ acting without a global fixed point on a $2$-dimensional CAT(0) complex.
It is natural to ask this question for $G$ finitely generated, since infinitely generated countable torsion groups can act properly on trees (see \cite[II.7.11]{BH}).
We answer this generalisation of the Question, as well as the Question itself, in the negative in Corollaries~\ref{cor:proper} and~\ref{cor:subgroup}.
These are two consequences of our main Theorem~\ref{thm:torsion}, concerning
more general actions of groups on $2$-dimensional CAT(0) complexes. Under mild hypotheses, it states
that locally elliptic actions have global fixed points. This result as well as both corollaries are new
even in the case of $2$-dimensional Euclidean buildings. In particular, they confirm the corresponding special case of \cite[Conj~1.2]{Mar}, and extend results of \cite{Parreau}. Note that finite-dimensionality assumption is important, since there are infinite torsion Grigorchuk groups that are amenable \cite{Gri}, and hence they act properly on a Hilbert space, which is CAT(0). Also Burnside groups, that is, infinite torsion groups of bounded exponent can act without a global fixed point on infinite dimensional CAT(0) cubical complexes \cite{Osa}.

\medskip

We now pass to discussing the general setup for Theorem~\ref{thm:torsion}.
Let $X$ be a $2$-dimensional simplicial complex, which we will call a \emph{triangle complex}.
We assume additionally that $X$ has a \emph{piecewise smooth Riemannian metric}, which is a family of smooth Riemannian metrics $\sigma_T, \sigma_e$ on the triangles and edges such that the restriction of $\sigma_T$ to $e$ is $\sigma_e$ for each $e\subset T$. Riemannian metrics $\sigma_T, \sigma_e$ induce metrics (i.e.\ distance functions) $d_T,d_e$ on triangles and edges. We then equip $X$ with the \emph{quotient pseudometric}~$d$ (see \cite[I.5.19]{BH}).

We assume that the triangles containing each vertex $v$ of $X$ belong to only finitely many isometry classes of $\sigma_T$.
We also assume that $(X,d)$ is a complete length space. This holds for example:
\begin{itemize}
\item
if there are only finitely many isometry classes of $\sigma_T$ (see \cite[I.7.13]{BH} for $X$ an \emph{$M_\kappa$ complex}, meaning that each $\sigma_T$ has Gaussian curvature $\kappa$ and geodesic sides; the general case follows using a bilipschitz map from $X$ to an $M_\kappa$ complex), or
\item
if $X$ is the space $\bf X$ for the tame automorphism group $\mathrm{Tame}(\mathbf k^3)$, with $\mathbf k$ of characteristic $0$ \cite[Prop 5.4 and~Lem 5.8]{LP}. Some cells of $\bf X$ are polygons instead of triangles, but we can easily transform $\bf X$ into a triangle complex by subdividing.
\end{itemize}

We consider an action of a group $G$ on $X$ by \emph{automorphisms}, i.e.\ simplicial automorphisms that preserve the metrics $\sigma_T$. Consequently, they are isometries of $(X,d)$.

See Section~\ref{sec:spaces} for the discussion of the CAT(0) property and the definition of having \emph{rational angles}. For example, if the
triangles of $X$ have all angles commensurable with $\pi$, then $X$ has rational angles.

\begin{thm}
\label{thm:torsion}
Let $(X,d)$ be a $\mathrm{CAT}(0)$ triangle complex.
Let $G$ be a finitely generated group acting on $X$ without a global fixed point.
Assume that
\begin{enumerate}[(i)]
\item each element of $G$ fixing a point of $X$ has finite order, or
\item $X$ is locally finite, or
\item $X$ has rational angles {\kol with respect to $G$.}
\end{enumerate}
Then $G$ has an
element with no fixed point in $X$.
\end{thm}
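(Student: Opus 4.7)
My plan is to argue by contradiction: assume every element of $G$ has a fixed point in $X$ (the action is \emph{locally elliptic}), and deduce that $G$ has a global fixed point, contradicting nontriviality. The initial reduction is the Bruhat--Tits/Cartan fixed point theorem: in the complete CAT(0) space $(X,d)$, any bounded $G$-orbit has a circumcenter that is $G$-fixed, so one may assume that every $G$-orbit is unbounded. The goal becomes to exhibit an element $g\in G$ of positive translation length $|g|:=\inf_{x\in X} d(x,gx)>0$. Because at each vertex of $X$ the triangles fall into only finitely many isometry classes, isometries of $X$ should be semi-simple, so any such $g$ acts as a translation along an axis and fixes no point, contradicting local ellipticity.

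The construction of $g$ follows the Ballmann--Brin strategy of finding a closed local geodesic in a $G$-invariant subcomplex and lifting it to an axis. Starting from a basepoint $x_0$ and using unboundedness of orbits, I would produce a sequence of loops (words in the finite generating set based at $x_0$) of diverging length, straighten each within its free homotopy class to a local geodesic, and extract a limit. The straightening at a vertex $v$ is governed by the CAT(0) condition: $\lk(v,X)$ is a graph of girth $\ge 2\pi$, and a local geodesic crosses $v$ precisely when its two incident directions lie at link-distance $\ge \pi$.

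The main obstacle is extracting and certifying a genuine closed geodesic without the cocompactness assumption underlying the original Ballmann--Brin work, and this is where the three alternative hypotheses diverge. Under (ii), local finiteness of $X$ permits a direct Arzel\`a--Ascoli extraction of a limit local geodesic. Under (i), the assumption that every point-stabilizer is torsion rigidifies the fixed sets of the generators enough to run a Helly-type induction on their intersections, producing either a common fixed point or a configuration of generators whose product has positive translation length. Under (iii), one passes to flat sub-surfaces of $X$ (unions of triangles with angles commensurable with $\pi$), realises the straightened loops as trajectories in rational polygonal billiards, and invokes Masur's theorem on periodic trajectories to produce a closed one; the auxiliary lemma highlighted in the abstract --- that an immersed loop in a girth-$2\pi$ graph of length not commensurable with $\pi$ has diameter $>\pi$ --- is what certifies that vertex crossings are genuinely locally geodesic rather than degenerating into short link-loops of incommensurable length. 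The most delicate point in all three cases is this last certification: that the produced closed curve really is a local geodesic in $X$ and hence lifts to an axis of some $g \in G$, and that no subtle pathology at non-locally-finite vertices can sabotage the convergence of the straightening procedure.
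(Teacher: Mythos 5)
Your proposal captures the broad Ballmann--Brin flavour and correctly identifies the relevance of girth-$2\pi$ links, the graph-theoretic lemma, and Masur's theorem, but there is a central missing idea and the role of the three hypotheses is misattributed.

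The crucial device in the paper is the construction of a \emph{compact} auxiliary polyhedral surface mapped into $X$. Starting from automorphisms $f,g$ (and $h=g^{-1}f$) with pairwise disjoint nonempty fixed sets, one builds a finite $\Delta$-complex structure on a sphere $S$ with three marked points $p,q,r$, and an equivariant simplicial map $\phi\colon\widetilde S\to X$ from the branched cover of $S$ corresponding to $\langle f_S,g_S\rangle\cong F_2$; a minimal-area choice makes $\phi$ a near-immersion (Lemma~\ref{lem:min surface}). It is on this compact $S$ (or, for triples, on a triangulated disc $\Delta$ in Lemma~\ref{lem:min surface2}) that one looks for a closed local geodesic and pushes it forward via $\phi$ to an axis in $X$. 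Your plan to ``produce a sequence of loops based at $x_0$... straighten each within its free homotopy class'' cannot get off the ground: $X$ itself is simply connected, and without properness there is no good quotient orbihedron in which loops could carry homotopy. The entire point of the equivariant triangulation is to manufacture a compact surrogate in which Masur's theorem (Theorem~\ref{thm:masur}) and the Ballmann--Brin lemmas (Lemmas~\ref{lem:BB1}, \ref{lem:BB2}) apply. Without this, neither ``Arzel\`a--Ascoli extraction'' nor any appeal to Masur has a compact object to act on.

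Second, your division of labour among (i), (ii), (iii) does not match what actually happens, and would not work as you describe. In the paper, the Helly-type induction on fixed sets runs in \emph{all} three cases (Propositions~\ref{prop:pairs}, \ref{prop:triples}, then Helly for $n\ge 4$); it is not a special feature of hypothesis (i). Likewise, Masur's theorem is used whenever the surface $S$ is flat with all vertex links of length commensurable with $\pi$, regardless of which of (i), (ii), (iii) holds. The three hypotheses enter at exactly one delicate spot inside Proposition~\ref{prop:loxodromic}: when some vertex $v$ of $S$ has link of length incommensurable with $\pi$ and a lift $\widetilde v$ in $\widetilde S$ has link a line (not a circle), one needs to produce a pair of directions whose images in $\lk_{\phi(\widetilde v)}$ are more than $\pi$ apart. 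Hypothesis (iii) rules out this situation outright; hypotheses (i) or (ii) guarantee a periodic sub-path of the line link, to which Corollary~\ref{thm:graph} then applies. This is quite different from ``under (i), rigidify fixed sets; under (ii), Arzel\`a--Ascoli; under (iii), Masur.'' Finally, the Bruhat--Tits reduction to unbounded orbits and the appeal to semi-simplicity of isometries are not needed (nor established) in the paper's argument: the proof instead directly shows that any finite collection of elements has a common fixed point, and produces an explicit loxodromic element when this fails.
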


While we believe that Theorem~\ref{thm:torsion} holds also without (i),(ii), and~(iii), this does not seem to be tractable with our methods. For
example, if we wanted to keep the same line of argument as under condition~(iii), we would need in particular the existence of periodic trajectories
in triangular billiards, which is a major open problem (see e.g.\ \cite{M}).

Note that we do not assume that the edges of a triangle $T$ of $X$ have geodesic curvature $0$ in $\sigma_T$ (see $\bf X$ above for an example). On
the other hand, if we made this additional assumption, we could replace each $\sigma_T$ by the Euclidean metric, which would allow us to discard
Lemma~\ref{lem:new} and the last three paragraphs of the proof of Proposition~\ref{prop:loxodromic}, and which would simplify the proof of
Lemma~\ref{lem:link}.

Applying Theorem~\ref{thm:torsion} and then \cite[Thm~1.1]{CL} to the family of fixed point sets for all finitely generated subgroups $G$ of $H$
below gives:

\begin{cor}
\label{cor:bdry}
Let $(X,d)$ be a $\mathrm{CAT}(0)$ triangle complex.
Let $H$ be a group acting on $X$ such that each element of $H$ fixes a point of $X$.
Assume that (i),(ii), or (iii) holds, {\kol with respect to $H$}. Then $H$ fixes a point of $X\cup \partial X$.
\end{cor}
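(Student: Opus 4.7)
Following the hint, the strategy has two stages. First, Theorem~\ref{thm:torsion} is used to show that every finitely generated subgroup of $\overline G$ has a nonempty, closed, convex fixed-point set in $X$; then \cite[Thm~1.1]{CL} is invoked to extract a common fixed point of the resulting family in the bordification $X\cup\partial X$. Note that the paper's terminology (established just before Theorem~\ref{thm:torsion}) declares an action \emph{nontrivial} precisely when it admits no global fixed point, so ``trivial action on $X\cup\partial X$'' is to be read as ``$\overline G$ has a global fixed point in $X\cup\partial X$''.

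Fix a finitely generated subgroup $G\leq\overline G$. Every element of $G$ lies in $\overline G$ and hence fixes a point of $X$. Whichever of the conditions (i), (ii), (iii) is assumed for $\overline G$ descends to $G$: (i) constrains orders of individual elements that fix a point (those elements still belong to $\overline G$), while (ii) and (iii) are intrinsic to $X$. Applying Theorem~\ref{thm:torsion} to $G$ rules out the possibility that $G$ acts without a global fixed point, since otherwise $G$ would contain an element with no fixed point. Hence $\mathrm{Fix}(G)\subseteq X$ is nonempty, and is closed and convex as the intersection of fixed sets of isometries of a CAT(0) space.

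Next, the family $\mathcal F=\{\mathrm{Fix}(G) : G\leq\overline G\ \text{finitely generated}\}$ is filtered under reverse inclusion and enjoys the finite intersection property: for any $G_1,\dots,G_n$ in the index set, the subgroup $H=\langle G_1,\dots,G_n\rangle$ is still finitely generated, and $\mathrm{Fix}(H)=\bigcap_i\mathrm{Fix}(G_i)$, nonempty by the previous step. Theorem~1.1 of \cite{CL} asserts that in a complete CAT(0) space of finite telescopic dimension — certainly so for the $2$-dimensional complex $X$ — any such filtered family of nonempty closed convex subsets has nonempty intersection in the bordification $X\cup\partial X$. A point $\xi$ of that intersection is fixed by every finitely generated subgroup of $\overline G$, hence by every element of $\overline G$, giving the required global fixed point.

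The main obstacle is aligning the hypotheses of \cite[Thm~1.1]{CL} with the present setting: confirming that $X$ has finite telescopic dimension, and that the bordification appearing in \cite{CL} coincides with the visual $X\cup\partial X$. Both verifications are routine for a $2$-dimensional CAT(0) triangle complex, so the corollary follows.
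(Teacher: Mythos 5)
Your argument is correct and is exactly the paper's intended proof: the authors dispose of the corollary with a single sentence instructing the reader to apply Theorem~\ref{thm:torsion} to each finitely generated subgroup and then \cite[Thm~1.1]{CL} to the resulting filtered family of fixed-point sets, which is precisely what you do. No discrepancies.
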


Theorem~\ref{thm:torsion} under condition~(i) can be rephrased in the following way.

\begin{cor}
\label{cor:proper}
Let $(X,d)$ be a $\mathrm{CAT}(0)$ triangle complex. Let $G$ be a finitely generated group acting on $X$ without a global fixed point. Then $G$ has an element of infinite order.
\end{cor}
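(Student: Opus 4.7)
The plan is to derive the corollary by contraposition directly from Theorem~\ref{thm:torsion}(i), using the classical fact that finite-order isometries of complete $\mathrm{CAT}(0)$ spaces always have fixed points.

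Suppose, for contradiction, that every element of $G$ has finite order, i.e.\ $G$ is a torsion group. Then in particular every element of $G$ that happens to fix a point of $X$ has finite order, so hypothesis (i) of Theorem~\ref{thm:torsion} is satisfied \emph{vacuously}. Since we are assuming that the $G$-action on $X$ is nontrivial, Theorem~\ref{thm:torsion} applies and produces an element $g\in G$ with no fixed point in~$X$.

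Now I would invoke the Bruhat--Tits fixed point theorem (see e.g.\ \cite[II.2.8]{BH}): every isometry group of a complete $\mathrm{CAT}(0)$ space with a bounded orbit has a global fixed point. Since $(X,d)$ is, by the standing hypotheses laid out before Theorem~\ref{thm:torsion}, a complete $\mathrm{CAT}(0)$ space, and since $g$ has finite order $n$, the orbit $\{x, gx, g^2x,\ldots,g^{n-1}x\}$ of any point $x\in X$ is finite and hence bounded. Its circumcenter is therefore fixed by the cyclic group $\langle g\rangle$, contradicting the conclusion that $g$ has no fixed point in $X$.

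Consequently, the assumption that $G$ is torsion is untenable, and $G$ must contain an element of infinite order. There is really no obstacle here beyond spotting that (i) becomes a free hypothesis as soon as $G$ is torsion; the rest is the standard circumcenter argument for complete $\mathrm{CAT}(0)$ spaces.
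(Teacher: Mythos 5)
Your proof is correct and is exactly what the paper intends: the paper simply remarks that Corollary~\ref{cor:proper} is a rephrasing of Theorem~\ref{thm:torsion}(i), and you have supplied the one missing ingredient, namely that a finite-order isometry of a complete $\mathrm{CAT}(0)$ space fixes the circumcenter of any finite orbit, so an element with no fixed point must have infinite order. One small terminological slip: condition (i) is not satisfied \emph{vacuously} when $G$ is torsion (there typically are elements fixing points), it is satisfied \emph{a fortiori}; the logic is otherwise fine.
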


The following corollary answers the Question. Note that the finite generation assumption is removed here.

\begin{cor}
\label{cor:subgroup}
Let $(X,d)$ be a $\mathrm{CAT}(0)$ triangle complex.
Let $H$ be a group acting properly and cocompactly on $X$. Then any subgroup $G$ of $H$ is finite or has an element of infinite order.
\end{cor}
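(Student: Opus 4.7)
The plan is to show that any torsion subgroup $G\subseteq\overline{G}$ (one in which every element has finite order) is finite; this is the content of the statement. I will split into two cases according to whether $G$ is finitely generated, the engine being Theorem~\ref{thm:torsion}(i).

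Suppose first that $G$ is finitely generated. Since $\overline{G}$ acts properly on $X$, every point stabilizer in $\overline{G}$ is finite; in particular every element of $G$ fixing a point of $X$ has finite order, so hypothesis~(i) of Theorem~\ref{thm:torsion} holds automatically for $G$. On the other hand, a torsion isometry of a complete $\mathrm{CAT}(0)$ space has a fixed point (the circumcentre of the orbit of any point under the finite cyclic group it generates), so every element of $G$ fixes a point. The contrapositive of Theorem~\ref{thm:torsion} then forces $G$ to fix a point $x\in X$, whence $G\subseteq\St_{\overline{G}}(x)$, which is finite by properness. Thus $G$ is finite.

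For an arbitrary torsion subgroup $G\subseteq\overline{G}$, the previous step applied to each finitely generated subgroup $H\subseteq G$ shows that $H$ is finite; hence $G$ is locally finite. To upgrade local finiteness to finiteness I will bound the order of a finite subgroup of $\overline{G}$ uniformly. Pick a compact $K\subseteq X$ with $\overline{G}\cdot K=X$ (using cocompactness) and set $N=\#\{g\in\overline{G}\mid gK\cap K\neq\emptyset\}$, which is finite by properness. Any finite $F\subseteq\overline{G}$ has a fixed point $p\in X$ by the Bruhat--Tits theorem; translating $p$ into $K$ by an element of $\overline{G}$ conjugates $F$ into the stabilizer of a point of $K$, which is itself contained in the above set, so $|F|\leq N$. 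Consequently every finitely generated subgroup of $G$ has order at most $N$. Choosing $H\subseteq G$ of maximal order and observing that for any $g\in G$ the group $\langle H,g\rangle\subseteq G$ is finitely generated of order $\leq N$ and contains $H$, maximality forces $\langle H,g\rangle=H$, hence $g\in H$; thus $G=H$ is finite.

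The main obstacle I expect is precisely the passage from the finitely generated case to the general one: Theorem~\ref{thm:torsion}(i) disposes of the former in one step, but a locally finite group need not be finite without an extra bound on the orders of its finitely generated subgroups. The saving observation is that the compact-fundamental-domain argument above supplies this bound directly from properness and cocompactness, so no deeper input (such as Bridson's theorem on conjugacy classes of finite subgroups in $\mathrm{CAT}(0)$ groups) is needed, and this step is then routine.
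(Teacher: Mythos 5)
Your proof is correct and follows essentially the same route as the paper: reduce to a maximal finite subgroup $H$ of $G$ using a uniform bound on finite subgroup orders, then show $\langle H,g\rangle$ must be finite for any $g\in G$. The only cosmetic differences are that you re-derive the uniform bound on finite subgroups directly from properness, cocompactness, and Bruhat--Tits (the paper simply cites \cite[II.2.8(2)]{BH} for this), and you invoke Theorem~\ref{thm:torsion}(i) together with the Bruhat--Tits fixed point theorem explicitly, whereas the paper routes through Corollary~\ref{cor:proper}, which packages the same content.
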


\begin{proof}
Suppose that all elements of $G$ have finite order. By \cite[II.2.8(2)]{BH}, there is a finite bound on the size of all finite subgroups of $H$. Let then $F$ be a maximal finite subgroup of $G$. We will prove that $F=G$. Otherwise, there is $g\in G-F$. Then $\langle F, g\rangle$ is finitely generated and thus by Corollary~\ref{cor:proper} it is finite. This contradicts the maximality of $F$.
\end{proof}

In view of the Question and Corollary~\ref{cor:proper} we state the following conjecture, which we could not
find elsewhere in the literature. Observe that, as in the proof of Corollary~\ref{cor:subgroup}, the conjecture implies a negative answer to the Question in the setting of finite-dimensional CAT(0) complexes.

\begin{conj}
	\label{conj:torsion}
	Every finitely generated group acting without a global fixed point on a finite-dimen\-sio\-nal $\mathrm{CAT}(0)$ complex contains an element
	of infinite order.
\end{conj}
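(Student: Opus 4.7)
The plan is to attempt induction on $\dim X$, using Corollary~\ref{cor:proper} as an enhanced base case. Assume $G$ is a finitely generated torsion group acting nontrivially on a finite-dimensional CAT(0) complex $X$; we seek a contradiction. Since every element of $G$ has finite order, each cyclic subgroup acts with a fixed point by the Cartan fixed-point theorem, so the action is locally elliptic. The base cases are: $\dim X = 0$ is trivial, $\dim X = 1$ (trees) follows from Serre's theorem that a finitely generated group acting elliptically on a tree has a global fixed point, and $\dim X = 2$ is exactly Corollary~\ref{cor:proper} (modulo removing hypotheses (i)--(iii)).

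For the inductive step, fix a finite generating set $S\subset G$ and consider the convex function $\phi(x):=\max_{s\in S} d(x,sx)$. If $\phi$ attains its minimum, a circumcenter argument applied to the minimizing set yields a global fixed point. So suppose $\inf\phi = 0$ is not attained. One then tries to locate a canonical $G$-invariant substructure of strictly smaller dimension on which to apply the inductive hypothesis. Candidates include: a minimizing sequence $x_n$ with $\phi(x_n)\to 0$ whose accumulation behavior in $X\cup \partial X$ yields either an interior fixed point or a point at infinity (on whose horoball / parabolic subgroup one recurses); or a canonical invariant convex subcomplex obtained by intersecting fixed sets $\operatorname{Fix}(g)$ over a judicious collection of elements, which is again CAT(0) and, if chosen with care, has strictly smaller dimension. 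Alternatively, one can look at the link $\lk(v,X)$ at a well-chosen vertex, a CAT(1) complex of one lower dimension, and study the induced action of the stabilizer $G_v$.

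The main obstacle is twofold. First, the 2-dimensional base case currently requires hypotheses (i)--(iii); removing them would require, among other things, the periodic-trajectory result for arbitrary triangular billiards, a famously open problem. So even the base of the induction is not yet secured in full generality. Second, none of the proposed dimension-reducing mechanisms descend cleanly: link-based induction fails because $G_v$ need not be finitely generated, and even when it is, the induced action on $\lk(v,X)$ may be trivial while the ambient action is not; the horoball route requires sufficient local-compactness to take subsequential limits; and the ``intersection of fixed sets'' candidate requires a Helly-type theorem for convex subcomplexes in a finite-dimensional CAT(0) complex strong enough to force nonemptiness from pairwise data, which is not available in this generality. I expect that any complete proof will need a genuinely new tool---either such a Helly-type theorem, or a higher-dimensional replacement for the Ballmann--Brin closed-geodesic machinery that substitutes for Masur's theorem on rational billiards used in dimension two.
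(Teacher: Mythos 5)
This statement is a \emph{conjecture} in the paper (Conjecture~\ref{conj:torsion}), not a theorem; the authors explicitly state it as an open problem and offer no proof. So there is no paper proof to compare against, and your write-up is not a proof either --- you correctly stop short of claiming one. What you do instead is an accurate assessment of why the natural inductive strategy is currently blocked: the two-dimensional case (the paper's Theorem~\ref{thm:torsion} / Corollary~\ref{cor:proper}) still needs one of hypotheses (i)--(iii), and removing them would require, as both you and the authors note, progress on periodic trajectories in irrational triangular billiards, which is open. Your diagnosis of the dimension-reduction obstacles is also sound: links carry a $G_v$-action, but $G_v$ need not be finitely generated and the induced action may be trivial even when the ambient one is not; there is no Helly-type machinery in this generality that lets you pass from pairwise to global intersections of fixed-point sets outside dimension two (where the paper uses Ivanov's Helly theorem \cite{I} together with its surface/billiards argument for the $n=2$ and $n=3$ cases); and the boundary/horoball route needs local compactness. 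One small addition worth flagging: the paper's actual route in dimension two is not a dimension reduction at all but a direct construction of a loxodromic element via a minimal-area near-immersed surface, Masur's theorem, and Ballmann--Brin's closed-geodesic techniques, and the authors themselves remark that a higher-dimensional analogue of this machinery is precisely what is missing --- which matches the closing sentence of your assessment.
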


Note that the conjecture is known to hold for $\mathrm{CAT}(0)$ cubical complexes by a work of Sageev \cite[proof of Thm 5.1]{Sag}. The proof relies heavily on the structure of the space with walls for such complexes.

In the proof of Theorem~\ref{thm:torsion} under conditions (i) or (ii) we will need the following graph-theoretic result of independent interest.

Below, by a \emph{graph} we mean a (possibly infinite) metric graph with finitely many possible edge lengths.
A closed edge-path embedded in a graph $\Lambda$ is a \emph{cycle of $\Lambda$}. An embedded edge-path $P$ in $\Lambda$ is a \emph{segment of $\Lambda$} if the endpoints of $P$ have degree (i.e.\ valence) at least three in $\Lambda$, but every internal vertex of $P$ has degree two.

\begin{thm}\label{thm:graph+}
Let $\Gamma$ be a graph, and let $\Gamma'$ be a finite subgraph of $\Gamma$ with all vertices of degree at least two.
Assume that
\begin{itemize}
\item the girth of $\Gamma$ is $\geq 2\pi$, and
\item each $x,y$ in $\Gamma'$ are at distance $\leq \pi$ in $\Gamma$.
\end{itemize}
Then all cycles and segments of $\Gamma'$ have length commensurable with~$\pi$.
\end{thm}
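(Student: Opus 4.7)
The plan is to reduce Theorem~\ref{thm:graph+} to the ``Loop Lemma'' highlighted in the abstract: any immersed loop $\gamma$ in a graph $\Gamma$ of girth $\geq 2\pi$ whose image has diameter $\leq\pi$ in $\Gamma$ has length commensurable with $\pi$. Granted this lemma, the theorem follows in two steps. For a cycle $C$ of $\Gamma'$, view $C$ as an embedded loop in $\Gamma$; its image lies in $\Gamma'$, so has diameter $\leq\pi$, and the Loop Lemma gives $\ell_C\in\Q\pi$. For a segment $P$ of $\Gamma'$ between branch vertices $u,v$ that is not a bridge, use the degree-$\geq 3$ condition at $u,v$ to choose an edge-path $Q$ from $v$ to $u$ in $\Gamma'\setminus\mathrm{int}(P)$ so that $P\cdot Q^{-1}$ is locally injective; the Loop Lemma gives $\ell_P+\ell_Q\in\Q\pi$. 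Varying $Q$ to a second such path $Q'$, the loop $Q\cdot Q'^{-1}$ is also immersed, so $\ell_Q+\ell_{Q'}\in\Q\pi$; combining yields $\ell_Q,\ell_{Q'}\in\Q\pi$ and hence $\ell_P\in\Q\pi$. For a bridge segment $P$, the two components of $\Gamma'\setminus\mathrm{int}(P)$ still satisfy the hypotheses of the theorem, so by induction on $|\Gamma'|$ their cycles and segments have $\Q\pi$-lengths; then an immersed loop that traverses $P$ twice, once in each direction, together with a cycle at $u$ on one side and a cycle at $v$ on the other, has length $2\ell_P+c_1+c_2\in\Q\pi$, yielding $\ell_P\in\Q\pi$.

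The main content is the Loop Lemma, which I would attack by the Dehn/Hamel-basis strategy suggested in the abstract. Suppose $\gamma$ has length $\ell\not\in\Q\pi$. Fix a Hamel basis of $\R$ over $\Q$ containing $\pi$ and produce a $\Q$-linear functional $\phi:\R\to\Q$ with $\phi(\pi)=0$ but $\phi(\ell)\neq 0$. Pass to the universal cover $\tilde\Gamma$ (a metric tree) and let $A\subset\tilde\Gamma$ be the axis of the deck transformation $g$ associated to $\gamma$, which translates $A$ by $\ell$. The diameter-$\leq\pi$ hypothesis says that for any two points $x,y\in A$, some deck transformation $h$ satisfies $d_{\tilde\Gamma}(x,h\cdot y)\leq\pi$; each such $h$ produces a cycle in $\Gamma$ of length $\geq 2\pi$ by the girth assumption. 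Choosing a fundamental domain for $g$ on $A$ and summing the resulting girth inequalities over the shortcut cycles one encounters gives a $\Q$-linear identity in which $\ell$ appears with a nonzero rational coefficient (coming from the number of $g$-translates used) while all other terms lie in $\Q\pi$. Applying $\phi$ kills the $\Q\pi$-part and forces $\phi(\ell)=0$, contradicting the choice of $\phi$.

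The main obstacle is the last summation step. Shortcut cycles may share edges, deck transformations may overlap in complicated ways along $A$, and one must produce a relation in which $\ell$ survives with a definite rational coefficient rather than getting canceled. This is the direct analogue of Kirchhoff's current law in Dehn's electrical-network proof: there, currents flowing through squares sum consistently across each horizontal segment of the tiling; here, the axis $A$ plays the role of a ``vertical direction'' and the shortcut cycles play the role of squares. Transferring this combinatorial accounting from the planar tiling to the tree-dynamical setting, in a way that tracks the $\phi$-values precisely, is the delicate combinatorial heart of the argument and the place where the closest analogy with Dehn's theorem must be exploited.
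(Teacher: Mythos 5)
Your reduction of segments to a ``Loop Lemma'' is an appealing inversion of the paper's logic, but the proposal leaves the Loop Lemma itself unproved, and that lemma is essentially the whole theorem. In the paper, the statement you call the Loop Lemma is Corollary~\ref{thm:graph}, and it is \emph{deduced from} Theorem~\ref{thm:graph+}: once cycles and segments of $\Gamma'=\gamma(C)$ are known to have lengths in $\Q\pi$, the length of $\gamma$ is an integer combination of these because an immersed loop traverses each segment a constant number of times. You propose to run this implication backwards, proving the Loop Lemma directly for arbitrary immersed loops and then extracting segment lengths by subtracting auxiliary loops $Q,Q'$. That extraction step is plausible (with three relations $\ell_P+\ell_Q,\ \ell_P+\ell_{Q'},\ \ell_Q+\ell_{Q'}\in\Q\pi$ you do get $\ell_P\in\Q\pi$, and the degree-$\geq 3$ condition gives room to choose $Q,Q'$), although you would still need to verify existence of the required immersed closed walks in the bridge case. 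But the Loop Lemma for a general immersed $\gamma$ is strictly harder than the embedded-cycle case, and your sketch of it --- pass to the universal cover, pick a fundamental domain on the axis, ``sum the resulting girth inequalities'' --- is exactly where you concede you do not have the argument. This is not a minor detail to be filled in: it is the mathematical content.

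The paper sidesteps the difficulty by never proving the Loop Lemma directly for arbitrary $\gamma$. Instead, for an embedded cycle $C$ it identifies the pairs $(x,y)\in C\times C$ at $C$-distance $\geq\pi$ with a flat cylinder $R(\gamma)$ and shows that the ``shortcut'' chords tile $R(\gamma)$ by genuine squares, so Dehn's theorem (Theorem~\ref{thm:Dehn}) applies cleanly; the set of spliced pairs $Spl(\gamma)$ is \emph{empty} in that case. For a non-embedded loop the set $Spl(\gamma)$ is nonempty and the tiling is no longer all squares; the paper handles only the very special immersed loops that arise from a bar joining two disjoint cycles, where $Spl(\gamma)$ consists of exactly two explicit rectangles, and this requires the bespoke variant Lemma~\ref{lem:Dehn+} together with the area estimate in Claim~\ref{c:cycle}(c) and the chord analysis of Claim~\ref{c:bicycle}. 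General segments are then reached by showing (Claim~\ref{c:reduction}, an exhaustive but elementary case analysis) that every segment is a rational combination of cycles and bars. Your Hamel-basis/holonomy sketch is in the same spirit as Hadwiger's proof of Dehn's theorem, but the hard part --- producing from the hypotheses a tiling-like identity in which $\ell$ appears with a controlled nonzero rational coefficient --- is precisely the part the paper engineers carefully via $R(\gamma)$, $S(K)$, and $Spl(\gamma)$, and it is not automatic for an arbitrary immersed loop.
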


The special case of Theorem~\ref{thm:graph+}, where $\Gamma'=\Gamma$, was established by Ballmann and Brin \cite[Lem~6.1]{BB}, who proved additionally that such $\Gamma$ is a spherical building, and so in particular all its segments have the same length.

Theorem~\ref{thm:graph+} has the following immediate consequence.

\begin{cor}
\label{thm:graph}
Let $\Gamma$ be a graph, and let $\gamma\colon C\to \Gamma$ be a closed edge-path immersed in $\Gamma$. Assume that
\begin{itemize}
\item the girth of $\Gamma$ is $\geq 2\pi$, and
\item each $x,y$ in $\gamma(C)$ are at distance $\leq \pi$ in $\Gamma$.
\end{itemize}
Then the length of $\gamma$ is commensurable with $\pi$.
\end{cor}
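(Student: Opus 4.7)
The plan is to apply Theorem~\ref{thm:graph+} to the image $\Gamma' := \gamma(C)$, viewed as a finite subgraph of $\Gamma$, and then to use the immersion property to bundle the edge-multiplicities of $\gamma$ along each segment or cycle of $\Gamma'$.

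First I would check that $\Gamma'$ satisfies the hypotheses of Theorem~\ref{thm:graph+}. Finiteness is clear since $C$ is a finite cycle. Every vertex $v$ of $\Gamma'$ has degree at least two: at any preimage $t \in \gamma^{-1}(v)$ the immersion condition forces the two edges of $C$ incident to $t$ to map to two \emph{distinct} edges of $\Gamma$ incident to $v$, both of which lie in $\Gamma'$. The girth bound is inherited from $\Gamma$, and the diameter hypothesis ``any two points of $\Gamma'$ are at distance $\leq \pi$ in $\Gamma$'' is exactly the second bullet of the corollary, since $\Gamma' = \gamma(C)$. Theorem~\ref{thm:graph+} then yields that every cycle and every segment of $\Gamma'$ has length commensurable with $\pi$.

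It remains to convert this into a statement about the length of $\gamma$. Writing this length as $\sum_e m_e\,\ell(e)$, where $e$ ranges over the edges of $\Gamma'$ and $m_e \geq 1$ is the number of times $\gamma$ traverses $e$, the key observation is that at any vertex $v$ of degree exactly two in $\Gamma'$, the immersion condition combined with $\gamma(C) \subset \Gamma'$ forces $\gamma$, upon entering $v$ along one of the two edges of $\Gamma'$ at $v$, to exit along the other. Hence the edges of $\Gamma'$ split into maximal paths or closed paths with degree-two internal vertices: either segments between branch points, cycles based at a single branch point (including single-edge self-loops), or, if $\Gamma'$ has no branch points at all, the whole of $\Gamma'$, which is then a single cycle. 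Within each such piece all edges are traversed by $\gamma$ the same number of times, so the length of $\gamma$ is a non-negative integer combination of lengths of segments and cycles of $\Gamma'$, each commensurable with $\pi$ by the previous paragraph. Since $\Q\pi$ is closed under $\Z$-linear combinations, the length of $\gamma$ is commensurable with $\pi$.

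No serious obstacle is expected beyond Theorem~\ref{thm:graph+} itself; the only subtle point is to verify that the decomposition of $\Gamma'$ into segments and cycles covers every edge and that the $\gamma$-multiplicity is genuinely constant on each piece, which is precisely what the ``degree two forces straight continuation'' observation supplies.
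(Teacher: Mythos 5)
Your proof is correct, and it fills in exactly the reduction the paper leaves implicit (the paper states the corollary without a separate proof). You take $\Gamma' = \gamma(C)$, verify its vertices have degree $\geq 2$ via the local-embedding property of $\gamma$, apply Theorem~\ref{thm:graph+}, and then decompose $\Gamma'$ into segments and cycles with constant $\gamma$-multiplicity on each piece, using the fact that a degree-two vertex of $\Gamma'$ forces $\gamma$ to continue straight; since the length of $\gamma$ is a positive integer combination of these piece lengths, it lies in $\mathbb{Q}\pi$. This is the natural and intended argument.
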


Interestingly, Theorem~\ref{thm:graph+} will follow from a generalisation of a classical theorem of Dehn~\cite{Dehn} stating that a rectangle tiled by finitely many squares has commensurable side lengths.

\medskip

\noindent \textbf{Idea of proof of Theorem~\ref{thm:torsion}.} Suppose by contradiction that for each $f\in G$ the set $\mathrm{Fix}(f)$ of fixed points of $f$ is nonempty. We wish to prove inductively that for any finite set of elements $f_1,\ldots, f_n\in G$ (in particular for a generating set) the intersection $\mathrm{Fix}(f_1)\cap \cdots \cap \mathrm{Fix}(f_n)$ is nonempty. Here we illustrate the induction step from $n=2$ to $n=3$. Suppose that $\mathrm{Fix}(f_1),\mathrm{Fix}(f_2),\mathrm{Fix}(f_3)$ pairwise intersect but their triple intersection is empty. We then find a simplicial disc $\Delta$ with decomposition of its boundary into three paths $\partial \Delta=P\cup Q\cup R$ together with a simplicial map $\psi$ from $\Delta$ to $X$ that sends $P,Q,R$ into $\mathrm{Fix}(f),\mathrm{Fix}(g),\mathrm{Fix}(h),$ respectively. We suppose that~$\Delta$ has minimal possible area. Under condition~(iii) on rational angles we find a periodic billiard trajectory $\omega$ in~$\Delta$ outside its $0$-skeleton~$\Sigma$. Then $\psi(\omega)$ develops to an axis in $X$ of a loxodromic element of $G$ and leads to contradiction. If the angle at a vertex $v\in \Sigma$ is not commensurable with $\pi$, then developing the image under $\psi$ of the link at $v$ and using condition~(i) or~(ii) we construct
a closed edge-path in the link of $\psi(v)$ in~$X$ of length not commensurable with $\pi$. Applying Corollary~\ref{thm:graph} to that path we obtain directions at $\psi(v)$ at distance $>\pi$. These directions permit to construct a trajectory through $v$ that develops to an axis of a loxodromic element.

\medskip

\noindent \textbf{Organisation.} In Section~\ref{sec:spaces} we discuss CAT(0) spaces and the rational angles property. In Section~\ref{sec:2} we outline the proof of Theorem~\ref{thm:torsion} for $2$-generated $G$. In Sections~\ref{sec:surface} and~\ref{sec:loxodromic} we fill in the details of that outline. In Section~\ref{sec:3} we complete the proof of Theorem~\ref{thm:torsion}. The proof of Theorem~\ref{thm:graph+} is postponed till Section~\ref{sec:graph}.

\medskip

\noindent \textbf{Acknowledgements.} We thank Werner Ballmann, Martin Bridson, Pierre-Emma\-nuel Caprace, Jingyin Huang, St\'ephane Lamy, Timoth\'ee
Marquis, Fr\'ed\'eric Paulin, and Eric Swenson for introducing us to the state of affairs in the subject, for their comments and encouragement. We
thank the referees for corrections and remarks. This paper was written while D.O.\ was visiting McGill University. We would like to thank the
Department of Mathematics and Statistics of McGill University for its hospitality during that stay.

\section{Rational angles}
\label{sec:spaces}

{\koll Let $v$ be a vertex of a triangle complex $X$.
Let $\lk_v$ be the graph that is the \emph{link} of~$v$, as defined in \cite[page~176]{BB}. Namely, vertices of $\lk_v$ correspond to edges of~$X$ containing $v$ and edges of $\lk_v$ correspond to triangles of $X$ containing $v$.}
We treat $\lk_v$ as a length metric space $(\lk_v,d_v)$ where the length of each edge is the angle in an appropriate triangle of $X$. Since we assumed that triangles containing~$v$ belong to only finitely many isometry classes of $\sigma_T$, there are only finitely many possible edge lengths in a given $\lk_v$, and so $\lk_v$ is complete.

For a piecewise smooth simplicial map $\phi \colon X\to X'$ we keep the notation $\phi$ for all the maps $\lk_v\to \lk_{\phi(v)}$ induced by $\phi$.

The \emph{space of directions} $S_v$ at $v$ is the set of geodesics issuing from $v$, where we identify geodesics at Alexandrov angle~$0$ (see
\cite[II.3.18]{BH}). The Alexandrov angle equips $S_v$ with a metric.

Suppose that $X$ is $\mathrm{CAT}(0)$. Consider the map $i \colon \lk_v\setminus \lk_v^0\to S_v$ mapping each point in the interior of an edge of
$\lk_v$ to the class of the appropriate geodesic in the corresponding triangle of $X$ containing~$v$ (which remains a geodesic in $X$ since $X$ is
$\mathrm{CAT}(0)$). Note that we cannot similarly define~$i$ on the vertex set $\lk_v^0$, since for example for $X=T$ a single triangle with
$(T,\sigma_T)$ isometric to a sector of a Euclidean disc, and $v\neq w$ vertices of $T$ distinct from the centre of the disc, there is no geodesic
issuing from $v$ tangent to the edge $vw$ of $T$.

\begin{lem}
\label{lem:link} Let $v$ be a vertex of a $\mathrm{CAT}(0)$ triangle complex $X$. Suppose that each edge of~$X$ containing~$v$ is contained in a
triangle of $X$. Then $i$ extends to an isometry from~$\lk_v$ considered with the metric that is the minimum of $d_v$ and $\pi$ to the
completion~$\overline{S}_v$ of~$S_v$.
\end{lem}

\begin{proof}
We first prove that the image of $i$ is dense in~$S_v$. Indeed, after possibly subdividing $X$, we can assume that in each triangle $T$ containing
$v$, the geodesic~$\gamma_T$ starting at~$v$ and bisecting the angle of $T$ at $v$ ends at the opposite side of $T$. Since~$X$ is uniquely geodesic,
a geodesic $vx$ cannot intersect $\gamma_T$ transversely. Thus if~$vx$ is contained in the union $\mathrm{St}(v)$ of all the closed triangles
containing $v$, then $vx$ cannot intersect outside $v$ two distinct edges containing $v$. Consequently, we have at least one of the following
situations. If $vx$ contains a geodesic $vy$ contained in a triangle of $X$, then the class of $vx$ in $S_v$ lies in the closure of the image under
$i$ of the corresponding open edge of~$\lk_v$. If each $vy\subset vx$ intersects infinitely many times the same edge $e$ containing~$v$, then the
class of $vx$ in $S_v$ lies in the closure of the image under~$i$ of any open edge of $\lk_v$ corresponding to a triangle containing $e$.

Since $d_v$ is a complete metric on $\lk_v$, the minimum of $d_v$ and $\pi$ is also complete. Thus to prove the lemma it remains to justify that $i$
is an isometric embedding. This is easy in the special case where all the edges of $X$ containing $v$ are geodesics, and we reduce the general case
to that special case in the following way. It is easy to see that $i$ is 1-Lipschitz. Conversely, let $\xi,\eta\in \lk_v$ be at distance $>\eps>0$
from~$\lk_v^0$, with $2\eps$ smaller than the length of the shortest edge of $\lk_v$. After possibly subdividing $X$, we can assume that in each
triangle $T$ containing $v$, the geodesics $\gamma^e_T,\gamma^f_T$ starting at $v$ at angle $\eps$ from the edges $e,f$ of $T$ end at the opposite
side of~$T$, and that for given $e$, various $\gamma^e_T$ have the same length. Let $\mathrm{St}_\eps(v)$ be the triangle complex obtained from
$\mathrm{St}(v)$ by replacing each triangle $T$ by the triangle $T_\eps\subset T$ bounded by $\gamma^e_T\cup \gamma^f_T$ and an arc in the side of
$T$ opposite to $v$, and by identifying all $\gamma^e_T$ for given~$e$. Let $p_\eps\colon \mathrm{St}(v)\to\mathrm{St}_\eps(v)$ be the $1$-Lipschitz
map whose restriction to each~$T_\eps$ is the identity, and which maps each point $x$ in the component of $\mathrm{St}(v)\setminus \bigcup T_\eps$
containing~$e\setminus v$ to the point on $\gamma^e_T$ at distance $\min\{d(x,v),|\gamma^e_T|\}$ from~$v$. Let $\xi'$ and~$\eta'$ be geodesics in
$\mathrm{St}(v)$ that represent $i(\xi)$ and $i(\eta)$. It is clear that the Alexandrov angle between $p_\eps(\xi')$ and $p_\eps(\eta')$ does not
exceed the Alexandrov angle between $\xi'$ and $\eta'$. Furthermore, by the special case above applied to $\mathrm{St}_\eps(v)$, the former
Alexandrov angle converges to $\min\{d_v(\xi,\eta),\pi\}$ as $\eps\to 0$. This justifies that $i$ is an isometric embedding.
\end{proof}

For $X$ to be $\mathrm{CAT}(0)$ it is necessary that
\begin{enumerate}[(a)]
\item the girth of each $\lk_v$ is $\geq 2\pi$,
\item the Gaussian curvature of $\sigma_T$ at any interior point of $T$ is $\leq 0$,
\item the sum of geodesic curvatures in any two distinct triangles at any interior point of a common edge is $\leq 0$, and
\item $X$ is simply connected.
\end{enumerate}

Note that condition (c) can be justified exactly as in the proof of \cite[Thm~7.1]{Babu}. Condition (a) follows from Lemma~\ref{lem:link}, since we
can assume without loss of generality that each edge of $X$ containing~$v$ is contained in a triangle of $X$, and $\overline{S}_v$ is
$\mathrm{CAT}(1)$ by \cite[II.3.19]{BH}.

While it seems that (a)--(d) are also sufficient conditions for $X$ to be $\mathrm{CAT}(0)$ (one would need to couple the ideas from \cite{Babu} and
\cite{BH}), only under the following assumptions this has been verified in the literature.

\begin{itemize}
\item $X$ is locally finite \cite[Thm 7.1]{Babu}, or
\item $X$ is an $M_\kappa$ complex with $\kappa\leq 0$ and there are only finitely many isometry classes of $\sigma_T$ \cite[II.5.2 and~II.5.4]{BH}, or
\item $X$ is the space $\bf{X}$ for the tame automorphism group $\mathrm{Tame}(\mathbf k^3)$ \cite[Thm~A]{LP}.
\end{itemize}

The following lemma guarantees the existence of a sensible \emph{barycentric subdivision} for triangle complexes.

\begin{lem}
\label{lem:bar}
Let $X$ be a triangle complex. Then we can equip the barycentric subdivision $X'$ of $X$ with a piecewise smooth Riemannian metric so that we have an isometry $X\to X'$ under which each automorphism of $X$ is mapped to an automorphism of $X'$.
\end{lem}
\begin{proof}
To obtain $X'$, we subdivide each edge of $X$ into two edges of equal length. Now, let $(T,\sigma_T)$ be an isometry class of a triangle of $X$. Let $\mathrm{Aut}(T)$ be the automorphism group of $T$. Our goal is to pick an interior point $m$ of $T$ and a collection of six disjoint smooth arcs in the interior of $T$ joining $m$ with the vertices and edge midpoints of $T$ that is invariant under $\mathrm{Aut}(T)$.

If $\mathrm{Aut}(T)$ is trivial, then we can choose $m$ and the arcs arbitrarily. If $\mathrm{Aut}(T)$ contains only one nontrivial element $f$ that fixes a vertex $v$ of $T$, then $\mathrm{Fix}(f)$ is a smooth arc from $v$ to the midpoint of the opposite edge. We then pick $m\in \mathrm{Fix}(f)$, two of the arcs of our collection inside $\mathrm{Fix}(f)$, two others in one component of $T-\mathrm{Fix}(f)$, and two last ones in the other component, as the images under $f$ of the preceding ones. If $\mathrm{Aut}(T)$ is the dihedral group, with involutions $f,g,h,$ then we choose~$m$ to be the unique intersection point of $\mathrm{Fix}(f),\mathrm{Fix}(g), \mathrm{Fix}(h)$, and choose the arcs inside these fixed point sets. Finally, if $\mathrm{Aut}(T)=\langle f\rangle$ is cyclic of order $3$, then let $m$ be the unique fixed point of $f$. Inside the quotient orbifold $T/\mathrm{Aut}(T)$ choose two disjoint smooth arcs joining $m$ to the unique vertex and edge midpoint, and lift them to $T$.
\end{proof}

\begin{defin}
\label{def:rational}
Let $v$ be a vertex of $X$. An immersed edge-path $\gamma$ in $\lk_v$ is \emph{flat} if
\begin{enumerate}[(i)]
\item
each edge of $\gamma$ corresponds to a triangle $T$ of $X$ with Gaussian curvature $0$ at any point of $T$, and
\item
the sum of geodesic curvatures in any two such consecutive triangles $T,T'$ at any point of $T\cap T'$ is $0.$
\end{enumerate}
We say that $X$ has \emph{rational angles} with respect to an action of a group $G$, if for each vertex $v$ of $X$ we have a discrete set $\Lambda\subset \lk_v$ that is
\begin{itemize}
\item
invariant under the stabiliser~$G_v$, and such that
\item
{\kol for any subdivision $X'$ of $X$ inducing a subdivision $\lk'_v$ of $\lk_v$ with $\Lambda$ contained in the vertex set of $\lk'_v$,
each flat immersed edge-path $\gamma$ in $\lk'_v$} that is disjoint from $\Lambda$, except possibly at the endpoints, is
\begin{itemize}
\item
finite, and
\item
if it has endpoints in $\Lambda$, then its length is commensurable with $\pi$.
\end{itemize}
\end{itemize}
\end{defin}

For example, if all triangles of $X$ have only angles commensurable with $\pi$, then $X$ has rational angles {\kol with respect to any automorphism group of $X$}, since we can take for $\Lambda$ all the vertices of $\lk_v$. This includes all $\widetilde{A}_2, \widetilde{C}_2$, and $\widetilde{G}_2$ buildings.

{\kol Note that if a triangle complex $X$ has rational angles with respect to an action of a group $G$, then $X$ has rational angles with respect to the action of any subgroup of~$G$.}

\begin{lem}
\label{lem:barycentric_preserves_rational}
Let $X$ be a triangle complex that has rational angles with respect to an action of a group $G$. Then its barycentric subdivision $X'$ has also rational angles with respect to $G$.
\end{lem}
\begin{proof}
Let $v$ be a vertex of $X'$. We will denote by $\lk'_v$ the {\koll link of}  $v$ in $X'$. Our goal is to find $\Lambda'\subset \lk'_v$ satisfying Definition~\ref{def:rational}.

If $v$ is the barycentre of a triangle $T$ of $X$, then $\lk'_v$ is a cycle of combinatorial length~$6$ and angular length~$2\pi$. Let $\lambda$ be a vertex of $\lk'_v$. In the case where $\mathrm{Aut}(T)$ has order $2$, we additionally require that $\lambda$ is fixed by $\mathrm{Aut}(T)$. Then we can take $\Lambda'=\mathrm{Aut}(T)\lambda$. If $v$ is the midpoint of an edge $e$ of {\kol $X$}, then we can take $\Lambda'$ to consist of the two vertices of $\lk'_v$ corresponding to $e$.

Finally, assume that $v$ is also a vertex of $X$. Then $\lk'_v$ is a subdivision of $\lk_v,$ {\koll the link of $v$ in $X$}. Let $\Lambda\subset \lk_v$ be the set satisfying Definition~\ref{def:rational}. Then {\kol it suffices to take} $\Lambda'=\Lambda$ under the identification of $\lk'_v$ with $\lk_v$.
\end{proof}

\begin{lem}
\label{lem:Lamy}
The space $\bf X$ for $\mathrm{Tame}(\mathbf k^3)$ constructed in \cite{LP} has rational angles.
\end{lem}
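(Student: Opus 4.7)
The plan is to appeal to the explicit construction of $\bf X$ in \cite{LP} prior to subdivision. There $\bf X$ carries a polygonal cell structure whose 2-cells fall into finitely many isometry classes under the $G$-action; I will assume (from \cite{LP}) that these 2-cells split into Euclidean polygons, each with constant zero Gaussian curvature and interior angles commensurable with $\pi$, and 2-cells whose Gaussian curvature is somewhere strictly negative. The subdivision turning $\bf X$ into a triangle complex is chosen $G$-equivariantly, which is possible because the $G$-action already preserves the polygonal structure. For each vertex $v$ of $\bf X$, define $\Lambda \subset \lk_v$ to be the set of directions at $v$ along edges of the unsubdivided $\bf X$. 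This set is $G_v$-invariant because $G_v$ permutes the edges of $\bf X$ at $v$, and is discrete because the angular gap at $v$ between distinct edges of $\bf X$ is bounded below by the minimum interior angle appearing in \cite{LP}.

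Next I would analyse an arbitrary flat immersed edge-path $\gamma \subset \lk_v$ disjoint from $\Lambda$ except possibly at its endpoints. By clause (i) of Definition~\ref{def:rational}, each edge of $\gamma$ corresponds to a subdivision triangle of zero Gaussian curvature, hence one that lies inside a Euclidean 2-cell of $\bf X$. Each interior vertex of $\gamma$, being outside $\Lambda$, corresponds to a diagonal of the subdivision lying strictly inside a single Euclidean 2-cell $P$; so the two subdivision triangles of $\gamma$ meeting there both lie in $P$. Inducting along $\gamma$, all edges of $\gamma$ are supported in one and the same Euclidean 2-cell $P$, whence $\gamma$ is finite (the subdivision of $P$ having finitely many triangles). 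If in addition the endpoints of $\gamma$ lie in $\Lambda$, they are directions along edges of $\bf X$ at $v$ lying on $\partial P$; since $v$ is incident to exactly two such edges of $\partial P$, the length of $\gamma$ equals the full interior angle of $P$ at $v$, and so is commensurable with $\pi$ by construction.

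The main obstacle in turning this sketch into a proof is a careful reading of \cite{LP} to confirm that (a) each 2-cell of $\bf X$ is either strictly negatively curved somewhere or else Euclidean with all interior angles in $\pi\Q$, (b) a $G$-equivariant subdivision into a triangle complex exists, and (c) edges of $\bf X$ at each vertex are angularly separated so that $\Lambda$ is discrete. These are essentially bookkeeping verifications about the construction in \cite{LP} rather than novel arguments; once they are in place, the confinement argument above supplies the rational angles property with $\Lambda$ as defined.
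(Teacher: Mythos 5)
Your approach is genuinely different from the paper's, and the crux is precisely the item you flag as ``bookkeeping verification (a)'' — but this is not bookkeeping; it is the whole content of the lemma, and the paper's actual proof strongly suggests it does not hold in the simple form you assume. The paper does not work with the polygonal cell structure of $\bf X$ at all. Instead it uses the folding map $\rho_+\colon \bf{X}\to\nabla_+$ of \cite[Cor~2.4]{LP}, takes $P\subset\lk_{\rho_+(v)}$ to be the directions of the three \emph{principal lines}, and sets $\Lambda=\rho_+^{-1}(P)$. The key facts it invokes are that all edges of $\bf X$ lie in \emph{admissible} lines, that among admissible lines only the principal ones are geodesics (the equality case of \cite[Lem~5.1]{LP}), and that consecutive points of $P$ are $\tfrac{\pi}{3}$ apart. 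From these it deduces that $\rho_+\circ\gamma$ is immersed in $\lk_{\rho_+(v)}$ away from $P$, hence that every flat $\gamma$ has length at most $\tfrac{\pi}{3}$, with equality exactly when the endpoints lie in $\Lambda$.

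Two concrete problems with your proposal. First, the paper's $\Lambda$ is \emph{not} the set of edge-directions of $\bf X$ at $v$: a typical edge of $\bf X$ lies in a non-principal admissible line and is therefore \emph{not} in $\rho_+^{-1}(P)$, while some directions interior to a $2$-cell may well lie over $P$. The fact that the authors go to the trouble of defining $\Lambda$ this way, rather than simply taking edge-directions, is strong evidence that your choice of $\Lambda$ does not satisfy the requirements of Definition~\ref{def:rational}. Second, and related, if your assumption (a) held — Euclidean $2$-cells with all interior angles in $\pi\mathbb{Q}$ — then the remark immediately following Definition~\ref{def:rational} (take $\Lambda$ to be all vertices of $\lk_v$, after a rational-angle subdivision) would dispose of $\bf X$ without any need for a separate lemma. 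That the paper instead invokes the machinery of admissible and principal lines from \cite[\S\S4--5]{LP}, and lands on the specific constant $\tfrac{\pi}{3}$ coming from the spacing of principal directions rather than from cell angles, indicates that the cell angles of $\bf X$ are \emph{not} uniformly commensurable with $\pi$. So the gap is not a reading chore to confirm (a): it is that (a) is most likely false, and your confinement argument, which delivers ``length equals the interior angle of the $2$-cell $P$ at $v$,'' would then produce lengths that are not commensurable with $\pi$. You would need to replace the edge-direction $\Lambda$ with something like $\rho_+^{-1}(P)$ and argue via the folding, at which point you have rederived the paper's proof.
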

\begin{proof}
The proof is aimed at readers familiar with \cite{LP}. Let $\rho_+\colon \bf{X}\to \nabla_+$ be the folding from \cite[Cor 2.5]{LP}.
Let $P\subset \lk_{\rho_+(v)}$ be the directions determined by the \emph{principal lines}: through $\rho_+(v)$ and $[1,0,0],[0,1,0],$ or $[0,0,1]$. Let $\Lambda=\rho_+^{-1}(P)$. These are the only possible directions of edges from $\rho_+(v)$ that are geodesics, since all edges lie in admissible lines (see \cite[\S 4.A]{LP}) and the only admissible lines that are geodesics are principal (equality case in \cite[Lem~5.1]{LP}). Let $\gamma$ be a flat immersed edge-path in {\kol a subdivision of} $\lk_v$ that is disjoint from $\Lambda$, except possibly at the endpoints. Thus by Definition~\ref{def:rational}(ii), the image of $\gamma$ under $\rho_+$ in $\lk_{\rho_+(v)}$ might not be immersed only at $P$. Since consecutive points of~$P$ are at distance~$\frac{\pi}{3}$ (see \cite[Rm~5.2]{LP}), the length of~$\gamma$ is bounded by $\frac{\pi}{3}$ with equality if and only if $\gamma$ has endpoints in~$\Lambda$.
\end{proof}

Lemma~\ref{lem:Lamy} and Theorem~\ref{thm:torsion}(iii) have the following immediate consequence, which is the first step towards the Tits Alternative for $\mathrm{Tame}(\mathbf k^3)$.

\begin{cor} Let $G$ be a finitely generated subgroup of $\mathrm{Tame}(\mathbf k^3)$ acting on $\mathbf{X}$ without a global fixed point. Then $G$ has an element with no fixed point in $\mathbf{X}$.
\end{cor}

\section{Pairs of generators}
\label{sec:2}

In this section we prove Theorem~\ref{thm:torsion} for a 2-generated group $G$.
For an automorphism~$f$ of a simplicial complex $X$, $\mathrm{Fix}(f)\subset X$ denotes the set of fixed points of $f$.

\begin{prop}
\label{prop:pairs}
Let $f,g$ be automorphisms of a $\mathrm{CAT}(0)$ triangle complex $X$ satisfying condition (i),(ii), or (iii) of Theorem~\ref{thm:torsion}, {\kol with respect to $\langle f,g\rangle$}. Suppose that both $\mathrm{Fix}(f), \mathrm{Fix}(g)$ are nonempty. Then $\mathrm{Fix}(f)$ intersects $\mathrm{Fix}(g)$, or
$\langle f,g\rangle$ contains an element with no fixed point in $X$.
\end{prop}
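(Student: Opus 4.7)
Suppose $F := \mathrm{Fix}(f)$ and $F' := \mathrm{Fix}(g)$ are disjoint; the goal is to produce an element of $\langle f, g\rangle$ acting on $X$ loxodromically, hence without fixed point. Both $F$ and $F'$ are closed convex subcomplexes of $X$. Because $d(F, F')$ need not be realised in $X$ and $f, g$ need not act as reflections across $F$ and $F'$, the classical ``bridge between convex sets'' argument is unavailable, so we follow the surface-theoretic strategy outlined in the paper's introduction.

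The first step is to encode the disjointness of $F$ and $F'$ by a minimal combinatorial surface. Concretely, I would construct a $2$-dimensional simplicial annulus (or disc) $\Delta$ with boundary components $P$ and $Q$ together with a simplicial map $\psi \colon \Delta \to X$ sending $P$ into $F$ and $Q$ into $F'$. Such pairs $(\Delta, \psi)$ exist via a van Kampen-type filling starting from an auxiliary loop in $X$ that alternates between $F$ and $F'$, which is possible by simple connectedness of $X$. Among all such $(\Delta, \psi)$ we minimise the number of triangles of $\Delta$; the pulled back piecewise smooth Riemannian metric turns $\Delta$ into a piecewise smooth locally $\mathrm{CAT}(0)$ surface, flat away from a finite singular vertex set $\Sigma \subset \Delta$ at which the inherited cone angles are $\geq 2\pi$. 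Within $\Delta$ we then look for a periodic billiard trajectory $\omega$ bouncing between $P$ and $Q$, disjoint from $\Sigma$ whenever possible.

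The existence of $\omega$ is the heart of the argument and branches according to the hypothesis in force. Under (iii), all cone angles of $\Delta$ pulled back from $X$ are commensurable with $\pi$, so Masur's theorem on periodic trajectories in rational billiards supplies $\omega$ directly. Under (i) or (ii), we follow the Ballmann--Brin approach to closed geodesics in $2$-dimensional locally $\mathrm{CAT}(0)$ surfaces, applied to the reflection-unfolded cover of $\Delta$; Theorem~\ref{thm:graph+} applied to the link of each $v \in \Sigma$ (of girth $\geq 2\pi$ inherited from the $\mathrm{CAT}(0)$ property of $X$) provides the commensurability information needed to force the candidate closed geodesic to close up.

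Finally, reflection-unfolding $\omega$ through the walls $P$ and $Q$, with each bounce replaced by the corresponding automorphism $f$ or $g$, and developing via $\psi$ produces a bi-infinite simplicial local geodesic $\tilde\gamma \subset X$ on which the word $h \in \langle f, g\rangle$ read off from the reflection pattern of $\omega$ acts by positive translation. Thus $h$ is loxodromic, yielding the required element with no fixed point. The principal obstacle I anticipate lies in the previous step: when $\omega$ is forced to pass through a vertex $v \in \Sigma$, one must verify that $\psi(\omega)$ remains locally geodesic at $\psi(v)$, which requires comparing $\lk_v$ in $\Delta$ with $\lk_{\psi(v)}$ in $X$. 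This fine link analysis is where Theorem~\ref{thm:graph+} intervenes decisively under (i) and (ii), and where each of the three hypotheses plays its distinct role.
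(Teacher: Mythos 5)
Your high-level strategy (minimal-area filling surface, periodic billiard/geodesic, develop to an axis) matches the paper, but the core construction is wrong in a way that makes the argument unworkable as stated.

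The paper does not build an annulus or a disc between $\mathrm{Fix}(f)$ and $\mathrm{Fix}(g)$. The essential algebraic trick you are missing is to introduce $h := g^{-1}\circ f$: if $\mathrm{Fix}(h)=\emptyset$ one is already done, so one may assume $\mathrm{Fix}(h)\neq\emptyset$, and then $\mathrm{Fix}(f),\mathrm{Fix}(g),\mathrm{Fix}(h)$ are \emph{pairwise disjoint}. The correct combinatorial object is then a sphere $S$ with \emph{three} marked points $p,q,r$, assembled from a disc $D$ with boundary loop $\alpha^{-1}\beta\,g(\beta^{-1})\,f(\alpha)$ (this loop closes precisely because $a\in\mathrm{Fix}(f)$, $b\in\mathrm{Fix}(g)$, and $c\in\mathrm{Fix}(h)$ force $f(c)=g(c)$). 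The object carrying the billiard dynamics is the universal branched cover $\widetilde S$ of $S$ over $\{p,q,r\}$, whose deck group is the free group $\langle f_S, g_S\rangle$, together with an \emph{equivariant} near-immersion $\phi\colon\widetilde S\to X$ intertwining $f_S\mapsto f$, $g_S\mapsto g$. A closed geodesic in $S$ lifts to an axis of some $w\in\langle f_S,g_S\rangle$ in $\widetilde S$, and equivariance plus the near-immersion property make $\phi$ of that axis a geodesic axis for $\phi_*(w)\in\langle f,g\rangle$. This equivariance is what turns the periodic trajectory into a loxodromic element.

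By contrast, your annulus/disc between $\mathrm{Fix}(f)$ and $\mathrm{Fix}(g)$ carries no group action: $\pi_1$ of a disc is trivial, $\pi_1$ of an annulus is $\mathbb Z$ and has no natural map to $\langle f,g\rangle$ with the needed properties, and the ``reflection-unfolded cover'' you invoke is not defined — $f$ and $g$ are not reflections, and $\psi(P),\psi(Q)$ are merely contained in (not equal to) the fixed sets, so there is no sense in which applying $f$ at a bounce on $P$ reflects the developed trajectory. Without the third marked point there is no way to read off a word $h\in\langle f,g\rangle$ from a closed billiard path, which is precisely the mechanism that produces the loxodromic. This is a genuine gap, not a stylistic difference.

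Two smaller inaccuracies: the rational-angles hypothesis (iii) does not say the pulled-back cone angles are commensurable with $\pi$; in the paper it is used only to derive a contradiction in the subcase where $\lk_{\widetilde v}$ is a line. And Corollary~\ref{thm:graph} is not used to ``force the geodesic to close up''; it produces two directions at a link at distance $>\pi$, which then feed into a Ballmann--Brin quasi-geodesic construction. Also bear in mind the preliminary subdivision so that $f,g,h$ act without inversions, needed to make the fixed sets subcomplexes and the filling by relative simplicial approximation possible.
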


In the proof we will need the following terminology. We say that an automorphism $f$ of a simplicial complex $X$ acts \emph{without inversions}, if whenever $f$ stabilises a simplex of $X$, then it fixes it pointwise.

\begin{defin}
\label{def:equivariant}
Let $S$ be a two-sphere with three marked points $p,q,r$, and a basepoint $z$ distinct from $p,q,r$. Choose $f^{-1}_S,g_S$ and $h_S=g^{-1}_S\circ f_S$ to be elements of the fundamental group $\pi_1(S-\{p,q,r\},z)$, represented by embedded closed paths circling counterclockwise the points $p,q,r,$ respectively, as in Figure~\ref{fig:sphere2}. Let $\widetilde S$ be the branched cover of $S$ over $p,q,r$ corresponding to the universal cover of $S-\{p,q,r\}$. For a pair $(f,g)$ of automorphisms of a $2$-dimensional simplicial complex $X$, an \emph{equivariant triangulation} of $\widetilde {S}$ \emph{w.r.t.\ $(f,g)$} is the following object.

First of all, it consists of a structure on $S$ of a \emph{$\Delta$-complex} (as defined in \cite[\S 2.1]{H}), which is a generalisation of a simplicial complex allowing the simplices not to be embedded, and allowing several simplices with the same boundary. We require that $p,q,$ and $r$ are vertices. We pull back the $\Delta$-complex structure on $S$ to a $\Delta$-complex structure on $\widetilde S$.

Secondly, an equivariant triangulation consists of a simplicial map $\phi$ from $\widetilde S$ to~$X$ that is equivariant with respect to the homomorphism from $\pi_1(S-\{p,q,r\},z)$ to $\langle f, g\rangle$ defined by $f_S\to f, g_S\to g$, which we will denote by $\phi_*$.
\end{defin}

\begin{figure}[h!]
	\begin{center}
		\includegraphics[scale=0.42]{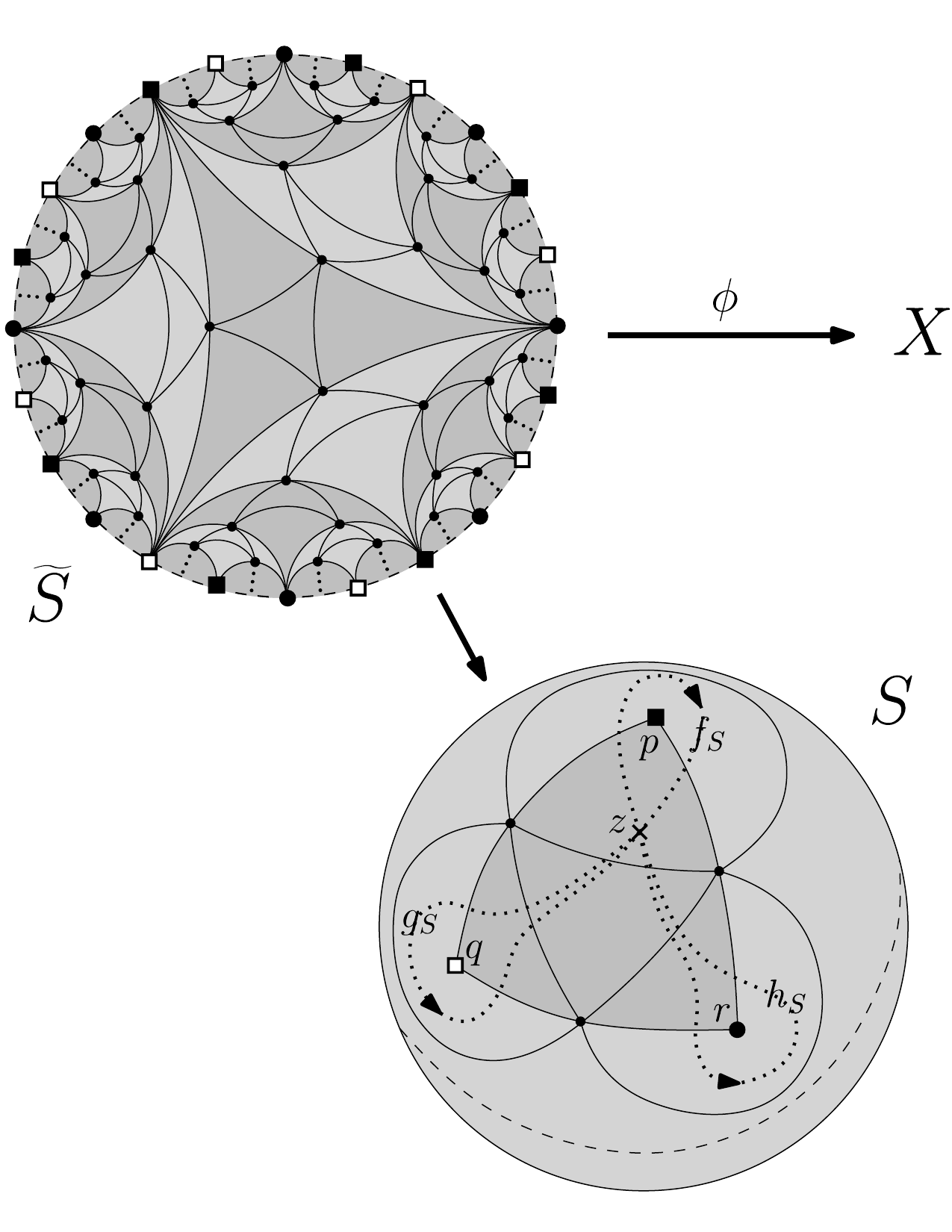}
	\end{center}
	\caption{An equivariant triangulation.}\label{fig:sphere2}
\end{figure}

A simplicial map is \emph{nondegenerate} if it does not collapse an edge to a vertex. A nondegenerate simplicial map is a \emph{near-immersion} if it is a local embedding at each edge midpoint of the source complex.

We begin with listing a lemma and a proposition that piece together to a proof of Proposition~\ref{prop:pairs}.

\begin{lem}
\label{lem:min surface}
Let $f,g,h=g^{-1}\circ f$ be automorphisms of a simply connected $2$-dimensional simplicial complex~$X$, {\kol with $\langle f,g\rangle$} acting without inversions. Suppose that all $\mathrm{Fix}(f)$, $\mathrm{Fix}(g),\mathrm{Fix}(h)$ are nonempty and pairwise disjoint. Then there exists an equivariant triangulation of $\widetilde S$ w.r.t.\ $(f,g)$ with $\phi$ a near-immersion.
\end{lem}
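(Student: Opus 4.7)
Plan: The proof naturally splits into two parts: first, constructing \emph{some} equivariant triangulation of $\widetilde S$ using simple connectivity of $X$; second, taking one of minimal complexity and arguing that minimality forces the near-immersion property.

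For existence, since $f,g,h$ act without inversions, each of $\mathrm{Fix}(f)$, $\mathrm{Fix}(g)$, $\mathrm{Fix}(h)$ is a subcomplex of $X$, so I can pick vertices $x\in\mathrm{Fix}(f)$, $y\in\mathrm{Fix}(g)$, $z\in\mathrm{Fix}(h)$; these are pairwise distinct by hypothesis. Connect them by edge-paths $\alpha_f$ (from $x$ to $y$), $\alpha_g$ (from $y$ to $z$), and $\alpha_h$ (from $z$ to $x$) in the $1$-skeleton of $X$. Their concatenation is a null-homotopic loop, so a standard van Kampen argument yields a simplicial disc $D$ with a simplicial map $\psi\colon D\to X$ whose boundary realizes this loop, the three corners of $\partial D$ mapping to $x,y,z$. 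Take $S$ to be the sphere obtained by doubling $D$ along $\partial D$, with the marked points $p,q,r$ placed at these corners; this gives a $\Delta$-complex structure on $S$ with $p,q,r$ as vertices. The map $\psi$ provides a simplicial map on a fundamental domain of $\widetilde S\to S$, and since $\phi_*(f_S)=f$ fixes $\psi(p)=x$ (and similarly at $q$ and $r$), this extends equivariantly to the desired $\phi\colon \widetilde S\to X$.

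Among all equivariant triangulations, I would then take one minimizing the number of $2$-cells in $S$. Suppose for contradiction that the resulting $\phi$ is not a near-immersion. If $\phi$ collapses some edge $\widetilde e$, then by equivariance the corresponding edge $e\subset S$ can be contracted, producing a new $\Delta$-complex structure on the sphere with strictly fewer $2$-cells carrying a compatible equivariant map — contradicting minimality. If instead $\phi$ fails to be a local embedding at the midpoint of some edge $\widetilde e$, then the two $2$-cells adjacent to $\widetilde e$ have the same image triangle in $X$ and agree on $\widetilde e$; a folding move along the projection of $\widetilde e$ reduces the $2$-cell count while preserving the sphere topology, the marked points $p,q,r$, and equivariance, again contradicting minimality.

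The main obstacle is executing the collapse and fold operations cleanly: one must check that the result is still a genuine $\Delta$-complex structure on a $2$-sphere, that $p,q,r$ remain distinct vertices of it, and that the modified map on $\widetilde S$ remains simplicial and equivariant under $\phi_*$. This is where the no-inversion hypothesis plays its role, since it guarantees that the fixed sets contain vertices and makes the vertex-based surgery well-defined throughout the reduction. Once both reduction moves are verified, minimality of the chosen triangulation yields the required near-immersion.
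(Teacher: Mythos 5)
Your overall plan (build some equivariant triangulation, then take a minimal-area one and argue minimality forces a near-immersion) matches the paper's strategy, but the construction in your existence step contains a genuine error that invalidates Step~1.

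The deck group of the branched cover $\widetilde S\to S$ is $\pi_1(S-\{p,q,r\})\cong F_2=\langle f_S,g_S\rangle$, and a fundamental domain for this action is \emph{not} one copy of your triangle $D$ but the union $D\cup D'$ of the two halves of the doubled sphere. Your claim that ``$\psi$ provides a simplicial map on a fundamental domain of $\widetilde S\to S$'' is therefore false: you have only prescribed $\phi$ on half the fundamental domain, and equivariance does not determine it on the other half (the $D'$-tiles lie in a different $F_2$-orbit from the $D$-tiles). If you try to patch this by declaring $\phi$ on $D'$ to be the same map $\psi$ under the canonical identification $D'\cong D$, the extension is not well-defined: walking around a lift $\widetilde p$ of $p$, a lift $\widetilde D'$ of $D'$ shares one edge with $\widetilde D$ and its other $\widetilde p$-edge with $f_S^{\pm 1}\widetilde D$; matching $\phi$ across that second edge forces $\psi(\alpha_h)=f^{\pm 1}\circ\psi(\alpha_h)$, i.e.\ $\psi(\alpha_h)\subset\mathrm{Fix}(f)$, which fails since $\alpha_h$ is an arbitrary path from $z\in\mathrm{Fix}(h)$ to $x\in\mathrm{Fix}(f)$ and the fixed-point sets are disjoint. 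The paper avoids this by filling the loop $\alpha^{-1}\beta\,g(\beta^{-1})\,f(\alpha)$ (where $\alpha,\beta$ run from $c\in\mathrm{Fix}(h)$ to $a\in\mathrm{Fix}(f)$ and $b\in\mathrm{Fix}(g)$ respectively); the four boundary arcs already carry the correct $f$- and $g$-translation data, so after gluing $\alpha$ to $f(\alpha)$ and $\beta$ to $g(\beta)$ the single quadrilateral disc \emph{is} a fundamental domain and the equivariant extension is immediate. Your $D$ is half of that quadrilateral, which is why your boundary data does not determine an equivariant map.

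Your minimality step is also too sketchy (``contracting'' a loop edge makes no sense; the removal of two triangles via foliation collapse, the M\"obius-band check that the two remaining edges are distinct, and the check that $p,q,r$ do not get identified all need to be addressed, and the no-inversion hypothesis is used not just to get vertices in the fixed sets but to exclude the case $T=T'$ in the near-immersion step). You flag these, so that is a lesser concern; the existence step is the substantive gap.
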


Lemma~\ref{lem:min surface} will be proved in Section~\ref{sec:surface}.

\begin{prop}
	\label{prop:loxodromic}
	Let $f,g$ be automorphisms of a $\mathrm{CAT}(0)$ triangle complex $X$ satisfying condition (i),(ii), or (iii) of Theorem~\ref{thm:torsion}, {\kol with respect to $\langle f,g\rangle$}. Suppose that we have an equivariant triangulation of $\widetilde S$ w.r.t.\ $(f,g)$ with $\phi$ a near-immersion. Then $\langle f,g\rangle$ contains {\kol an element with no fixed point in $X$.}
\end{prop}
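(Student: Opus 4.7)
The plan is to pull the piecewise smooth Riemannian structure of $X$ back along $\phi$ to $\widetilde{S}$, obtaining a triangle complex on which the free group $\langle f_S, g_S\rangle \cong \pi_1(S-\{p,q,r\})$ acts by deck transformations and with respect to which $\phi$ is equivariant via $\phi_*$. Because $\phi$ preserves each triangle metric $\sigma_T$ and is a near-immersion into a CAT(0) space, the pulled-back metric on $\widetilde{S}$ is locally CAT(0) on the interior of every edge and $2$-cell; the three branch points $\tilde p, \tilde q, \tilde r$ acquire infinite cone angle, and at each non-branch vertex $v$ the link $\lk_v$ maps into $\lk_{\phi(v)}(X)$, inheriting girth $\geq 2\pi$.

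The key step is to produce a closed geodesic $\omega \subset \widetilde{S}$ that avoids the branch points and is not peripheral, i.e.\ whose invariant deck transformation $\alpha \in \langle f_S, g_S\rangle$ is not conjugate to a power of $f_S, g_S,$ or $h_S$. Under condition (iii), the flat pieces of $\widetilde{S}$, cut along the discrete sets $\Lambda$ from Definition~\ref{def:rational}, carry the structure of a rational billiard with all relevant angles commensurable with $\pi$, so Masur's theorem on periodic trajectories yields such an $\omega$ inside the interiors of $2$-cells. Under conditions (i) or (ii), I would instead adapt Ballmann--Brin's construction of closed geodesics in locally CAT(0) $2$-complexes: fix a non-peripheral free homotopy class in $S-\{p,q,r\}$, take a sequence of loops with lengths converging to the infimum, and extract a limit. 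The limit may pass through a non-branch vertex $v$; here Corollary~\ref{thm:graph}, applied to the image in $\lk_{\phi(v)}(X)$ of any return loop of $\omega$ through $\lk_v$, forces that loop to have length commensurable with $\pi$, providing enough structural control to either perturb $\omega$ off $v$ or certify that it is a genuine closed geodesic.

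Once $\omega$ is in hand, the composition $\phi\circ\omega$ is a locally geodesic periodic curve in the CAT(0) space $X$, and so develops to a bi-infinite geodesic line $\tilde\omega \subset X$. Equivariance of $\phi$ then shows that $\phi_*(\alpha) \in \langle f, g\rangle$ preserves $\tilde\omega$ and acts on it as a translation by the length of $\omega$, making $\phi_*(\alpha)$ the required loxodromic element. The main obstacle is this middle step under conditions (i) and (ii): constructing $\omega$ and controlling its passage through non-branch vertices of $\widetilde{S}$, where the graph-theoretic input of Corollary~\ref{thm:graph} appears essential for ruling out the pathological case in which the return directions at $v$ accumulate in a non-rational fashion.
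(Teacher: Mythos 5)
Your high-level plan — pull back the metric along $\phi$, produce a closed local geodesic $\omega$ in the branched cover, and project — matches the paper's. But the central step under conditions (i) and (ii) has a genuine gap, and you also misallocate the roles of Masur's theorem and condition (iii).

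The main difficulty is not merely finding a local geodesic $\omega$ in $\widetilde S$ (or $S$): it is guaranteeing that $\phi\circ\omega$ is a local geodesic in $X$. A near-immersion induces a local isometry $\lk_v\to\lk_{\phi(v)}$ that need not be injective, so the angle of $\omega$ at a vertex $v$ — at least $\pi$ on both sides in $\lk_v$ — can collapse below $\pi$ in $\lk_{\phi(v)}$. Your proposal to take a length-minimizing loop in a non-peripheral free homotopy class does produce a local geodesic in $S$, but it might pass through a vertex in exactly this bad way. You invoke Corollary~\ref{thm:graph} to ``certify'' that $\omega$ is a genuine geodesic, but this is not what the corollary gives: if $\lk_v$ has length incommensurable with $\pi$, the contrapositive of Corollary~\ref{thm:graph} only yields \emph{some} pair of directions whose images in $\lk_{\phi(v)}$ are far apart, not the specific incoming/outgoing pair of your limiting loop. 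The paper handles this by turning the argument around: it first finds the far-apart pair $\widetilde\xi,\widetilde\eta$ via Corollary~\ref{thm:graph}, then uses Ballmann--Brin's Lemma~7.3 (Lemma~\ref{lem:BB2}) to manufacture a piecewise geodesic $\omega_1\cdots\omega_{6n}$ whose endpoints at $v$ point almost exactly along $\widetilde\xi$ and $\widetilde\eta$, and finally estimates angles at breakpoints and applies the developing-map trick. This construction, rather than any minimizing-sequence argument, is the heart of the proof under (i)/(ii), and it is absent from your proposal.

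Two smaller mismatches. First, the paper's case split is by geometry, not by which of (i), (ii), (iii) holds: if $S$ has negative curvature somewhere, Ballmann--Brin's Lemma~7.1 (Lemma~\ref{lem:BB1}) finds a closed geodesic avoiding vertices regardless of hypothesis, and you never address this case. Second, Masur's theorem is applied in the flat case precisely when every $\lk_v$ has length commensurable with $\pi$ (so a finite branched cover of $S$ is a translation surface), not because of condition (iii); the role of (iii) is narrower, namely to exclude the sub-case where $\lk_{\widetilde v}$ is a line with incommensurable translation length, and your description of ``rational billiards cut along $\Lambda$'' does not match how the paper actually uses (iii).
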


Proposition~\ref{prop:loxodromic} will be proved in Section~\ref{sec:loxodromic}.

\begin{proof}[Proof of Proposition~\ref{prop:pairs}]
After possibly passing to the barycentric subdivision of $X$ (which obviously preserves the conditions~(i) and~(ii) of Theorem~\ref{thm:torsion}, and preserves condition~(iii) by Lemma~\ref{lem:barycentric_preserves_rational}), we can assume that {\kol $\langle f,g\rangle$} acts without inversions. Let $h=g^{-1}\circ f$.
If $h$ does not have a fixed point in $X$, then the proof is complete, so we can assume that $\mathrm{Fix}(h)$ is nonempty.
Similarly, we can suppose that $\mathrm{Fix}(f)$ is disjoint from $\mathrm{Fix}(g)$.
Then $\mathrm{Fix}(h)$ is disjoint from both $\mathrm{Fix}(f)$ and $\mathrm{Fix}(g)$. By Lemma~\ref{lem:min surface}, there exists an equivariant triangulation of $\widetilde S$ w.r.t.\ $(f,g)$ with $\phi$ a near-immersion. By Proposition~\ref{prop:loxodromic}, $\langle f,g\rangle$ contains {\kol an element with no fixed point in $X$.}
\end{proof}

\section{Existence of nearly immersed triangulations}
\label{sec:surface}

The \emph{area} of a finite $2$-dimensional $\Delta$-complex is the number of its triangles. We say that an equivariant triangulation of $\widetilde S$ w.r.t.\ $(f,g)$ has \emph{minimal area} if the corresponding $\Delta$-complex structure on $S$ has minimal area among all equivariant triangulations of $\widetilde S$ w.r.t.\ $(f,g)$.

We will need the following relative simplicial approximation theorem of Zeeman.

\begin{thm}[{\cite{Z}}]
\label{thm:Z}
Let $D_0$ and $X$ be finite simplicial complexes, and let $L$ be a subcomplex of $D_0$. Let $\phi\colon D_0 \to X$  be a continuous map such that the restriction of $\phi$ to $L$ is simplicial. Then there are a simplicial subdivision $D$ of $D_0$ such that $L$ remains a subcomplex of $D$, and a simplicial map $\psi \colon D\to  X$ such that the restrictions of $\psi$ and $\phi$ to $L$ coincide.
\end{thm}

\begin{proof}[Proof of Lemma~\ref{lem:min surface}]

\smallskip
\noindent \textbf{Step 1.} \emph{There exists an equivariant triangulation.}
\smallskip

Since $f,g$ act without inversions, $\mathrm{Fix}(f),\mathrm{Fix}(g),\mathrm{Fix}(h)$ are subcomplexes. Consider any vertices $a\in \mathrm{Fix}(f), b\in \mathrm{Fix}(g),c\in \mathrm{Fix}(h),$ and any nontrivial edge-paths $\alpha$ (resp.\ $\beta$) from $c$ to $a$ (resp.\ from $c$ to $b$) in $X$. Let $\gamma\colon L\to X$ be the closed edge-path $\alpha^{-1}\beta g(\beta^{-1})f(\alpha)$, which passes through $a,c,b,g(c)=f(c)$, see the top right of Figure~\ref{fig:sphere}. Let $D_0$ be the disc with the structure of a simplicial complex obtained by conning off the cycle $L$. Since $X$ is simply connected, $\gamma$ extends to a continuous map $\phi\colon D_0 \to X$. By Theorem~\ref{thm:Z}, applied with $X$ replaced by the finite subcomplex of $X$ containing the image of $\phi$, there are a disc $D$ with a structure of a simplicial complex, and a simplicial map $\psi\colon D \to X$ whose restriction to $\partial D=L$ is~$\gamma$.

\begin{figure}[h!]
	\begin{center}
		\includegraphics[scale=0.72]{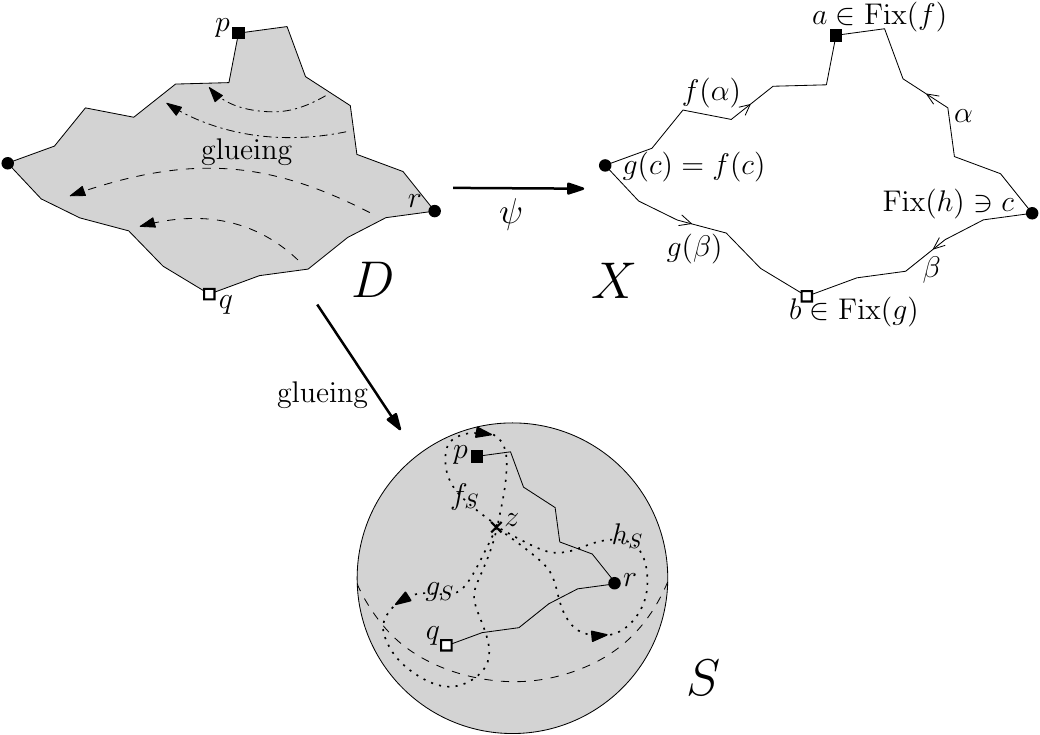}
	\end{center}
	\caption{Proof of Lemma~\ref{lem:min surface}, \textbf{Step 1.}}\label{fig:sphere}
\end{figure}

We label the points of $\partial D$ mapping under $\alpha^{-1}\beta$ to $a,b,c,$ by $p,q,r,$ respectively. We choose a basepoint $z\in D-\partial D$. Glueing in $D$ the parts of $\partial D$ that are the domains of $\alpha$ and $f(\alpha)$, and the parts of $\partial D$ that are the domains of $\beta$ and $g(\beta)$, we obtain a two-sphere. We identify this two-sphere with our template sphere $S$ from Definition~\ref{def:equivariant}, as in the bottom of Figure~\ref{fig:sphere}. This equips $S$ (and hence $\widetilde S$) with a structure of a $\Delta$-complex. We lift $D\subset \widetilde{S}$ so that $D$ contains the basepoint $\widetilde z$ of~$\widetilde{S}$. Then $D$ is a fundamental domain for the action of $\langle f_S, g_S\rangle$ on~$\widetilde{S}$ such that
\begin{enumerate}[(1)]
\item
$D$ and $f_SD$ (respectively, $D$ and $f_S^{-1}D$) share the domain of~$f(\alpha)$ (respectively,~$\alpha$), and
\item
$D$ and $g_SD$ (respectively, $D$ and $g_S^{-1}D$) share the domain of~$g(\beta)$ (respectively,~$\beta$).
\end{enumerate}

Let $\phi_*\colon \pi_1(S-\{p,q,r\},z) \to \langle f, g\rangle$ be the homomorphism mapping $f_S$ to $f$ and $g_S$ to $g$ as in Definition~\ref{def:equivariant}. We extend $\psi\colon D\to X$ to $\phi\colon \widetilde{S}\to X$ by defining, for each $w\in \langle f_S, g_S\rangle$, the restriction of $\phi$ to $wD\subset \widetilde{S}$ to be $\phi_*(w)\circ \psi \circ w^{-1}$. By~(1) and~(2), the map~$\phi$ is well defined on the intersections of $D$ with $f_SD$ and $g_SD$, and consequently $\phi$ is well defined on the intersections of $wD$ with $wf_SD$ and $wg_SD$ for each $w\in \langle f_S, g_S\rangle$.
Note that $\phi$ is equivariant, i.e.\ it satisfies for each $\tilde x \in \widetilde S$ and $w\in \langle f_S, g_S\rangle$ the formula $\phi(w \tilde x)=\phi_*(w)\phi(\tilde x)$. Indeed, for $\tilde x=w'x$ with $x\in D$ and $w'\in \langle f_S, g_S\rangle$, by definition we have $\phi(w \tilde x)=\phi(ww'x)=\phi_*(ww')\psi(x)$, while $\phi_*(w)\phi(\tilde x)=\phi_*(w)\phi(w'x)=\phi_*(w)\phi_*(w')\psi(x)$.
This completes the definition of an equivariant triangulation of $\widetilde S$.

\smallskip
\noindent \textbf{Step 2.} \emph{A minimal area equivariant triangulation has nondegenerate $\phi$.}
\smallskip

Consider an equivariant triangulation of $\widetilde S$ of minimal area.
Suppose that there is an edge $\widetilde e\subset \widetilde S$ with $\phi(\widetilde e)$ a vertex.
We will find an equivariant triangulation with strictly smaller area, which contradicts minimality.
Let~$e$ be the projection of $\widetilde e$ to~$S$.
Let $u,v$ be the endpoints of $e$.

\textbf{Case $u=v$.} In that case, assume for the moment that $u$ is distinct from $p,q,r$. Since $u=v$, we have that $e$ is an embedded closed path.
Let $B\subset S$ be the open disc bounded by $e$ containing at most one of $p,q,r$. Let $T$ be the triangle of $S$ adjacent to $e$ outside~$B$, and let $t,t'$ be the edges of $T$ distinct from $e$ (see Figure~\ref{fig:TB}, left). Note that $t\neq t'$ since otherwise $T$ would form the entire outside of $B$, which would contradict the assumption that there are at least two of $p,q,r$ outside $B$.

\begin{figure}[h!]
	\begin{center}
		\includegraphics[scale=0.66]{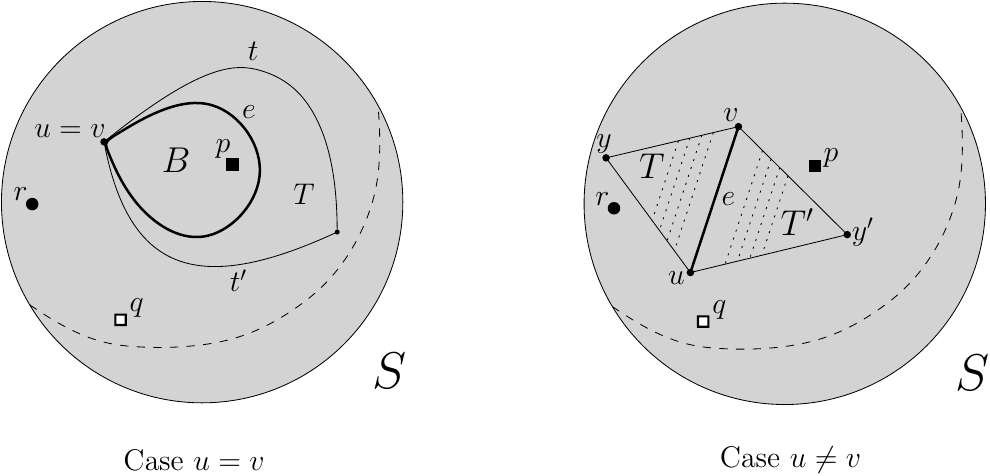}
	\end{center}
	\caption{Proof of Lemma~\ref{lem:min surface}, \textbf{Step 2.}}
\label{fig:TB}
\end{figure}

If $B$ does not contain a marked point $p,q$, or $r$, then we can remove $T\cup B$ from~$S$ and glue along $t$ and $t'$ (independent of whether all the vertices of $T$ coincide or not). This does not change the homeomorphism type of $S$ and decreases its area.
Possibly, we need to homotope the basepoint $z$ and the paths representing $f_S,g_S,h_S$ out of $T\cup B$ to keep track of the marking after this operation. If $B$ contains one of the marked points, say $p$, then we perform the same operation and additionally relabel $u$ by $p$. We can identify the modified $\widetilde{S}$ with a quotient of a subcomplex of the original~$\widetilde{S}$, and so we modify $\phi$ to be just the quotient of the restriction of the original $\phi$.

Going back to the possibility that $u$ is one of $p,q,r,$ say, $q$, we still perform the same operation, except that now $B$ will not contain $p$ (or $r$), since otherwise $\mathrm{Fix}(f)$ would intersect $\mathrm{Fix}(g)$.
Consequently we still have $t\neq t'$ and we can proceed as before.

\textbf{Case $u\neq v$.} Let $T,T'$ be the triangles of $S$ adjacent to $e$, and let $y,y'$ be the vertices opposite to $e$ in $T,T'$. If $T=T'$, then any lift $\widetilde T$ of $T$ to $\widetilde S$ is collapsed to a single vertex of $X$. Let $\widetilde {e}'$ be the edge of $\widetilde T$ that does not project to $e$. Then the projection $e'$ of $\widetilde {e}'$ has coinciding endpoints, which brings us back to the case $u=v$.

Thus we can assume that $T\neq T'$. We want to remove $T\cup T'$ from the triangulation and glue the resulting square in the boundary so that $u$ is identified with $v$. This amounts to collapsing segments of a foliation in $T\cup T'$, with leaves parallel to $e$ (see Figure~\ref{fig:TB}, right). This does not change the homeomorphism type of~$S$, as long as the leaves do not combine to circles.

Indeed, a single leaf (or a pair of leaves in a common triangle) cannot close up to a circle since the edges $yu$ and $yv$ are distinct, because $S$ does not contain a M\"obius band, and analogously the edges $uy',vy'$ are distinct. Furthermore, a pair of leaves in distinct triangles does not form a circle since otherwise $S$ would have only two triangles and consequently both $u,v$ would be in $\{p,q,r\}$, forcing some $\mathrm{Fix}(f),\mathrm{Fix}(g),\mathrm{Fix}(h)$ to intersect.

Removing $T\cup T'$ decreases the area of $S$. Note that as a result of this operation, vertices $p,q,r$ cannot become identified, since this again would mean that some $\mathrm{Fix}(f),\mathrm{Fix}(g),\mathrm{Fix}(h)$ intersect.

\smallskip
\noindent \textbf{Step 3.} \emph{A minimal area equivariant triangulation has $\phi$ a near immersion.}
\smallskip

By Step~2, $\phi$ is nondegenerate. Suppose that there is an edge $\widetilde e\subset \widetilde S$ with midpoint $\widetilde m$ where $\phi$ is not a local embedding. Again, we will reach a contradiction by showing that
the area can be decreased. Let $e$ and $m$ be the projections of $\widetilde e$ and $\widetilde m$ to $S$. Let $T,T'\subset S$ be the triangles containing $e$ and let $y,y'$ be the vertices opposite to~$e$ in $T,T'$.

To start with, note that $T\neq T'$. Indeed, if $T=T'$, then let $\tau$ be the line segment in~$T$ starting and ending at the distinct copies of $m$ in $\partial T$. Let $\widetilde\tau, w\widetilde{\tau}$ for $w\in \langle f_S, g_S\rangle$ be the two lifts of $\tau$ to $\widetilde S$ at $\widetilde m$, and let $\widetilde T$ be the lift of $T$ containing $\widetilde{\tau}$. Then $\phi(\widetilde T)$ is stabilised by $\phi_*(w)$, but not fixed pointwise, which is a contradiction.

\textbf{Case $y\neq y'$.} In that case removing $T\cup T'$ from the triangulation and glueing the resulting square in the boundary so that $y$ is identified with $y'$ is equivalent to the following. We collapse intervals of a foliation in $T\cup T'$ with leaves parallel to the union $\delta$ of line segments $ym\subset T,my'\subset T'$ (see Figure~\ref{fig:TB2}, left). Similarly as in Step~2 these leaves do not form circles, and thus collapsing them does not change the homeomorphism type of~$S$, while it decreases its area. Again as a result of this operation, vertices $p,q,r$ cannot become identified, since this would mean that some $\mathrm{Fix}(f),\mathrm{Fix}(g),\mathrm{Fix}(h)$ intersect.

\begin{figure}[h!]
	\begin{center}
		\includegraphics[scale=0.66]{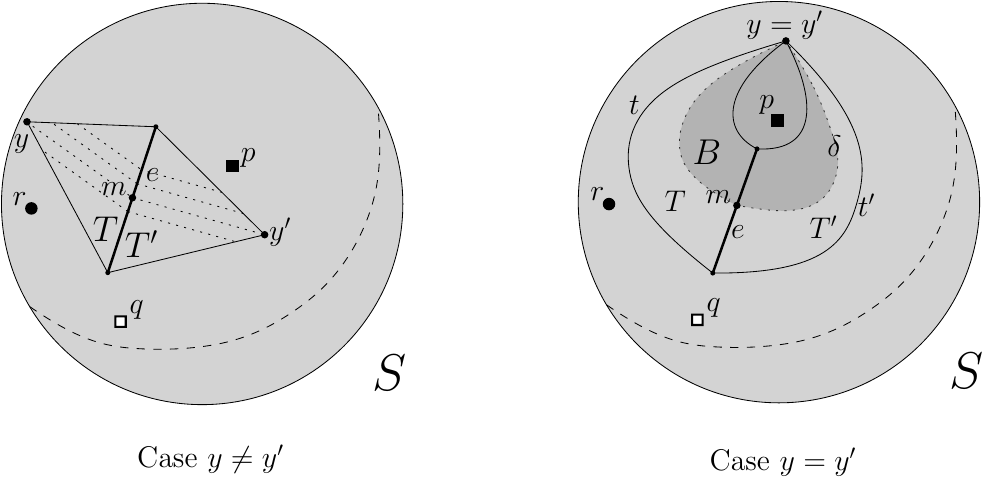}
	\end{center}
	\caption{Proof of Lemma~\ref{lem:min surface}, \textbf{Step 3.}}
	\label{fig:TB2}
\end{figure}

\textbf{Case $y=y'$.} In that case, assume for the moment that $y$ is distinct from $p,q,r$. Then $\delta$, defined as in the case $y\neq y'$, is an embedded closed path. Let $B\subset S$ be the open disc bounded by $\delta$ containing at most one of $p,q,r$. Let $t,t'$ be the edges of $T,T'$ outside $B$ (see Figure~\ref{fig:TB2}, right).
Note that $t\neq t'$ since otherwise halves of $T$ and $T'$ would form the entire outside of $B$, which would contradict the assumption that there are at least two of $p,q,r$ outside $B$. We can thus remove $T\cup T'\cup B$ and glue along $t,t'$ to decrease the area of $S$. If $B$ contained one of $p,q,r$, say, $p$, then we relabel $y$ by~$p$. If $y=y'$ is one of $p,q,r,$ say, $q$, then we define $B$ in the same way and we note that $B$ does not contain $p$ (or $r$) since otherwise $\mathrm{Fix}(f)$ would intersect $\mathrm{Fix}(g)$. Consequently we still have $t\neq t'$ and we can proceed as before.
\end{proof}

\section{Constructing axes}
\label{sec:loxodromic}

To prove Proposition~\ref{prop:loxodromic} we will need the following famous theorem of Masur establishing the existence of periodic trajectories in rational billiards.

\begin{defin}
A \emph{translation surface} $S$ is a surface obtained from identifying sides of finitely many polygons in $\R^2$ by translations.
This equips $S$ with a Riemannian metric of Gauss curvature $0$ outside a finite set $\Sigma$.
\end{defin}

\begin{lem}
\label{rem:hol}
Assume that a sphere $S$ has a piecewise smooth Riemannian metric that is smooth of Gauss curvature $0$ outside a finite set $\Sigma$.
Suppose that for each $v\in \Sigma$ the length of $\lk_{v}$ is commensurable with $\pi$. Then any finite branched cover of $S$ over $\Sigma$ has a further finite branched cover that is a translation surface.
\end{lem}
\begin{proof}

Since $\pi_1(S-\Sigma)$ is generated by the peripheral curves, the image of the holonomy map $\pi_1(S-\Sigma)\to O(2)$ is finite. Consequently, its kernel $K$ corresponds to a finite branched cover of $S$ over~$\Sigma$ that has trivial holonomy and is thus a translation surface. Thus any finite branched cover of $S$ over $\Sigma$ corresponding to a subgroup $F$ of $\pi_1(S-\Sigma)$ has a further finite branched cover corresponding to $K\cap F$ that is a translation surface.
\end{proof}

\begin{thm}[{\cite[Thm~2]{M}}]
\label{thm:masur}
Let $S$ be a translation surface. Then there is a closed local geodesic in $S-\Sigma$.
\end{thm}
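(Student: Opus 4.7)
The plan is to argue by contradiction using the dynamics of the $SL(2,\mathbb{R})$-action on the moduli space of translation surfaces. Suppose $S$ has no closed local geodesic in $S-\Sigma$. A closed local geodesic in direction $\theta\in S^1$ is exactly a periodic orbit of the linear (straight-line) flow $\phi^\theta_t$ on $S$ that avoids $\Sigma$, and such periodic orbits fill a maximal cylinder bounded by parallel saddle connections. Our assumption thus forbids such cylinders in every direction $\theta$.

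I would work inside the stratum $\mc{H}$ of unit-area translation surfaces with the same cone-angle profile as $S$, on which $SL(2,\mathbb{R})$ acts, in particular via the Teichm\"uller geodesic flow $g_t=\mathrm{diag}(e^t,e^{-t})$ and the rotations $r_\theta$. The key external ingredient is the Mumford-type compactness criterion of Masur and Smillie: for each $\eps>0$ the set $\{Y\in\mc{H}:\mathrm{sys}(Y)\geq\eps\}$ is compact modulo the mapping class group action, where $\mathrm{sys}(Y)$ denotes the length of the shortest saddle connection of $Y$. In other words, escape to infinity in moduli space is detected exclusively by the appearance of short saddle connections.

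The heart of the proof would be to show that if the vertical linear flow on $r_\theta S$ has no periodic orbit, then $\mathrm{sys}(g_t r_\theta S)\to 0$ as $t\to\infty$. This is where one invokes the theory of interval-exchange transformations: the first return of the vertical flow to a horizontal transversal is an IET, which under our assumption is minimal, and Rauzy--Veech induction forces the $g_t$-renormalised saddle connections in the vertical direction to contract below any given threshold. On the other hand, since $g_t$ preserves a finite Masur--Veech measure on $\mc{H}/\mathrm{MCG}$, Poincar\'e recurrence implies that for a.e.\ $\theta$ the orbit $\{g_t r_\theta S\}_{t\geq 0}$ returns infinitely often to a fixed compact subset of $\mc{H}/\mathrm{MCG}$. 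The two statements are incompatible, so there must exist some $\theta$ for which the vertical flow has a periodic orbit, yielding the desired closed local geodesic.

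The main obstacle will be the quantitative link between "no periodic orbit in direction $\theta$" and "systole decay of $g_t r_\theta S$" -- essentially a form of Masur's criterion applied in reverse -- which requires careful manipulation of the Rauzy--Veech coding and the geometry of saddle connections in $\mc{H}$. A more elementary route I would also keep in mind is to exploit the shortest saddle connection $\sigma$ of $S$ directly: after rotating so that $\sigma$ is horizontal, the parallel flow issuing from $\sigma$ must either close up into a cylinder (done) or return arbitrarily near $\sigma$ by minimality, at which point a parallelogram-surgery argument, combined with an area-minimality choice for the closing-up region, produces a genuine closed trajectory in $S-\Sigma$.
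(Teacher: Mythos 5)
First, a point of order: the paper does not prove this statement at all --- it is quoted directly from Masur \cite{M}, so the only ``proof in the paper'' is the citation. Your proposal is therefore an attempt to reconstruct Masur's argument, and its main route contains a fatal error.

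The heart of your plan is the implication ``if the vertical flow on $r_\theta S$ has no periodic orbit, then $\mathrm{sys}(g_t r_\theta S)\to 0$.'' This is false, and in fact the true implication runs in the opposite direction. A periodic orbit in direction $\theta$ lies in a cylinder whose core curve has length decaying like $e^{-t}$ under $g_t r_\theta$, so it is precisely the \emph{periodic} directions that force the systole to zero. Conversely, if $g_{t_n} r_\theta S$ returns to a compact part of the stratum for some $t_n\to\infty$, then there is no vertical saddle connection on $r_\theta S$ (it would have shrunk), hence the flow in direction $\theta$ is minimal and has \emph{no} periodic orbit. For almost every $\theta$ the flow is minimal and uniquely ergodic (Kerckhoff--Masur--Smillie) and the Teichm\"uller geodesic is typically recurrent, so your two ``incompatible'' statements are in fact simultaneously true for a.e.\ $\theta$ and no contradiction arises. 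Two further gaps compound this: Poincar\'e recurrence for the Masur--Veech measure applies to a.e.\ surface in the stratum and says nothing about the measure-zero circle $\{r_\theta S\}_\theta$ over your \emph{fixed} $S$ (upgrading to ``a.e.\ $\theta$ for every $S$'' is itself a hard theorem, not a soft consequence of finiteness of the measure); and absence of periodic orbits does not make the first-return interval exchange minimal, since the direction may contain saddle connections splitting the surface into several minimal components without any cylinder.

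Your closing ``more elementary route'' is, by contrast, essentially the correct strategy and is close to Masur's actual proof: normalise the shortest saddle connection $\sigma$ to be vertical, apply $g_t$ so that $\sigma$ becomes arbitrarily short while the minimality of $|\sigma|$ controls all transverse saddle connections, then flow horizontally out of $g_t\sigma$; since the trajectories sweep out disjoint area until they return, an area count in the unit-area surface forces a return to $g_t\sigma$ in bounded time, and the resulting almost-closed trajectory is upgraded to a genuine cylinder, i.e.\ a closed local geodesic missing $\Sigma$. If you want a complete proof, develop that branch quantitatively and discard the renormalisation-dichotomy branch, which cannot work in the direction you have set it up.
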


We will also need the following.

\begin{lem}[{compare \cite[Lem 7.3]{BB}}]
\label{lem:BB2}
Let $S$ be a compact 
{\kol $\Delta$-complex with piecewise smooth Riemannian metric}
that is locally $\mathrm{CAT}(0)$ and in which each edge belongs to at least two triangles. Assume
that there is a vertex $v$ and points $\xi,\eta$ in~$\lk_v$ with $d_v(\xi,\eta)=\pi$. Then for any $\eps>0$ there
is a closed path $\beta_1\beta_2\beta_3$ in $S$ such that
\begin{itemize}
\item the paths $\beta_i$ are local geodesics,
\item the angles at the breakpoints between $\beta_2$ and $\beta_1,\beta_3$ are $>\pi-\eps$,
\item $\beta_i$ {\kol are transverse to edges and} do not pass through vertices except that $\beta_1$ starts at $v$ and $\beta_3$ ends at $v$, and
\item the starting direction $\xi'$ of $\beta_1$ and ending direction $\eta'$ of $\beta_3$ satisfy $d_v(\xi,\xi')<\frac{\eps}{2}, d_v(\eta,\eta')<\frac{\eps}{2}$.
\end{itemize}
\end{lem}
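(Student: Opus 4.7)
The plan is to adapt Ballmann--Brin's argument \cite[Lem~7.3]{BB} to the piecewise smooth Riemannian setting at hand. First I would perturb $\xi,\eta$ to directions $\xi',\eta'\in\lk_v$ with $d_v(\xi,\xi'),d_v(\eta,\eta')<\eps/2$, chosen so that the local geodesic rays $\sigma_1,\sigma_2\colon[0,\infty)\to S$ issuing from $v$ in directions $\xi',\eta'$ extend for all positive time without re-entering the $0$-skeleton. Such extensions exist because each edge is contained in at least two triangles (so a local geodesic crossing an edge transversely continues uniquely into the neighbouring triangle), while avoidance of vertices can be arranged by genericity: $\lk_v$ is one-dimensional and the set of initial directions whose ray eventually meets a vertex is countable, since vertices of $S$ are finite in number and each is reached by only countably many combinatorial types of geodesic segment.

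Next I would use compactness of the unit tangent bundle of $S$, restricted away from the $0$-skeleton, to produce a near-recurrence event: large parameters $s_1,s_2>0$ such that $\sigma_1(s_1)$ and $\sigma_2(s_2)$ lie within distance $\delta$ of one another, while the tangent vectors $\dot\sigma_1(s_1)$ and $-\dot\sigma_2(s_2)$ differ by Alexandrov angle less than $\delta$. Indeed, each trajectory $\{(\sigma_i(t),\dot\sigma_i(t))\}_{t\geq 0}$ has an accumulation point by compactness, and after further perturbing $\eta'$ inside its $\eps/2$-ball these accumulation points can be matched with opposite tangent directions, using that the admissible limit directions at any fixed non-singular point form an open subset of the local unit tangent circle.

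Then I would define $\beta_2$ to be the unique geodesic segment joining $\sigma_1(s_1)$ to $\sigma_2(s_2)$, which for $\delta$ small enough lies inside a locally CAT(0) convex ball about a common non-singular point and therefore avoids the $0$-skeleton. Setting $\beta_1=\sigma_1|_{[0,s_1]}$ and $\beta_3=\sigma_2|_{[0,s_2]}^{-1}$, the concatenation $\beta_1\beta_2\beta_3$ is a closed path from $v$ to $v$ along local geodesics, meeting the $0$-skeleton only at $v$. At the breakpoint $\sigma_1(s_1)$ the incoming tangent of $\beta_1$ is $\dot\sigma_1(s_1)$, while the outgoing tangent of $\beta_2$ is $O(\delta)$-close to it because $\sigma_2(s_2)$ lies $\delta$-close to $\sigma_1(s_1)$ in the direction $-\dot\sigma_2(s_2)\approx \dot\sigma_1(s_1)$; hence the Alexandrov angle at this breakpoint, between $-\dot\sigma_1(s_1)$ and the outgoing direction of $\beta_2$, exceeds $\pi-\eps$ once $\delta$ is chosen small enough in terms of $\eps$. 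The other breakpoint is handled symmetrically, and the direction conditions on $\xi',\eta'$ are built into step one.

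The main obstacle is arranging the joint near-recurrence of $\sigma_1$ and $\sigma_2$ so that the limiting tangent directions are actually antipodal rather than merely coincident. A clean route is to fix an accumulation pair $(p,w)$ for $\sigma_1$ first, then exploit continuous dependence of the geodesic flow on initial conditions (away from the $0$-skeleton) together with a small adjustment of $\eta'$ inside its $\eps/2$-ball to steer $\sigma_2$ toward an accumulation pair close to $(p,-w)$; the freedom to make this adjustment is precisely the reason the lemma is stated with the slack $\eps/2$ in the initial directions. Once this recurrence step is established, the bounds on breakpoint angles follow from continuity of the Alexandrov angle at non-singular points, and the rest of the proof is a straightforward assembly of the definition of local geodesic in a piecewise smooth triangle complex.
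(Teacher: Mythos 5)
Your proposal hinges on a near-recurrence event that is not justified and, in my reading, is not available by the means you invoke. You want times $s_1,s_2$ with $\sigma_1(s_1)$ close to $\sigma_2(s_2)$ and $\dot\sigma_1(s_1)$ nearly opposite to $\dot\sigma_2(s_2)$. Compactness does give each trajectory accumulation points in the (suitably completed) unit tangent bundle, but the accumulation sets of the two rays are a priori unrelated, and the sentence ``after further perturbing $\eta'$ inside its $\eps/2$-ball these accumulation points can be matched with opposite tangent directions'' is an assertion, not an argument. Continuous dependence on initial conditions controls finite-time behaviour, not $\omega$-limit sets; there is no mechanism offered for steering $\sigma_2$ so that it accumulates at $(p,-w)$. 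This is precisely the hard part of the lemma, and it is where the proposal fails.

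There is a second, independent gap in the angle control. Even granting the near-recurrence, having $\sigma_2(s_2)$ within distance $\delta$ of $\sigma_1(s_1)$ with nearly opposite tangents says nothing about the \emph{direction} from $\sigma_1(s_1)$ to $\sigma_2(s_2)$: if that displacement is nearly orthogonal to $\dot\sigma_1(s_1)$ (which closeness in the tangent bundle does not exclude), the bridging geodesic $\beta_2$ leaves $\sigma_1(s_1)$ at angle near $\pi/2$ from $\dot\sigma_1(s_1)$, and the breakpoint angle is nowhere near $\pi$. You would additionally need the displacement to be nearly parallel to the common direction, which the accumulation argument does not give you.

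The paper's route is genuinely different and avoids both problems. Following Ballmann--Brin, one passes to the universal cover $X$ of $S$, where the local geodesic through $v$ in the directions $\xi,\eta$ becomes a \emph{global} geodesic of a CAT(0) space. Cocompactness (via the deck group $\pi_1(S)$) supplies a deck transformation $\varphi$ bringing a distant piece of this geodesic near the original one, and one then uses the convexity of the distance function in CAT(0) spaces to compare the geodesic $\sigma$ from a point near $\tilde v$ to its $\varphi$-translate with the original geodesic. It is the CAT(0) comparison geometry, not a recurrence statement about the geodesic flow in $S$, that forces the breakpoint angles to be close to $\pi$; the paths $\omega_1,\omega_2,\omega_3$ are read off from $\sigma$ and its entry/exit points through the flats $P,\varphi(P)$ near $\tilde v, \varphi\tilde v$, and then projected back to $S$. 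If you want to repair your proposal, you would essentially have to rediscover this structure: replace the two independent rays by the single bi-infinite local geodesic through $v$ lifted to $X$, and replace ``accumulation of the flow'' by ``existence of a deck transformation with small displacement along the geodesic,'' which is where the angle estimate actually comes from.
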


\begin{proof} We refer to the proof of \cite[Lem 7.3]{BB}, where the authors work in the universal cover of $S$ (which they call $X$). Once they construct their geodesic $\sigma$, define $\omega_2$ as the subpath of $\sigma$ between $P$ and $\varphi (P)$, and $\omega_1, \omega_3$ as the geodesics joining the endpoints of $\omega_2$ to $v,\varphi v$ in $P, \varphi(P)$. The projection of $\omega_1\omega_2\omega_3$ to $S$ is the required path $\beta_1\beta_2\beta_3$.
\end{proof}

{\kol \begin{lem}
\label{lem:new}
Let $X$ be a $\mathrm{CAT}(0)$ triangle complex and let $G$ be a group acting on~$X$. 
Suppose that there is a point $x$
\begin{enumerate}[(1)]
\item
in the interior of a triangle $T$ with negative Gauss curvature, or
\item in the interior of an edge $e$ with negative sum of
    geodesic curvatures in a pair of incident triangles
    $T,T'$.
\end{enumerate}
Then there is a $\mathrm{CAT}(0)$ triangle complex $\overline
X$ with an action of $G$, and a $G$-equivariant bilipschitz map
$X\to \overline X$ that is a composition of a subdivision and a
replacement of the piecewise smooth Riemannian metric, with a
vertex $u\in \overline X$ in the image of the interior of
$T,T',$ or $e,$ whose {\koll link} $\overline{\lk}_u$ in
$\overline X$ is either
\begin{itemize}
\item
a circle of length $>2\pi$, or
\item
a graph obtained from a family of disjoint circles of length $2\pi$ by glueing them along an arc $b$ of length $<\pi$.
Furthermore, $b$ corresponds to a triangle of~$X$ distinct from $T,T'$.
\end{itemize}
\end{lem}

\begin{proof} After possibly passing to the barycentric subdivision of $X$, we can assume that $G$ acts without inversions and no $g\in G$ sends an edge to a distinct edge in a common triangle.

In case~(1), choose a geodesic triangle $t$ in the interior of $T$ containing~$x$. Let $\overline t\subset \R^2$ be its comparison triangle, i.e.\ a geodesic triangle with the same edge lengths as~$t$. Let $\overline X$ be obtained from $X$ by a $G$-equivariant subdivision in which $t$ becomes a cell, and the $G$-equivariant replacement of the metric on $gt$ by that of $g\overline t$, for each $g\in G$. By \cite[II.2.9]{BH}, the angles of~$\overline t$ are larger than the corresponding angles of $t$. Consequently, $\overline X$ is $\mathrm{CAT}(0)$ and for each vertex $u$ of $\overline t$ we have that $\overline{\lk}_u$ is a circle of length $>2\pi$.

In case~(2), assume without loss of generality that the geodesic curvature $\kappa_-$ at~$x$ in $T$ is negative. Assume first that there is no triangle containing $e$ with positive geodesic curvature at $x$. Let $\kappa_-<\kappa<0$. Let $a$ be an arc in $e$ containing $x$ and with each point of $a$ of geodesic curvature
\begin{enumerate}[(i)]
\item
$\leq \kappa$ in $T$, and
\item
$\leq-\kappa$ in any other incident triangle.
\end{enumerate}
Let $u$ be a point in the interior of $T$ such that the
geodesic $ux$ in $T$ intersects $\partial T$ only at $x$, and
at angle $\frac{\pi}{2}$. After possibly shrinking~$a$,
denoting by $y,z$ its endpoints, we have that the geodesics
$uy,uz$ in $T$ intersect~$\partial T$ only at $y,z$, and at
positive angles. Let $t\subset T$ be the region bounded by $a,
uy,$ and $uz$. Let $\overline t\subset \R^2$ be its `comparison
region', that is, a region bounded by a circle arc $\overline
a$ of curvature $\kappa$ and length $|a|$ (the arc length of
$a$) and geodesics of lengths $d(u,y),d(u,z)$. See
Figure~\ref{fig:comptr}. (Note that $\overline t$ exists for
$|a|$ sufficiently small with respect to $d(u,x)$ and
$\frac{1}{|\kappa|}$.) Let $\overline X$ be obtained from $X$
by a $G$-equivariant subdivision in which $t$ becomes a cell,
and the $G$-equivariant replacement of the metric on $gt$ by
that of $g\overline t$, for each $g\in G$.

\begin{figure}[h!]
	\begin{center}
		\includegraphics[scale=0.65]{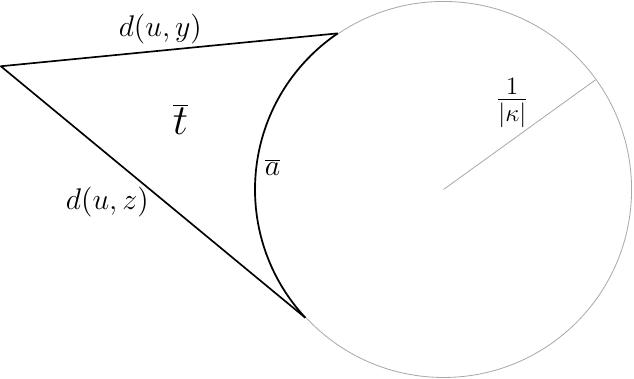}
	\end{center}
	\caption{`Comparison region' $\overline t$ in case (2).}
	\label{fig:comptr}
\end{figure}

We claim that the angles of $\overline t$ are larger than the corresponding angles of $t$. Indeed, let $s\subset \R^2$ be the region bounded by $\overline a$ and the geodesic joining the endpoints of $\overline a$. Note that $\overline t\cup s$ is a Euclidean triangle and $t\cup s$ (obtained by identifying $a$ with $\overline a$) is $\mathrm{CAT}(0)$ by condition~(i). By \cite[II.2.9]{BH}, the angles of $\overline t\cup s$ are larger than the corresponding angles of $t\cup s$, justifying the claim. By the claim and condition~(ii), we have that $\overline X$ satisfies conditions (a)--(c) of Section~\ref{sec:spaces} and $\overline{\lk}_u$ is a circle of length $>2\pi$.

However, since $\overline X$ might be locally infinite at $e$, we still need to justify that conditions (a)--(c) of Section~\ref{sec:spaces} imply that $\overline X$ is locally $\mathrm{CAT}(0)$ at $e$. Let $\mathrm{St}(e)$ be the union of all the closed triangles of $X$ containing $e$. Let $Y\subset \mathrm{St}(e)$ be the union of the triangles $T_+$ for which there exists a point on $e$ with positive geodesic curvature in $T_+$. By condition~(c) in $X$, there is at most one such triangle of given isometry type $T_0$ and given embedding $e\subset T_0$, so $Y$ has finitely many triangles. For each triangle $T_*$ of $\mathrm{St}(e)$ outside $Y$, denote $Y_{T_*}=Y\cup T_*$. By conditions (a)--(c) in $\overline X$, we have that the image $\overline Y_{T_*}$ of each $Y_{T_*}$ in $\overline X$ is $\mathrm{CAT}(0)$. Furthermore, for $\overline Y$ the image of~$Y$ in $\overline X$, the inclusion $\overline Y\subset \overline Y_{T_*}$ is an isometric embedding, by the claim and since points of $e$ have nonpositive geodesic curvature in all the triangles of $\overline X$ contained in the image of $T_*$. By \cite[II.11.3]{BH}, the union~$\overline{\mathrm{St}}(e)$ of~$\overline Y_{T_*}$ is $\mathrm{CAT}(0)$, as desired.

Finally, assume in case~(2) that there is a triangle $T_+$
containing $e$ with positive geodesic curvature $\kappa_+$ at
$x$. If $T'=T_+$, then we have $\kappa_-<-\kappa_+$.
Consequently, we can repeat the argument above assuming
$\kappa_-<\kappa<-\kappa_+$ instead of $\kappa_-<\kappa<0$.

If $T'\neq T_+$, then we choose an arc $a$ in $e$ containing
$x$, with endpoints $y,z,$ such that the geodesic $yz$ in $T_+$
intersects $\partial T_+$ only at $y,z$, and at positive
angles. Let $s\subset T_+$ be the region bounded by $a$ and
$yz$. To form $\overline X$, we replace the metric in~$T_+$ by
that of $T_+\setminus s$, and the metric in each triangle
$T_*\neq T_+$ containing $e$ by that of $T_*\cup s$ (which is
smooth with respect to a subdivision including $T_*\cap s$). We
perform the same replacement on the $G$-orbit of $T_+$ and each
$T_*$. Note that $\overline X$ satisfies conditions (a)--(c) of
Section~\ref{sec:spaces}, and hence is $\mathrm{CAT}(0)$ as
before. Moreover, both $\overline{\lk}_y,\overline{\lk}_z$ have
the form described in the second bullet, with $b$ corresponding
to $T_+\neq T,T'$. Note that in both
$\overline{\lk}_y,\overline{\lk}_z$, the arc $b$ has length
$<\pi$ since the geodesic $yz$ in $T_+$ met $\partial T_+$ at
positive angles.
\end{proof}}

{\kol In the proof of Proposition~\ref{prop:loxodromic} we will use the following notions.} If $X$ is a $\mathrm{CAT(0)}$ space, an element $g\in G$ is \emph{loxodromic} if there is a geodesic $\widetilde \omega\subset X$ (called an \emph{axis})
such that $g$ preserves $\widetilde \omega$ and acts on it as a nontrivial translation. A loxodromic element does not have a fixed point in $X$.

\begin{proof}[Proof of Proposition~\ref{prop:loxodromic}.]
{\kol We equip $\widetilde S$ and $S$ with the piecewise smooth Riemannian metric pulled back from $X$ via $\phi$.
Since $\phi$ is a near-immersion, by conditions (a)--(c) of Section~\ref{sec:spaces}, we have that $S-\{p,q,r\}$ is locally CAT(0). After replacing~$S$ with a surface that is a finite branched cover of $S$ over $\{p,q,r\}$, we can assume that $S$ (which is no longer a sphere) is locally CAT(0) at every point. Let $\Sigma$ be the vertex set of $S$.

Consider first the case, where there is no point in the interior of a triangle of $S$ with negative Gauss curvature, and no point in the interior of an edge of $S$ with negative sum of its two geodesic curvatures.
Then} $S-\Sigma$ is smooth with Gauss curvature~$0$. Suppose first that for each $v\in \Sigma$, the length of $\lk_{v}$ is commensurable with $\pi$. By Lemma~\ref{rem:hol}, there is a finite branched cover $S'$ of $S$ over $\Sigma$
that is a translation surface. By Theorem~\ref{thm:masur}, $S'$ has a closed local geodesic $\omega'$
outside the vertex set.
{\kol Let $\omega$ be the projection of $\omega'$ to $S$. Let $\widetilde{\omega}$ be a lift of $\omega$ to $\widetilde{S}$, which is an axis for some $w\in \langle f_S, g_S\rangle$. For $e$ an edge of $\widetilde S$  intersected by $\widetilde{\omega}$, let $T,T'$ be the triangles of $\widetilde S$ containing $e$. Since $\phi$ is a near-immersion, we have $\phi(T)\neq \phi(T')$. Furthermore, since the sum of the geodesic curvatures at any point~$x$ of~$e$ in~$\phi(T)$ and~$\phi(T')$ equals $0$, the geodesic curvature at $x$ in any triangle of $X$ distinct from $\phi(T),\phi(T')$ is nonpositive. Consequently, $\phi(T\cup T')$ is locally convex at $\phi(e)$ in $X$. Thus $\phi(\widetilde{\omega})$ is a local, hence global, geodesic in~$X$. This implies that $\phi_*(w)\in \langle f,g\rangle$ is loxodromic.}

{\kol Suppose} now that for some $v\in\Sigma$ the length of $\lk_{v}$ is not commensurable with~$\pi$.
Let $\widetilde v$ be a lift of $v$ to $\widetilde S$. If $\lk_{\widetilde{v}}$ is a circle, consider the closed immersed edge-path $\gamma\colon\lk_{\widetilde{v}}\to\lk_{\phi(\widetilde{v})}$ induced by $\phi$. This path does not satisfy the conclusion of Corollary~\ref{thm:graph}. Thus there are points $\widetilde{\xi},\widetilde{\eta}\in \lk_{\widetilde{v}}$ such that their images in $\lk_{\phi(\widetilde{v})}$ are at distance $>\pi+\delta$ for some $\delta>0$.

If $\lk_{\widetilde{v}}$ is a line, we construct $\widetilde{\xi},\widetilde{\eta}$ in the following way.
Assume without loss of generality that $\widetilde{v}$ is fixed by $f_S$. First, we claim that $X$ does not have rational angles with respect to the action of {\kol $\langle f,g\rangle$}. Otherwise, let $\Lambda\subset \lk_{\phi(\widetilde{v})}$ be the discrete set from Definition~\ref{def:rational}. Then the complementary components in $\lk_{\widetilde{v}}$ of $\phi^{-1}(\Lambda)$ are finite, and of length commensurable with $\pi$. Moreover, since $f$ preserves $\Lambda$, we have that $f_S$ preserves $\phi^{-1}(\Lambda)$. This contradicts the assumption that the length of $\lk_{v}$ is not commensurable with $\pi$, justifying the claim.

According to our hypotheses this means that either $X$ is locally finite, or each element of $G$ fixing a point of $X$ has finite order.
In both cases there is a directed edge $e$ in $\lk_{\widetilde{v}}$ and $k\geq 1$ such that $\phi(e)=\phi(f_S^ke)$. Thus the path in $\lk_{\widetilde{v}}$ from the endpoint of $e$ to the endpoint of $f^k_Se$ maps under $\phi$ to a closed edge-path that does not satisfy the conclusion of Corollary~\ref{thm:graph}, and we obtain $\widetilde{\xi},\widetilde{\eta}$ as before.

Let $\xi,\eta$ be the projections of $\widetilde{\xi},\widetilde{\eta}$ to $\lk_{v}$. Orbits of the rotation by $\pi$ on the circle $\lk_{v}$ of length not commensurable with $\pi$ are dense. Thus in $\lk_{v}$ we can find $\xi_1=\xi, \xi_2, \ldots, \xi_{2n}$ with $d_{v}(\xi_i,\xi_{i+1})=\pi$ and $d_{v}(\xi_{2n},\eta)<\frac{\delta}{2}$. Inspired by \cite[Lem 7.4]{BB}, we construct the following path $\widetilde{\omega}=\omega_1\cdots \omega_{6n}$ in $\widetilde{S}$.
To start with, we put $\eps=\frac{\delta}{12n}$ and apply Lemma~\ref{lem:BB2} to $\xi_1,\xi_2$. We define $\omega_1\omega_2\omega_3$ to be the lift of $\beta_1\beta_2\beta_3$ starting in the direction at distance $<\frac{\eps}{2}$ from $\widetilde{\xi}$. Let $\widetilde v_2$ be the endpoint of $\omega_3$ and let $\widetilde{\xi}_2$ in $\lk_{\widetilde {v}_2}$ be the lift of $\xi_2$ at distance $<\frac{\eps}{2}$ from the ending direction of $\omega_3$. Since $d_{\kol v}(\xi_2,\xi_{3})=\pi$, there is a lift $\widetilde{\xi}_3$ of $\xi_3$ in $\lk_{\widetilde v_2}$ with $d_{\widetilde v_2}(\widetilde{\xi}_2,\widetilde{\xi}_{3})=\pi$. Apply now Lemma~\ref{lem:BB2} to $\xi_3,\xi_4$ and define $\omega_4\omega_5\omega_6$ to be the lift of the resulting $\beta_1\beta_2\beta_3$ starting in the direction at distance $<\frac{\eps}{2}$ from $\widetilde{\xi}_3$ etc. See Figure~\ref{fig:omega6}. The endpoint of $\omega_{6n}$ has the form $w\widetilde {v}$ for some $w\in \langle f_S,g_S\rangle$, and since $d_{v}(\xi_{2n},\eta)\leq\frac{\delta}{2}$, we can choose $w$ so that the ending direction of $\omega_{6n}$ is at distance $<\frac{\eps}{2}+\frac{\delta}{2}$ from $w\widetilde \eta$.

\begin{figure}[h!]
	\begin{center}
	\includegraphics[scale=0.65]{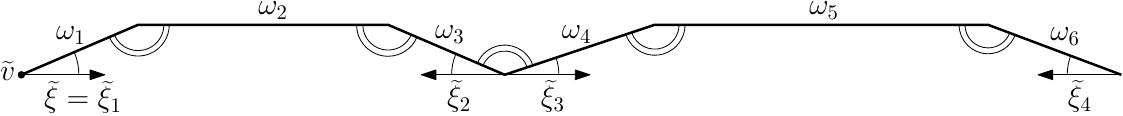}
	\end{center}
	\caption{Path $\omega_1\cdots \omega_6.$ Angles indicated with a single arc are $<\frac{\eps}{2}$. Angles indicated with a double arc are $> \pi-\eps$.}
	\label{fig:omega6}
\end{figure}

{\kol Note that since $\phi$ is a near-immersion and $\omega_i$ are transverse to the edges of $\widetilde S$, we have that $\phi(\omega_i)$ are geodesics in $X$.} Let $x,y=\phi_*(w)x\in X$ be the endpoints of $\phi(\widetilde{\omega})$, and denote by $\xi'$ and $\eta'$ the starting and ending directions of $\phi(\widetilde{\omega})$. Then $d_x(\phi(\widetilde{\xi}),\xi')< \frac{\eps}{2}$ and $d_y(\phi(w\widetilde{\eta}), \eta')< \frac{\eps}{2}+\frac{\delta}{2}$. Let $\alpha$ be the geodesic from $x$ to $y$ in~$X$, and denote by $\xi''$ and $\eta''$ the starting and ending directions of $\alpha$. Note that the angles at all the breakpoints of $\phi(\widetilde{\omega})$ are $> \pi-\eps$.
By \cite[Lem~2.5]{BB} we have $d_x(\xi',\xi'')+d_y(\eta',\eta'')< (6n-1)\eps$. We thus have:
\begin{align*}
d_y(\phi_*(w)\xi'',\eta'')&\geq  d_y(\phi_*(w)\xi',\eta')-d_y(\phi_*(w)\xi'',\phi_*(w)\xi')-d_y(\eta',\eta'')\\
&>d_y(\phi_*(w)\xi',\eta')-(6n-1)\eps\\
&\geq  d_y\big(\phi(w\widetilde {\xi}),\phi(w\widetilde {\eta})\big)-d_y\big(\phi_*(w)\xi',\phi(w\widetilde {\xi})\big) -d_y\big(\phi(w\widetilde {\eta}),\eta'\big)-(6n-1)\eps\\
&>
(\pi+\delta)-\frac{\eps}{2}-\Big(\frac{\eps}{2}+\frac{\delta}{2}\Big)-(6n-1)\eps=\pi.
\end{align*}
Consequently the concatenation of {\kol $\phi_*(w)^k\alpha$}, for $k\in \Z$, is a local (hence global) geodesic, and thus $\phi_*(w)$ is loxodromic.

{\kol It remains to consider the case, where there is a point
in the interior of a triangle~$T$ of $\widetilde S$ with
negative Gauss curvature, or a point in the interior of an edge
of~$\widetilde S$ with negative sum of its geodesic curvatures
in incident triangles $T,T'$. Let $\overline X$ be the complex
obtained from $X$ through Lemma~\ref{lem:new} applied to
$\phi(T)$ and~$\phi(T'),$ for $G=\langle f,g\rangle,$ with a
vertex $u$ satisfying one of the two bullets of
Lemma~\ref{lem:new}. Let $\widetilde v\in T\cup T'$ be the
preimage of $u$ under the composition $\overline {\phi}\colon
\widetilde S\to X\to\overline X$, and let $v$ be the projection
of~$\widetilde v$ to~$S$. Replace the $\Delta$-complex
structure and the piecewise smooth Riemannian metric on
$\widetilde S$ by the one pulled back from $\overline {X}$ via
$\overline{\phi}$.

Suppose first that $\overline{\lk}_u$ is a circle of length $\geq 2(\pi+\delta)$, which is then isometric to $\lk_{\widetilde{v}}$ and $\lk_v$. Let
$\xi_1,\xi_4$ be antipodal points in~$\lk_{v}$, and let $\xi_2,\xi_3\in \lk_{v}$ be also antipodal points with
$d_v(\xi_1,\xi_2)=d_v(\xi_3,\xi_4)=\pi$. We take $\eps=\frac{\delta}{5}$ and use Lemma~\ref{lem:BB2} as before to obtain appropriate paths
$\omega_1\omega_2 \omega_3$ and $\omega_4\omega_5 \omega_6$ in~$\widetilde S$.  Let $\alpha_1$ and $\alpha_2$ be the geodesics in $\overline X$ with
the same endpoints as $\phi(\omega_1\omega_2 \omega_3)$ and $\phi(\omega_4\omega_5 \omega_6)$. Then by \cite[Lem~2.5]{BB}, the starting direction of
$\alpha_2$ is at angle $< 2\eps$ from the starting direction of $\phi(\omega_4)$, which is at angle $>\pi+\delta-2\frac{\eps}{2}$ from the ending
direction of~$\phi(\omega_3)$, which is in turn at angle $< 2\eps$ from the ending direction of $\alpha_1$. Thus $\alpha_1\alpha_2$ is a geodesic.
Similarly, for appropriate $w\in \langle f_S,g_S\rangle$, we have that $\alpha_2\phi_*(w)\alpha_1$ is a geodesic. Then the concatenation of
$\phi_*(w)^k(\alpha_1\alpha_2),$ for~$k\in \Z$, is a geodesic, and $\phi_*(w)$ acts on it as a nontrivial translation. Consequently, $\phi_*(w)$ is
loxodromic w.r.t.\ the action on~$\overline X$ and in particular has no fixed point in $\overline X$. Thus $\phi_*(w)$ has no fixed point in $X$.

Finally, suppose that $\overline{\lk}_u$ is a graph obtained
from a family of disjoint circles $C_1,C_2,\ldots$ of length
$2\pi$ by glueing them along an arc $b$ of length $<\pi$. We
can assume that triangles $\phi(T)$ and $\phi(T')$ correspond
to $C_1\setminus b$ and $C_2\setminus b$. Then
$\overline{\phi}$ induces an embedding of $\lk_{\widetilde{v}}$
into $\overline{\lk}_u$ allowing us to identify
$\lk_{\widetilde{v}}$, and hence $\lk_v$, with the circle
$(C_1\cup C_2)\setminus b$. Under this identification, let
$\xi_1,\xi_3\in C_1\setminus b$ and $\xi_2,\xi_4\in
C_2\setminus b$ be points at distance $\frac{\pi}{2}$ from the
endpoints of~$b$, with $d_v(\xi_1,\xi_2)=d_v(\xi_3,\xi_4)=\pi$.
Choosing $\delta=|b|$, we can then use Lemma~\ref{lem:BB2} as
before to obtain an element $\phi_*(w)$ that is loxodromic
w.r.t.\ the action on~$\overline X$.}
\end{proof}

\section{Triples of generators}
\label{sec:3}

In this Section we complete the proof of Theorem~\ref{thm:torsion}.

\begin{prop}
	\label{prop:triples}
	Let $f,g,h$ be automorphisms of a $\mathrm{CAT}(0)$ triangle complex $X$ satisfying condition (i),(ii), or (iii) of Theorem~\ref{thm:torsion}, {\kol with respect to $\langle f,g,h\rangle$}. Suppose that all $\mathrm{Fix}(f)\cap \mathrm{Fix}(g), \mathrm{Fix}(f)\cap\mathrm{Fix}(h),$ and $\mathrm{Fix}(g)\cap\mathrm{Fix}(h)$ are nonempty. Then $\mathrm{Fix}(f)\cap\mathrm{Fix}(g)\cap\mathrm{Fix}(h)$ is nonempty or
	$\langle f,g,h\rangle$ contains an element {\kol with no fixed point in $X$}.
\end{prop}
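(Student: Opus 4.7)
The plan is to parallel the proof of Proposition~\ref{prop:pairs}, replacing the sphere with three marked points by a \emph{disk with three boundary arcs}, as sketched in the introduction. Arguing by contradiction, suppose $\mathrm{Fix}(f)\cap\mathrm{Fix}(g)\cap\mathrm{Fix}(h)=\emptyset$; the goal is to produce a loxodromic element of $\langle f,g,h\rangle$. After subdividing $X$, we may assume $f,g,h$ act without inversions, so their fixed-point sets are subcomplexes. Choose vertices $a\in\mathrm{Fix}(g)\cap\mathrm{Fix}(h)$, $b\in\mathrm{Fix}(f)\cap\mathrm{Fix}(h)$, $c\in\mathrm{Fix}(f)\cap\mathrm{Fix}(g)$, and edge-paths $\alpha\subset\mathrm{Fix}(h)$, $\beta\subset\mathrm{Fix}(f)$, $\gamma\subset\mathrm{Fix}(g)$ joining them pairwise. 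Relative simplicial approximation applied to the nullhomotopic closed edge-path $\alpha\beta\gamma$ yields a simplicial disk $\Delta$ with boundary decomposed into three arcs $\partial\Delta=P\cup Q\cup R$ meeting at corners $c,a,b$, together with a simplicial map $\psi\colon\Delta\to X$ sending $P,Q,R$ into $\mathrm{Fix}(f),\mathrm{Fix}(g),\mathrm{Fix}(h)$ respectively.

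Next I would establish the analog of Lemma~\ref{lem:min surface}: among all such disks choose one of minimal area and show that the corresponding $\psi$ is a near-immersion. The surgery arguments (elimination of degenerate edges and of interior edges where $\psi$ is not locally embedding) transfer with only cosmetic changes. The triple-empty hypothesis would replace the pairwise-empty one, since an area-reducing surgery is forbidden precisely when it would identify two of the three distinguished corners, thereby placing a point in the triple intersection.

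Once $\psi$ is a near-immersion, the pulled-back piecewise smooth Riemannian metric makes $\Delta$ locally $\mathrm{CAT}(0)$ away from its $0$-skeleton, with three ``mirrored'' boundary arcs where the mirror across $P$ (resp.\ $Q$, $R$) realises the isometry $f$ (resp.\ $g$, $h$) of $X$. I would then run the case analysis from Proposition~\ref{prop:loxodromic}: (i) if some triangle or edge of $\Delta$ carries strictly negative curvature, Lemma~\ref{lem:BB1} applied to the doubled sphere $DD(\Delta)$ yields a closed local geodesic missing the vertex set; (ii) if the interior is flat and every vertex link has length commensurable with $\pi$, a finite branched cover of $DD(\Delta)$ over its corners is a translation surface, and Masur's Theorem~\ref{thm:masur} produces a periodic billiard trajectory in $\Delta$ avoiding the corners; (iii) if some vertex has link of length non-commensurable with $\pi$, Corollary~\ref{thm:graph} combined with Lemma~\ref{lem:BB2} produces a closed broken billiard trajectory with corner angles arbitrarily close to $\pi$. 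In each case, the trajectory $\omega\subset\Delta$ unfolds to a bi-infinite path $\widetilde\omega\subset X$ by successively applying $f,g,h$ at each boundary reflection; the one-period monodromy $w\in\langle f,g,h\rangle$ translates $\widetilde\omega$, making it an axis and $w$ loxodromic.

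The main obstacle I anticipate is the case (iii) analysis together with the verification that $\widetilde\omega$ is a genuine global geodesic in $X$. Unlike the pair case, where a single equivariant map $\phi\colon\widetilde S\to X$ handles everything, here the absence of a prescribed relation among $f,g,h$ forces the unfolding to be tracked trajectory-by-trajectory, with the corresponding word in $\langle f,g,h\rangle$ recorded as one follows the boundary reflections. One must check that the angle estimate at each such reflection, combined with the large breakpoint angles provided by Lemma~\ref{lem:BB2} at vertices, totals at least $\pi$, so that $\widetilde\omega$ is locally geodesic and hence globally geodesic in the $\mathrm{CAT}(0)$ space $X$.
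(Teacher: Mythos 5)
Your proposal takes a genuinely different route from the paper, and it has a gap.

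The paper's proof reduces Proposition~\ref{prop:triples} directly to Proposition~\ref{prop:loxodromic} by a clean algebraic trick: after establishing the minimal-area admissible triangulation (Lemma~\ref{lem:min surface2}), it doubles $\Delta$ over the \emph{single} side $R$ (not over the whole boundary), extends $\psi$ by $h\circ\psi$ on the second copy, and then observes that the resulting disk $D$ with boundary $\alpha^{-1}\beta\,g'(\beta^{-1})f'(\alpha)$ --- where $f'=h\circ f$ and $g'=h\circ g$ --- is exactly the data needed to build an equivariant near-immersion $\phi\colon\widetilde S\to X$ with respect to the pair $(f',g')$. Proposition~\ref{prop:loxodromic} then applies as a black box, and the loxodromic element lies in $\langle f',g'\rangle\subset\langle f,g,h\rangle$. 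You instead propose to track billiards in $\Delta$ directly with three mirrors; this is conceptually natural but forces you to re-prove the entire case analysis of Proposition~\ref{prop:loxodromic} in a reflection setting, including rebuilding the translation-surface and Lemma~\ref{lem:BB2} arguments with the right branched covers at the corners $p,q,r$ (where the angles of $\Delta$ can be small, so the naive double $DD(\Delta)$ need not be locally $\mathrm{CAT}(0)$). The doubling-plus-substitution trick buys you all of that for free.

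The genuine gap is the missing analogue of the paper's property ($\star$): for every edge $e\subset P$ (resp.\ $Q,R$), the triangle $T$ containing $e$ must not be mapped by $\psi$ into $\mathrm{Fix}(f)$ (resp.\ $\mathrm{Fix}(g),\mathrm{Fix}(h)$). A near-immersion $\psi\colon\Delta\to X$ is automatic at boundary-edge midpoints (there is only one incident triangle), so the near-immersion property says nothing there. When your billiard trajectory reflects off $e\subset P$, the unfolded trajectory in $X$ continues into $f(\psi(T))$; if $\psi(T)\subset\mathrm{Fix}(f)$ then $f(\psi(T))=\psi(T)$, so the developed path folds back into the same triangle and breaks at $\psi(m)$ with angle $<\pi$ --- it is not a local geodesic, and your whole loxodromic construction collapses. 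You must first pass to a subcomplex of $\Delta$ (or strengthen the minimality argument) to rule this out, exactly as the paper does before doubling. Without this step the assertion ``the trajectory $\omega\subset\Delta$ unfolds to a bi-infinite path $\widetilde\omega\subset X$ ... making it an axis'' is unjustified.
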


In the proof we will need the following notion. Let $\Delta$ be a disc with decomposition of its boundary into three paths $\partial \Delta=P\cup Q\cup R$. An \emph{admissible triangulation} of $\Delta$ w.r.t.\ $(f,g,h)$ is a structure on $\Delta$ of a  $\Delta$-complex, with $P\cap Q, Q\cap R, P\cap R$ among the vertices, together with a simplicial map $\psi$ from $\Delta$ to $X$ that sends $P,Q,R$ into $\mathrm{Fix}(f),\mathrm{Fix}(g),\mathrm{Fix}(h),$ respectively.

\begin{lem}
\label{lem:min surface2}
Let $f,g,h$ be automorphisms of a $\mathrm{CAT}(0)$ triangle complex~$X$ 
acting without inversions. Suppose that $\mathrm{Fix}(f),\mathrm{Fix}(g),\mathrm{Fix}(h)$ pairwise intersect but their triple intersection is empty. Then there exists an admissible triangulation of $\Delta$ w.r.t.\ $(f,g,h)$ with $\psi$ a near-immersion.
\end{lem}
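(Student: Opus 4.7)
The plan is to mirror the three-step argument used in the proof of Lemma~\ref{lem:min surface}, replacing the sphere-with-three-punctures setup by a disc whose boundary is decomposed into three arcs. \textbf{Step~1 (existence).} Since $X$ is $\mathrm{CAT}(0)$ and $f,g,h$ act without inversions, each of $\mathrm{Fix}(f), \mathrm{Fix}(g), \mathrm{Fix}(h)$ is a convex, hence connected, subcomplex. I pick vertices $p\in\mathrm{Fix}(f)\cap\mathrm{Fix}(h)$, $q\in\mathrm{Fix}(f)\cap\mathrm{Fix}(g)$, $r\in\mathrm{Fix}(g)\cap\mathrm{Fix}(h)$, which are pairwise distinct because the triple intersection is empty, together with edge-paths $\alpha,\beta,\gamma$ joining them cyclically inside the three fixed sets. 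The loop $\alpha\beta\gamma$ extends by relative simplicial approximation \cite{Z} to a simplicial map $\psi\colon \Delta \to X$ from a triangulated disc, and the induced decomposition $\partial \Delta = P\cup Q\cup R$ gives an admissible triangulation.

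\textbf{Step~2 (area minimisation).} I then choose an admissible triangulation of minimal area. The surgery operations from Steps~2 and~3 of the proof of Lemma~\ref{lem:min surface} transfer directly: if $\psi$ collapses some edge $e\subset \Delta$ to a vertex, or fails to be a local embedding at the midpoint of some edge $e$, then removing the triangle(s) containing $e$---together with a possible subdisc $B$ bounded by a loop formed from collapsing a foliation whose leaves are parallel to $e$ or to the union of the two medians through the midpoint of $e$---strictly decreases the area while preserving the homeomorphism type of $\Delta$. Contradicting minimality, this forces $\psi$ to be a near-immersion.

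\textbf{Main obstacle.} The bulk of the work is to verify that each surgery yields another \emph{admissible} triangulation, i.e.\ that the boundary decomposition $P\cup Q\cup R$ is preserved and that the corners of $\Delta$ still map to the correct pairwise intersections. In the original proof the forbidden identification was between two of the punctures $p,q,r$, equivalently a common point of two pairwise-disjoint fixed sets. Here the analogous forbidden identification is anything that would place a point in $\mathrm{Fix}(f)\cap\mathrm{Fix}(g)\cap\mathrm{Fix}(h)$---for instance, identifying two of the corners $p,q,r$, or folding a non-corner vertex of $P$ onto a vertex of $Q$ or $R$. The case analyses $u=v$ vs.\ $u\neq v$ and $y=y'$ vs.\ $y\neq y'$ go through with this substitution: in the $u=v$ or $y=y'$ subcase the subdisc $B$ is chosen to contain at most one corner (absorbed by a relabelling of a surviving vertex), and in the remaining subcase the foliation argument is in fact easier than for a closed surface, since $\Delta$ has boundary so closed leaves are immediately ruled out.
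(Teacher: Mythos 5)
Your overall strategy is correct and matches the paper's: the paper's proof of this lemma is exactly a transplant of the three-step argument from Lemma~\ref{lem:min surface}, with a minimal-area admissible triangulation obtained by relative simplicial approximation, followed by surgeries that would decrease the area. However, your analysis of the new difficulties introduced by the boundary $\partial\Delta$ is off in a way that matters.

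First, the ``forbidden identification'' you name is not in fact forbidden. If a non-corner vertex $u$ of $P$ gets identified with a non-corner vertex $v$ of $Q$ (via an edge $e$ with $\psi(e)$ a vertex, say), the resulting point lies in $\mathrm{Fix}(f)\cap\mathrm{Fix}(g)$, which is nonempty by hypothesis --- so nothing is contradicted, and the surgery \emph{must} go through. In the paper's proof this case is handled explicitly and constructively: one removes the whole subdisc of $\Delta$ bounded by $e$ and the subarc of $P\cup Q$ through the corner $P\cap Q$, collapses the foliation in $T\cup T'$, and \emph{replaces} the corner $P\cap Q$ by the identified point $u=v$. The corners of $\Delta$ are not invariant under the surgeries; they move. (Identifying two of the \emph{corners} $p,q,r$ would indeed produce a point in the triple intersection, but the argument never has to rule this out: when an endpoint of $e$ or of $\delta$ is a corner, the surgery still makes sense with the corner being absorbed.) By contrast, in Lemma~\ref{lem:min surface}, pairwise disjointness of the fixed sets is used to \emph{exclude} certain subcases; here the triple intersection being empty serves to show the corners remain distinct, but does not block surgeries.

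Second, there are boundary cases you do not describe and your ``closed leaves are immediately ruled out'' remark does not cover them. The paper handles separately the case $e\subset\partial\Delta$ with $\psi(e)$ a vertex (remove $e$ and its single triangle $T$, identify the remaining two sides of $T$), and, when an interior edge $e$ or the path $\delta$ has both endpoints on $\partial\Delta$, the surgery is not a pure ``collapse the foliation'' move: removing $T\cup T'$ alone would change the topology of a disc, so one must also excise the bounded subdisc cut off by $e$ (resp.\ $\delta$) and an arc of $\partial\Delta$. Your summary (``remove the triangle(s), collapse a foliation, optionally remove a subdisc $B$ bounded by a loop'') captures the sphere surgeries but not these boundary ones, and the mischaracterised obstacle means the cases that actually require new work go unaddressed. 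You should replace the ``forbidden identification'' framing by the correct bookkeeping of how $P,Q,R$ and their corners change under each surgery.
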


\begin{proof}
Since $f,g,h$ act without inversions, their fixed point sets are subcomplexes. Thus an admissible triangulation of $\Delta$ exists by Theorem~\ref{thm:Z}. Suppose now that $\Delta$ has minimal area among admissible triangulations w.r.t.\ $(f,g,h)$.

We first prove that $\psi$ is nondegenerate. Indeed, suppose that there is an edge $e\subset \Delta$ with $\psi(e)$ a vertex, and let $u,v$ be the endpoints of $e$. If $e$ lies in $\partial \Delta$, then let $T$ be the triangle containing $e$. We then remove $e$ and $T$ from $\Delta$ and we identify the two remaining sides of $T$. This decreases the area of $\Delta$, which contradicts minimality.

Suppose then that $e$ is contained in two triangles $T$ and $T'$. If $u=v$,
then let $B\subset \Delta$ be the open disc bounded by $e$. Let $T$ be the triangle of $\Delta$ adjacent to~$e$ outside~$B$, and let $t,t'$ be the edges of $T$ distinct from $e$. We remove $T\cup B$ from $\Delta$ and glue along $t$ and $t'$. This does not change the homeomorphism type of $\Delta$, and decreases its area, which is a contradiction.

If $u\neq v$ and $T=T'$, we land back in the case $u=v$. Otherwise, we wish to remove $T\cup T'$, or more precisely to collapse intervals of the foliation in $T\cup T'$ parallel to~$e$. Unless $e$ has both endpoints on $\partial \Delta$, this does not change the homeomorphism type of~$\Delta$ (see Figure~\ref{fig:L62}, left). If both $u$ and $v$ are on the path $P$ (resp.\ $Q,R$), then together with $T$ and~$T'$ we remove the entire disc bounded by $e$ and a subpath of $P$ (resp.\ $Q,R$), and we identify the two remaining edges of $T$ or $T'$ (see Figure~\ref{fig:L62}, right). If the endpoints of $e$ lie on two distinct paths, say $P$ and $Q$, then we do the same using a subpath of $P\cup Q$. The vertex $P\cap Q$ is replaced here by $u$ identified with $v$.

\begin{figure}[h!]
	\begin{center}
		\includegraphics[scale=0.6]{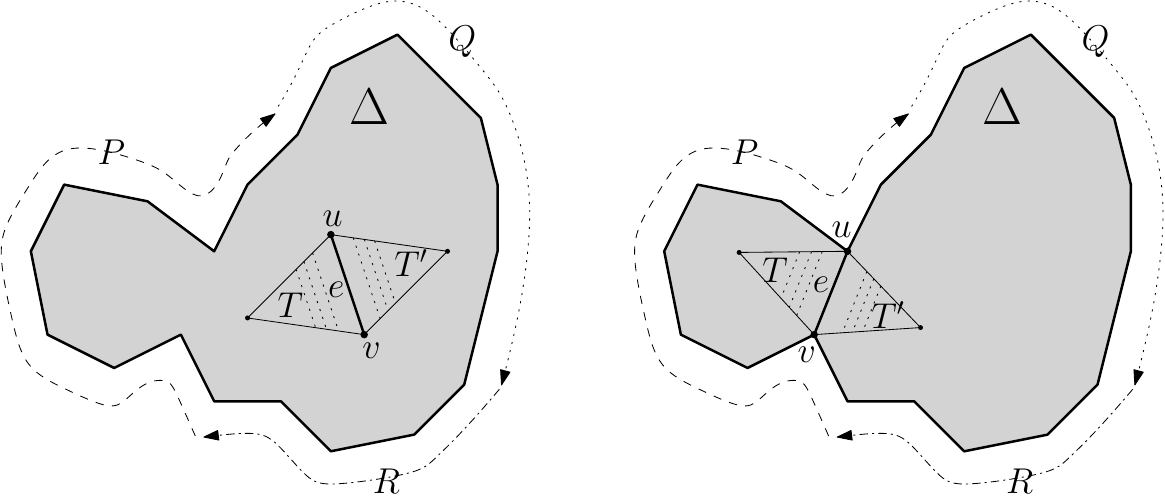}
	\end{center}
	\caption{Proof of Lemma~\ref{lem:min surface2}.}
	\label{fig:L62}
\end{figure}

Finally, we prove that $\psi$ is a near-immersion. Suppose that there is an edge $e\subset \Delta$ with midpoint $m$ where $\psi$ is not a local embedding. Let $T,T'\subset \Delta$ be the triangles containing $e$ and let $y,y'$ be the vertices opposite to $e$ in $T,T'$. We have $T\neq T'$, since otherwise $\psi$ collapses the edge of $T$ distinct from $e$ to a vertex. Let $\delta$ be the path that is the union of line segments $ym\subset T$ and $my'\subset T'$.

If $y=y'$, then let $B\subset \Delta$ be the open disc bounded by $\delta$ and let $t,t'$ be the edges of $T,T'$ outside $B$.
We can remove $T\cup T'\cup B$ and glue along $t,t'$ to decrease the area of $\Delta$, which is a contradiction.

If $y\neq y'$, then we collapse intervals of the appropriate foliation in $T\cup T'$. Unless $\delta$ has both endpoints on $\partial \Delta$, this does not change the homeomorphism type of~$\Delta$.
If both endpoints of $\delta$ are on the path $P$ (resp.\ $Q,R$), then together with $T$ and $T'$ we remove the entire disc bounded by $\delta$ and a subpath of~$P$ (resp.\ $Q,R$), and we identify the two remaining edges of $T,T'$. If the endpoints of $\delta$ lie on two distinct paths, say $P$ and $Q$, then we do the same using a subpath of $P\cup Q$ ($P\cap Q$ is replaced here by $y$ identified with $y'$).
\end{proof}

\begin{lem}
\label{lem:discsphere}
Let $f,g,h$ be automorphisms of a $\mathrm{CAT}(0)$ triangle complex~$X$. Suppose that we have an admissible triangulation of $\Delta$ w.r.t.\ $(f,g,h)$ with $\psi$ a near-immersion and satisfying the following property.
\begin{description}
\item[($\star$)]
 For any edge $e$ of $P$ (resp.\ $Q,R$), the triangle containing $e$ is not mapped by $\psi$ to $\mathrm{Fix}(f)$ (resp.\ $\mathrm{Fix}(g),\mathrm{Fix}(h)$).
\end{description}
Then for $f'=h\circ f, g'=h\circ g$ we have an equivariant triangulation of $\widetilde S$ w.r.t.\ $(f',g')$ with $\phi$ a near-immersion.
\end{lem}
\begin{proof}
We label the vertices of $\Delta$ by $p=P\cap R,q=Q\cap R, r=P\cap Q$. Let $D$ be obtained from $\Delta$ by attaching a second copy of $\Delta$ along the side $R$. Extend $\psi$ to a map from $D$ to $X$ defined as $h\circ \psi$ on the attached copy of $\Delta$.

Denote $\alpha=\psi|_P, \beta=\psi|_Q$ to be directed paths starting at $\psi(r)$. Then the restriction of $\psi$ to $\partial D$ is $\alpha^{-1}\beta g'(\beta^{-1})f'(\alpha)$. Glueing in~$D$ the parts of $\partial D$ that are the domains of $\alpha$ and $f'(\alpha)$, and the parts of $\partial D$ that are the domains of $\beta$ and $g'(\beta)$, we obtain a template two-sphere $S$ from Definition~\ref{def:equivariant} (as in the bottom of Figure~\ref{fig:sphere}).

Let $\phi_*\colon \pi_1(S-\{p,q,r\},z)\to \langle f', g'\rangle$ be the homomorphism mapping $f_S$ to $f'$ and $g_S$ to $g'$. We extend $\psi\colon D\to X$ to $\phi\colon \widetilde{S}\to X$ by defining, for each $w\in \langle f_S, g_S\rangle$, the restriction of $\phi$ to $wD\subset \widetilde{S}$ to be $\phi_*(w)\circ \psi \circ w^{-1}$. This forms an equivariant triangulation of $\widetilde S$ w.r.t.\ $(f',g')$, with the equivariant map $\phi \colon \widetilde S \to X$.

Since $\phi$ is equivariant, to prove that it is a near-immersion, it suffices to justify that it is a local embedding at the midpoint of any edge $e$ in the interior of $D$, or in $P$ or $Q$. If $e$ lies in the interior of $D$ but not in $R$, then this follows from the assumption that $\psi$ is a near-immersion. If $e$ lies in $R$ and is contained in a triangle~$T$ of $\Delta$, then the two triangles of $D$ containing $e$ are send by $\phi$ to $\psi(T)$ and $h\psi(T)$. By property {\bf ($\star$)}, we have $h\psi(T)\neq \psi(T)$ and so $\phi$ is a local embedding at the midpoint of $e$ as well. Finally, if $e$ lies in, say, $P$, and is contained in a triangle~$T$ of $\Delta$, then the two triangles of $\widetilde S$ containing $e$ are $T$ and the image under~$f^{-1}_S$ of the second copy of~$T$ in $D$. These two triangles are sent by $\phi$ to $\psi(T)$ and $(f')^{-1}h\psi(T)=f^{-1}\psi(T)$. By property {\bf ($\star$)}, we have $f^{-1}\psi(T)\neq \psi(T)$ and so $\phi$ is a local embedding at the midpoint of $e$ as before.
\end{proof}

\begin{proof}[Proof of Proposition~\ref{prop:triples}]
After possibly passing to the barycentric subdivision of $X$ (which obviously preserves the conditions~(i) and~(ii) of Theorem~\ref{thm:torsion}, and preserves condition~(iii) by Lemma~\ref{lem:barycentric_preserves_rational}), we can assume that $f,g,h$ act without inversions. Suppose that $\mathrm{Fix}(f)\cap\mathrm{Fix}(g)\cap\mathrm{Fix}(h)=\emptyset$. Then by Lemma~\ref{lem:min surface2}, there is an admissible triangulation of $\Delta$ with $\psi$ a near-immersion.
Moreover, by possibly passing to a subcomplex of $\Delta$, we can assume that $\Delta$ satisfies property ($\star$).

By Lemma~\ref{lem:discsphere},  for $f'=h\circ f, g'=h\circ g$ we have an equivariant triangulation of~$\widetilde S$ w.r.t.\ $(f',g')$ with $\phi$ a near-immersion. Thus by Proposition~\ref{prop:loxodromic}, $\langle f',g'\rangle$ contains an element {\kol with no fixed point in $X$}.
\end{proof}

\begin{proof}[Proof of Theorem~\ref{thm:torsion}]
Suppose by contradiction that for each $f\in G$ the set $\mathrm{Fix}(f)$ is nonempty. We will prove that for any finite set of elements $f_1,\ldots, f_n\in G$ the intersection $\mathrm{Fix}(f_1)\cap \cdots \cap \mathrm{Fix}(f_n)$ is nonempty. The case $n=2$ follows from Proposition~\ref{prop:pairs}. Consequently, the case $n=3$ follows from Proposition~\ref{prop:triples}.
Since the fixed point sets are convex, the cases $n\geq 4$ follow from Helly's Theorem \cite[Thm~1.1]{I}. Setting $f_1,\ldots, f_n$ to be the generators of $G$ we obtain that $G$ fixes a point of $X$, which is a contradiction.
\end{proof}

\section{Irrational loops have diameter $>\pi$}
\label{sec:graph}
In this section we prove Theorem~\ref{thm:graph+}.

The key ingredient in the proof is a reformulation of a theorem of Dehn~\cite{Dehn}, which states that a rectangle can be tiled by finitely many squares if and only if its sides are commensurable.

Let $(X,\mu_X)$, $(Y,\mu_Y)$ be measure spaces of finite measure. We will write $\mu$ instead of $\mu_X$ and $\mu_Y$ for brevity. We say that a pair $(A,B)$, such that $A \subseteq X$, $B \subseteq Y$ are measurable, is a \emph{rectangle in $X \times Y$} with \emph{side lengths $\mu(A)$ and $\mu(B)$}. A rectangle is a \emph{square} if its side lengths are equal.
We say that a  collection  $(A_1,B_1),(A_2,B_2), \ldots, (A_k,B_k)$ of rectangles is a \emph{rectangle tiling of $X \times Y$} if
\begin{itemize}
	\item $\bigcup_{i=1}^{k}A_i \times B_i = X \times Y$, and
	\item $\mu(A_i \cap A_j)\mu(B_i \cap B_j)=0$ for $1 \leq i < j \leq k$.
\end{itemize}
We say that a rectangle tiling is \emph{a square tiling} if it consists of squares.

\begin{thm}\label{thm:Dehn}
		Let $(A_1,B_1),$ $(A_2,B_2), \ldots, (A_k,B_k)$ be a square tiling of $X \times Y$. Then $\mu(X)$, $\mu(Y)$, $\mu(A_1),\ldots,\mu(A_k)$ are all commensurable.
\end{thm}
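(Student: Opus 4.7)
The plan is to adapt Dehn's classical $\Q$-valuation trick to the measure-theoretic setting. First I would pass to a finite combinatorial model. Let $X_1,\ldots,X_m\subseteq X$ be the atoms of the finite Boolean algebra generated by $A_1,\ldots,A_k$, and let $Y_1,\ldots,Y_n\subseteq Y$ be the atoms generated by $B_1,\ldots,B_k$. Let $V\subset\R$ be the finite-dimensional $\Q$-vector subspace spanned by the numbers $\mu(X_j)$ and $\mu(Y_k)$; by construction $V$ contains $\mu(X)$, $\mu(Y)$, and $\mu(A_i)=\mu(B_i)$ for every $i$. The overlap hypothesis $\mu(A_i\cap A_{i'})\mu(B_i\cap B_{i'})=0$ for $i\neq i'$ then translates, modulo null sets, into the following combinatorial statement: for each pair $(j,k)$ with $\mu(X_j)\mu(Y_k)>0$ there is a \emph{unique} index $i(j,k)$ such that $X_j\subseteq A_{i(j,k)}$ and $Y_k\subseteq B_{i(j,k)}$ (both mod null).

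The central step is the identity
$$\phi(\mu(X))\,\phi(\mu(Y))\;=\;\sum_{i=1}^{k}\phi(\mu(A_i))^{2},$$
valid for every $\Q$-linear map $\phi\colon V\to\R$. I would prove it by expanding each side in the atoms. By $\Q$-linearity the left-hand side equals $\sum_{j,k}\phi(\mu(X_j))\,\phi(\mu(Y_k))$. The right-hand side, using $\mu(A_i)=\mu(B_i)$ together with the decompositions $\mu(A_i)=\sum_{X_j\subseteq A_i}\mu(X_j)$ and $\mu(B_i)=\sum_{Y_k\subseteq B_i}\mu(Y_k)$, becomes $\sum_i \sum_{X_j\subseteq A_i,\,Y_k\subseteq B_i}\phi(\mu(X_j))\,\phi(\mu(Y_k))$. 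The two sums agree because the pairs $(j,k)$ with $\mu(X_j)\mu(Y_k)>0$ are partitioned according to $i(j,k)$, while pairs of vanishing measure contribute zero to both sides thanks to $\phi(0)=0$. In particular the right-hand side is always \emph{nonnegative}.

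To finish, assume for contradiction that $\mu(X),\mu(Y),\mu(A_1),\ldots,\mu(A_k)$ are not all commensurable; we may assume $\mu(X),\mu(Y)>0$, since otherwise all measures vanish and the conclusion is trivial. Then $\dim_\Q V\ge 2$. If $\mu(X)$ and $\mu(Y)$ are already incommensurable, I extend $\{\mu(X),\mu(Y)\}$ to a $\Q$-basis of $V$ and let $\phi$ be the functional sending $\mu(X)\mapsto 1$, $\mu(Y)\mapsto -1$, and every other basis vector to $0$; the identity then reads $-1=(\text{nonnegative})$, a contradiction. Otherwise $\mu(Y)=r\mu(X)$ for some positive rational $r$ while some $\mu(A_{i_0})$ is incommensurable with $\mu(X)$; I extend $\{\mu(X),\mu(A_{i_0})\}$ to a basis and define $\phi(\mu(X))=1$, $\phi(\mu(A_{i_0}))=N$, and $\phi=0$ on the remaining basis vectors. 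Then the left-hand side equals $r$ while the right-hand side is at least $\phi(\mu(A_{i_0}))^{2}=N^{2}$, which exceeds $r$ for $N$ sufficiently large. The main obstacle is bookkeeping the null sets needed to upgrade the measure-zero overlap hypothesis to the unique-assignment combinatorial statement on atoms; once that is in hand the rest is linear algebra.
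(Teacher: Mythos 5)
Your proposal is correct and follows essentially the same route as the paper: reduce to a finite model (your atoms of the Boolean algebra are the paper's equivalence classes), form the finite-dimensional $\Q$-span $V$ of the atom measures to sidestep the Axiom of Choice, and evaluate a $\Q$-linear functional on the identity $\phi(\mu(X))\phi(\mu(Y))=\sum_i\phi(\mu(A_i))^2$. The only cosmetic difference is in your second case, where you take $\phi(\mu(X))=1$, $\phi(\mu(A_{i_0}))=N$ and let $N\to\infty$; the paper achieves the same contradiction more directly with $g(\mu(X))=0$, $g(\mu(A_j))=1$, making the left-hand side identically zero.
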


While it is not hard to see that Theorem~\ref{thm:Dehn} is equivalent to the theorem of Dehn stated above we include a proof both for completeness, and because we will need a technical modification of the result. The proof we give is essentially the one used by Hadwiger~\cite{Hadwiger} to prove a multidimensional generalisation of Dehn's theorem, except that we follow the presentation from Aigner and Ziegler~\cite[Chap 29]{AZ}, which avoids the reliance on the existence of Hamel basis of $\bb{R}$ over $\bb{Q}$ and thus the dependence on the Axiom of Choice.

\begin{proof}[Proof of Theorem~\ref{thm:Dehn}]
	We start by showing that it suffices to consider the case when $X$ and $Y$ are finite. The reduction is fairly straightforward.
	 Define an equivalence relation $\sim$ on $X$, by setting $x_1 \sim x_2$ if for every $1 \leq i \leq k$ either $\{x_1,x_2\} \in A_i$ or $\{x_1,x_2\} \cap A_i =  \emptyset$. Clearly there are finitely many equivalence classes of elements of $X$ with respect to this relation, and we can identify the elements of $X$ lying in the same equivalence class. Thus we can assume that $X$ and, symmetrically, $Y$ are finite. Further, we may assume without loss of generality that every element of $X$ and $Y$ has a positive measure. It follows that for every $x \in X$ and $y \in Y$ there exists a unique index $i$ such that $x \in A_i$ and $y \in B_i$.
	
	 Let $V$ be a vector space over $\bb{Q}$ spanned by $\{\mu(x)\}_{x \in X} \cup \{\mu(y)\}_{y \in Y}$.
  Suppose first that $\mu(X)$ and $\mu(Y)$ are not commensurable. Then there exists a linear function $f: V \to \bb{Q}$ such that $f(\mu(X))=1$ and $f(\mu(Y))=-1$. By linearity we have
   \begin{align}\label{e:Dehn}
   	 -1 &= f(\mu(X))f(\mu(Y))= \sum_{x \in X}f(\mu(x))\sum_{y \in Y}f(\mu(y)) \notag \\ &=\sum_{i=1}^k f(\mu(A_i))f(\mu(B_i)) =  \sum_{i=1}^k f^2(\mu(A_i)) \geq 0,
   \end{align}	
	 yielding the desired contradiction.	
	
	 Suppose finally that $\mu(A_j)$ is not commensurable with $\mu(X)$ for some $0 \leq j \leq k$. Define a linear function $g: V \to \bb{Q}$ such that $g(\mu(X))=0$ and $g(\mu(A_j))=1$. Substituting $g$ instead of $f$ in (\ref{e:Dehn}) yields
$$
	0 = g(\mu(X))g(\mu(Y))=\sum_{i=1}^k g^2(\mu(A_i)) \geq g^2(\mu(A_j)) =1,
$$
 a contradiction.
\end{proof}

We will also need a technical variant of Theorem~\ref{thm:Dehn}.

\begin{lem}\label{lem:Dehn+}
	Let $(A_1,B_1),$ $(A_2,B_2), \ldots, (A_k,B_k)$ be a rectangle  tiling of $X \times Y$, such that for some $q,r \in \bb{R}_+$ and $a \in \bb{Q}$ we have
	\begin{itemize}
		\item $\mu(X)=2q+ar$, $\mu(Y)=q + (a/2-1)r$,
		\item  $(A_1,B_1)$ and $(A_2,B_2)$ are rectangles with sides $r$ and $q+r$,
		\item $(A_3,B_3), \ldots, (A_k,B_k)$ are squares,
		\item $\mu(A_3),\ldots,\mu(A_j)$ are commensurable with $r$ for some $3 \leq j \leq k$, and $\sum_{i=3}^j\mu^2(A_j) > (a-4)r^2$
	\end{itemize}	
	Then $q$ and $r$ are commensurable.
\end{lem}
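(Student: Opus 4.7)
The plan is to adapt the proof of Theorem~\ref{thm:Dehn}. I would first reduce to the case where $X$ and $Y$ are finite by the same quotient argument used at the start of that proof; this preserves all hypotheses, and we may further assume every atom has positive measure so that each point of $X \times Y$ lies in a unique tile.

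Assume for contradiction that $q$ and $r$ are incommensurable, hence $\bb{Q}$-linearly independent. Let $V$ be the $\bb{Q}$-vector space spanned by the atomic measures of $X$ and $Y$; it contains $q$, $r$, and every $\mu(A_i), \mu(B_i)$. Choose a $\bb{Q}$-linear $f\colon V \to \bb{Q}$ with $f(r) = 1$ and $f(q) = 1 - a/2$. Then $f(\mu(X)) = 2f(q) + af(r) = 2$ and $f(\mu(Y)) = f(q) + (a/2 - 1)f(r) = 0$, so the left-hand side of the area identity
\begin{equation*}
f(\mu(X))\,f(\mu(Y)) \;=\; \sum_{i=1}^{k} f(\mu(A_i))\,f(\mu(B_i))
\end{equation*}
vanishes. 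The right-hand side equals $2 f(r)f(q+r) + \sum_{i=3}^{k} f(s_i)^2$, where $s_i := \mu(A_i) = \mu(B_i)$ for $i \geq 3$. Since $f(r)f(q+r) = 2 - a/2$, this forces $\sum_{i=3}^{k} f(s_i)^2 = a - 4$.

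For $3 \leq i \leq j$ the commensurability $s_i = q_i r$ with $q_i \in \bb{Q}$ gives $f(s_i)^2 = s_i^2/r^2$, so the strict inequality in the hypothesis yields
\begin{equation*}
\sum_{i=3}^{j} f(s_i)^2 \;=\; \frac{1}{r^2}\sum_{i=3}^{j}\mu(A_i)^2 \;>\; a - 4.
\end{equation*}
Combined with $f(s_i)^2 \geq 0$ for $i > j$, this contradicts the preceding equality.

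The only creative step is the choice $f(q) = 1 - a/2$: it is designed precisely so that $f(\mu(Y)) = 0$ (killing the left-hand side of the area identity) while the two non-square rectangles contribute exactly $4 - a$, so that the strict inequality in the hypothesis on the commensurable squares translates into a strict contradiction. Everything else mirrors the proof of Theorem~\ref{thm:Dehn} verbatim, so I expect no further obstacles.
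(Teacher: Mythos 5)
Your proof is correct and follows essentially the same argument as the paper: you reduce to finite $X,Y$ exactly as in Theorem~\ref{thm:Dehn}, then choose a $\bb{Q}$-linear functional $f$ that vanishes on $\mu(Y)$ and evaluates to $2-a/2$ on the two non-square rectangles, and you use the commensurability hypothesis to lower-bound the contribution of the squares $A_3,\ldots,A_j$ and force a strict inequality contradicting the area identity. The only superficial difference is the sign of $f$ (you take $f(r)=1$, $f(q)=1-a/2$, the paper takes $f(r)=-1$, $f(q)=a/2-1$); since everything relevant is squared or multiplied pairwise, this is immaterial.
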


\begin{proof}
The proof parallels the proof of Theorem~\ref{thm:Dehn} above with minor modification to the calculation (\ref{e:Dehn}). We
define the vector space $V$ as in the proof of Theorem~\ref{thm:Dehn}, and assume for a contradiction that $q$ and $r$ are not commensurable. Then there exists a  linear function $f: V \to \bb{Q}$ such that $f(q)=a/2-1$ and $f(r)=-1$. Thus $f(\mu(X))= -2$, $f(\mu(Y))= 0$, and $f(\mu(A_i))f(\mu(B_i))=2-a/2$ for $i=1,2$, while $$\sum_{i=3}^j f(\mu(A_j))f(\mu(B_j)) = \sum_{i=3}^j\left(\frac{\mu(A_j)}{r}\right)^2 f^2(r)>  a-4.$$
Combining these estimates as in (\ref{e:Dehn}) we obtain
\begin{align*}
0 &= f(\mu(X))f(\mu(Y)) = \sum_{i=1}^{k}f(\mu(A_i))f(\mu(B_i)) \\ &= \sum_{i=1}^{2}f(\mu(A_i))f(\mu(B_i)) +
\sum_{i=3}^{j}f(\mu(A_i))f(\mu(B_i)) + \sum_{i=j+1}^{k}f^2(\mu(A_i)) \\ &> (4-a)+(a-4) =0,
\end{align*}
the desired contradiction.
\end{proof}

Let us now outline the proof of Theorem~\ref{thm:graph+}. As a first step (Claim~\ref{c:cycle} below) we prove that every cycle $C$ of $\Gamma'$ has length commensurable with $\pi$. We do so by examining the structure of shortest paths in $\Gamma$ between the pairs of points in~$C$. The paths joining the pairs of points which lie at distance greater than~$\pi$ in~$C$ must take ``shortcuts'' which we refer to as \emph{chords}. We construct a square tiling of a cylinder, where the squares correspond to the chords and apply Theorem~\ref{thm:Dehn}.

Next we prove in Claim~\ref{c:dumbbell} that certain paths of $\Gamma'$, called bars, also have length commensurable with $\pi$. We say that a path $B$ in $\Gamma'$ with endpoints $u$ and $v$ is a \emph{bar in $\Gamma'$ joining cycles $C_1$ and $C_2$}, if $C_1$ and $C_2$ are disjoint cycles of $\Gamma'$, and, moreover, $u \in C_1$, $v \in C_2$ and $C_1$ and $C_2$ are otherwise disjoint from $B$. See Figure~\ref{fig:chord}(b). Note that for every bar $B$ there exists a closed edge-path immersed in~$\Gamma'$ that traverses~$B$ twice and each of $C_i$ once. The existence of such an immersion allows us to adapt the argument we used for cycles to bars, but there are multiple technical hurdles to overcome, making this the most technical part of the proof. It is here, in particular, that we use Lemma~\ref{lem:Dehn+} and we additionally need extra information about the structure of chords of a cycle and between pairs of cycles, which we obtain in Claims~\ref{c:cycle} and~\ref{c:bicycle}.

To unify the argument for cycles and bars we start the proof of Theorem~\ref{thm:graph+} in the setting of Corollary~\ref{thm:graph} by considering closed edge-paths immersed in $\Gamma$.

Finally, in Claim~\ref{c:reduction}, we show that every segment  of $\Gamma'$ has length commensurable with $\pi$ by proving that it can be expressed as a rational linear combination of bars and cycles.

\begin{proof}[Proof of Theorem~\ref{thm:graph+}] We follow the steps outlined above, starting by considering immersed circles in $\Gamma'$.
	
	 Let $\gamma: C \to \Gamma'$ be a local isometry mapping a circle $C$ of length $l$ into $\Gamma'$.  We identify $C$ with $\R/l\Z$. This identification defines a natural $\R$-action on $C$.
	
	For $x,y \in C$, let $d(x,y)$ denote the distance between points $\gamma(x)$ and $\gamma(y)$ in $\Gamma$. As the girth of $\Gamma$ is at least $2\pi$, if  $0<d(x,y) < \pi$ then there exists a unique path from $\gamma(x)$ to $\gamma(y)$ in $\Gamma$ of length $d(x,y)$. We denote such a path by $P_{xy}$.
	
	Let $S$ be the set of points {\kol $s$} in $C$ such that $\gamma(s)$ has degree at least three in~$\Gamma$. For $s,t \in S$ we say that $(s,t)$ is a \emph{chord of $\gamma$ of length $d_0$} if $0<d_0=d(s,t) < \pi$, and $P_{st}$
is disjoint  from $\gamma([s-\eps,s) \cup (s,s+\eps] \cup [t-\eps,t) \cup (t,t+\eps])$ for some $\eps >0$. Note that our chords are directed, that is we distinguish between the chords $(s,t)$ and $(t,s)$.
	
		   \begin{figure}[h!]
		\begin{center}
			\includegraphics[scale=0.82]{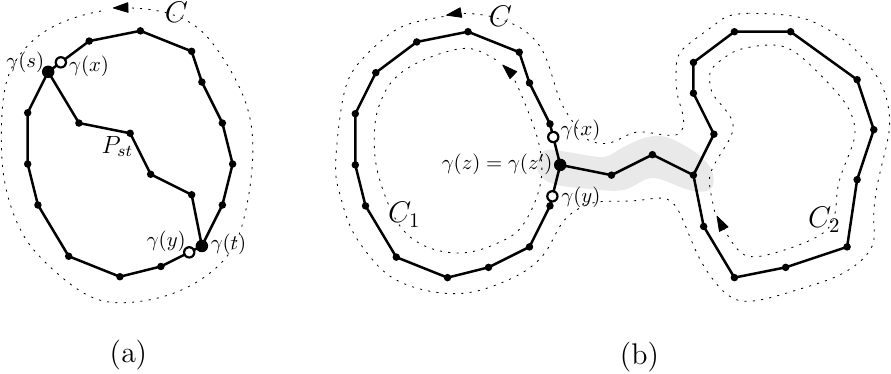}
		\end{center}
		\caption{(a) The pair $(x,y)$ uses the chord $(s,t)$. \newline			
			(b) A bar joining cycles $C_1$ and $C_2$ is shaded. The pair $(x,y)$
			is spliced in $C$. }
		\label{fig:chord}
	\end{figure}

	For $x,y \in C$ we say that $(x,y)$ \emph {uses a chord $(s,t)$} if there exists a path $P$ from $\gamma(x)$ to $\gamma(y)$ in $\Gamma$ of length $d(x,y)$ such that $P$ is obtained by concatenating paths $\gamma([x,s])$, $P_{st}$ and $\gamma([t,y])$, where we denote by $[a,b]$ for $a,b \in C$ the shortest of the two intervals in $C$ with endpoints $a$ and $b$. See Figure~\ref{fig:chord}(a). Clearly, $(x,y)$ uses a chord $(s,t)$ of length $d_0$ if and only if there exist $\sigma,\tau \in \R$ such that $x = s+\sigma$, $y =t + \tau$, and $|\sigma|+|\tau| \leq \pi - d_0$. Moreover, if the last inequality is strict, then $(s,t)$ is the unique chord used by $(x,y)$.
	
	Note further that for every $(x,y) \in C^2$ if $(x,y)$ uses no chord, then there exist $z,z' \in C$ such that $\gamma(z)=\gamma(z')$, and there exists a path from $\gamma(x)$ to $\gamma(y)$ in $\Gamma$ of length $d(x,y)$ obtained by concatenating paths $\gamma([x,z])$  and $\gamma([z',y])$. Moreover, if $d(x,y)<\pi$ then the converse also holds. If $(x,y)$ uses no chord and the distance between $x$ and $y$ in $C$ is larger than $\pi$ then we say that $(x,y)$ is a \emph{spliced pair}, see Figure~\ref{fig:chord}(b). We denote the set of spliced pairs by $Spl(\gamma)$.
	
	We now describe the square tiling to which we apply Theorem~\ref{thm:Dehn} and Lemma~\ref{lem:Dehn+}. Let $R=R(\gamma)\subset C^2$ be the annulus consisting of points $(x,y)$ with $x$ and $y$ at distance at least $\pi$ in $C$. Note that $R$ is isometric to the product of a circle of length $l\sqrt{2}$ and an interval of length $l\frac{\sqrt{2}}{2}-\pi\sqrt{2}$. Let $\mc{K}=\mc{K}(\gamma)$ denote the set of all chords, and let $S(K)$ for $K \in \mc{K}$ denote the set of pairs $(x,y) \in C^2$ which use~$K$. For a chord $K=(s,t)$ of length $d_0$, let $z=\pi - d_0$. As noted above, $$S(K) = \{(s+\sigma,t + \tau ) \: | \: -z \leq \sigma + \tau \leq z,  -z \leq \tau - \sigma \leq z \},$$ which is a square in $R$ with side length $z\sqrt 2$ and one side pair parallel to $\partial R$. Moreover, for distinct $K$ the squares $S(K)$ have disjoint interiors. Finally note that $Spl(\gamma)=\mathrm{int}(R)-\bigcup_{K \in \mc{K}}S(K)$ by definition.
	
	We will also use some of the terminology introduced above when considering a subgraph $\Lambda$ of $\Gamma$, rather than an immersion. For vertices $s,t$ of $\Lambda$ we say that $(s,t)$ is a \emph{chord of $\Lambda$ of length $d_0$ and area $2(\pi-d_0)^2$} if
	$0<d_0=d_{\Gamma}(s,t) < \pi$ and the first and last edge of the unique path $P$ from $s$ to $t$ in $\Gamma$ of length $d_0$ do not lie in~$\Lambda$.
Note that if $\gamma:C \to \Gamma$ is an immersion as above, and $s,t \in C$ are such that $(\gamma(s),\gamma(t))$ is a chord of $\gamma(C)$ of length $d_0$ then $(s,t)$ is a chord of $\gamma$ of length $d_0$. If $\gamma$ is injective then the converse also holds.
	
	We are now ready for the first major claim, which in particular establishes the theorem in the case when $\Gamma'$ is a cycle.
	
	\begin{claim}\label{c:cycle}
		Let $C$ be a cycle in $\Gamma'$ of length $l$. Then
		\begin{description}
			\item[a)] $l$ is commensurable with $\pi$,
			\item[b)] the length of every chord of $C$ is commensurable with $\pi$,
			\item[c)] for every point $p \in C$ the total area of chords of $C$ with both endpoints in $C-\{p\}$ is at least $2\pi(l-2\pi)$.
		\end{description}
	\end{claim}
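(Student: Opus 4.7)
The plan is to apply the square-tiling machinery above to $C$ itself, viewed via the injective inclusion $\gamma\colon C\hookrightarrow \Gamma'$. First I would observe that since $\gamma$ is injective there are no spliced pairs (a spliced pair requires two distinct preimages under $\gamma$), so $Spl(\gamma)=\varnothing$ and the squares $\{S(K)\}_{K\in\mc{K}}$ tile $R$ up to measure zero. A short local argument will show that every chord's endpoints must be vertices of $\Gamma$ lying on $C$: at an interior point of an edge of $C$ the graph $\Gamma$ is locally one-dimensional, so any outgoing path would run along $C$, contradicting the near-endpoint disjointness in the chord definition. Hence $\mc{K}$ is finite and Theorem~\ref{thm:Dehn} is applicable.

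Next, for (a) and (b), the plan is to invoke Theorem~\ref{thm:Dehn} on the tiling of $R\cong S^1\times I$ with $|S^1|=l\sqrt{2}$ and $|I|=(l-2\pi)/\sqrt{2}$. Since the squares have sides parallel to the two factors and each has side length $z_K\sqrt{2}$, the theorem gives pairwise commensurability of $l\sqrt{2}$, $(l-2\pi)/\sqrt{2}$, and every $z_K\sqrt{2}$. Clearing the $\sqrt{2}$, commensurability of $l$ with $l-2\pi$ will yield (a), and commensurability of each $z_K$ with $l$, and hence with $\pi$, will yield (b) via $d_0=\pi-z_K$.

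For (c), I would start from the tiling identity $\sum_{K\in\mc{K}}2z_K^2 = l(l-2\pi)$, fix $p\in C$, and bound the contribution of chords incident to $p$. For any chord $K=(p,t)$, the vertical diagonal $\{p\}\times[t-z_K,t+z_K]$ of $S(K)$ lies inside the vertical slice $(\{p\}\times C)\cap R$, an arc of length $l-2\pi$. Disjointness of square interiors forces these diagonals to be interior-disjoint, yielding both the collective bound $\sum_{K=(p,\cdot)}2z_K\le l-2\pi$ and the individual bound $2z_K\le l-2\pi$. Combining them,
$$\sum_{K=(p,\cdot)}2z_K^2 \;\le\; \tfrac{l-2\pi}{2}\sum_{K=(p,\cdot)}2z_K \;\le\; \tfrac{(l-2\pi)^2}{2},$$
and a symmetric estimate for chords $K=(\cdot,p)$ gives a matching bound, so chords incident to $p$ contribute at most $(l-2\pi)^2$; subtracting from $l(l-2\pi)$ then gives (c). The main obstacle is the sharpness of the bound in (c): one must notice that beyond the sum bound on $2z_K$, each individual $2z_K$ is itself trapped by the same segment length, and it is the combination of these two inequalities that squeezes the endpoint contribution from the weaker $2\pi(l-2\pi)$ down to the necessary $(l-2\pi)^2$.
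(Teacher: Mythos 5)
Your proposal is correct and follows essentially the same approach as the paper: $Spl(\gamma)=\emptyset$ because $\gamma$ is injective, Theorem~\ref{thm:Dehn} applies to the resulting square tiling of $R(\gamma)$ to give (a) and (b), and the chord-area bound for (c) comes from comparing each square with its intersection with the vertical slice $(\{s\}\times C)\cap R$ of length $l-2\pi$. The only inessential difference is the elementary inequality used in (c): the paper bounds $\sum 2z_i^2\le 2\bigl(\sum z_i\bigr)^2$, while you combine the individual bound $2z_K\le l-2\pi$ with the collective bound $\sum 2z_K\le l-2\pi$; both yield the same estimate $(l-2\pi)^2/2$ for chords starting at $p$.
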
	
	
    \begin{proof} Let $\gamma$ be an isometry from a circle of length $l$ onto $C$. For simplicity of notation we identify the domain of $\gamma$ with $C$.
    	
    	Note that
 $Spl(\gamma)= \emptyset$. Thus  $\mathrm{int}(R(\gamma)) \subseteq \bigcup_{K \in \mc{K}(\gamma)}S(K) \subseteq R(\gamma)$. As $\mc{K}(\gamma)$ is finite, it follows that $R(\gamma) =\bigcup_{K \in \mc{K}(\gamma)}S(K) $, and Theorem~\ref{thm:Dehn} implies that   $l\sqrt{2}$ and $l\frac{\sqrt{2}}{2}-\pi\sqrt{2}$ are commensurable. Thus a) holds, and  Theorem~\ref{thm:Dehn} similarly implies that b) holds.
	
	It remains to establish c). First note that the total area of all chords of $C$ is equal to the area of $R(\gamma)$, which is $l(l-2\pi)$. It remains to upper bound the area of chords of $C$ with at least one end in $p$, i.e.\ chords of $\gamma$ with at least one end in $s = \gamma^{-1}(p)$. Let $K_1=(s,t_1),K_2=(s,t_2),\ldots,K_n=(s,t_n)$ be all the chords starting at $s$.
	Suppose that $K_i$ has length $\pi - z_i$. Then the area of $S(K_i)$ is equal to $2z_i^2 $, and $S(K_i)$ intersects the line $(\{s\} \times C) \cap R$ in an interval of length $2z_i$.  As $(\{s\} \times C) \cap R$ has length $l-2\pi$, we have $\sum_{i=1}^n z_i \leq l/2 -\pi$, and so the sum of the areas of the chords starting at $s$ is at most $2(\sum_{i=1}^n z_i)^2 \leq 2(l/2 -\pi)^2.$ It follows that the area of chords of $C$ with at least one end in $p$, is at most $(l-2\pi)^2$. As $l(l-2\pi) - (l-2\pi)^2 = 2\pi(l-2\pi)$, c) follows. \end{proof}
	
	Next we need to extend Claim~\ref{c:cycle} b) to a pair of disjoint cycles.
	
	\begin{claim}\label{c:bicycle}
		Let $C_1$, $C_2$ be disjoint cycles of $\Gamma'$. Then every chord of $C_1 \cup C_2$ has length commensurable with $\pi$.
	\end{claim}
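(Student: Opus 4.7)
A chord of $C_1 \cup C_2$ with both endpoints in a single cycle is in particular a chord of that single cycle, so by Claim~\ref{c:cycle}(b) its length is commensurable with $\pi$. I therefore focus on a chord $(s,t)$ with $s \in C_1$ and $t \in C_2$, of length $d_0$, and plan to adapt the square-tiling argument of Claim~\ref{c:cycle} to the torus $C_1 \times C_2$.

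First I apply Claim~\ref{c:cycle}(a) to $C_1$ and $C_2$ separately to conclude that their lengths $l_1$ and $l_2$ are both commensurable with $\pi$; in particular $l_1/l_2 \in \Q$. For each chord $K_i = (s_i,t_i)$ of $C_1 \cup C_2$ with $s_i \in C_1$, $t_i \in C_2$ and length $d_i$, let $S(K_i) \subset C_1 \times C_2$ be the set of pairs $(x,y)$ for which the arc-chord-arc path from $x$ to $s_i$ along $C_1$, then $P_{s_i t_i}$, then from $t_i$ to $y$ along $C_2$, realises the distance $d(x,y)$. Under the rotation $(u,v) \mapsto \bigl((u+v)/\sqrt{2},(u-v)/\sqrt{2}\bigr)$ each $S(K_i)$ becomes an axis-aligned square of side $(\pi - d_i)\sqrt{2}$. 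The diamonds cover $C_1 \times C_2$ up to measure zero: given $(x,y) \in C_1 \times C_2$ with $d(x,y) < \pi$, let $P$ be the unique shortest path, let $s \in C_1$ be the endpoint of the maximal initial subpath of $P$ lying in $C_1$, and let $t \in C_2$ be the starting point of the maximal final subpath of $P$ lying in $C_2$. The subpath of $P$ from $s$ to $t$ is the unique shortest $s$-to-$t$ path in $\Gamma$; since $C_1 \cap C_2 = \emptyset$ its first and last edges lie outside $C_1 \cup C_2$, so $(s,t)$ is a chord of $C_1 \cup C_2$ and $(x,y) \in S((s,t))$. Uniqueness of shortest paths shorter than $\pi$ forces the interiors of distinct $S(K_i)$ to be disjoint.

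Rationality of $l_1/l_2$ implies that the image lattice in the rotated plane contains an axis-aligned sublattice with sides commensurable with $\pi\sqrt{2}$. Passing to the corresponding finite cover of the torus yields an axis-aligned rectangle tiled by axis-aligned squares of sides $(\pi - d_i)\sqrt{2}$. Theorem~\ref{thm:Dehn} then forces each $(\pi - d_i)\sqrt{2}$ to be commensurable with the rectangle's sides, hence with $\pi\sqrt{2}$; therefore each $d_i$ is commensurable with $\pi$.

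The key subtlety is in identifying the correct chord $(s,t)$ for each pair $(x,y)$: once $P$ leaves $C_1$ it may re-enter $C_1 \cup C_2$ before definitively reaching $C_2$, and the path $P_{st}$ of the chord used by $(x,y)$ may pass through $C_1 \cup C_2$ vertices in its interior. Choosing $s$ and $t$ via the maximal initial $C_1$-arc and final $C_2$-arc, rather than as the first local transition outside $C_1 \cup C_2$, is what ensures the arc-chord-arc decomposition realises the shortest path and that $(s,t)$ qualifies as a chord of $C_1 \cup C_2$.
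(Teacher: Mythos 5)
Your proof is correct and takes essentially the same approach as the paper: reduce to chords with one endpoint in each cycle, observe the diamonds $S(K)$ tile the torus $C_1\times C_2$, perform the $45^\circ$ coordinate change to make the squares axis-aligned, pass to a finite cover of the torus that is a genuine product of circles with circumferences commensurable with $\pi$, and invoke Theorem~\ref{thm:Dehn}. The only cosmetic difference is the order of operations: the paper first lifts to the $n_1$-fold and $n_2$-fold covers to equalize $l_1=l_2=l$ and then applies the shear $\psi(x,y)=(\gamma_1(x+y),\gamma_2(x-y))$, whereas you rotate first and then select a rational axis-aligned sublattice; these are equivalent.
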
	

    \begin{proof} Let $l_i$ be the length of $C_i$ for $i=1,2$.
    	By Claim~\ref{c:cycle} a) and b) the lengths $l_1$ and $l_2$ are commensurable with $\pi$ and so is the length of every chord of $C_1$ and $C_2$. It remains to establish the claim for chords with one endpoint in $C_1$ and another in~$C_2$.
    	
    	To do so we parallel the proof of Claim~\ref{c:cycle} and the preceding construction of the square tiling. Let $\mc{K}$ be the set of chords $(s,t)$ of $C_1 \cup C_2$ with $s \in C_1$, $t \in C_2$. We say that a pair $(x,y) \in C_1 \times C_2$ \emph{uses a chord  $(s,t) \in \mc{K}$} if there exists a path of length $d_{\Gamma}(x,y)$ from $x$ to $y$ in $\Gamma$ obtained by concatenating a path in $C_1$, the minimal length path from $s$ to $t$, and a path in $C_2$. Let $S(K) \subseteq C_1 \times C_2$ be the set of pairs of points using  the chord $K$ of length $\pi-z$. Then analogously to the earlier observations we have that $S(K)$ is a square with side  length $\sqrt 2 z$, the squares $\{S(K)\}_{K \in \mc{K}}$ cover
    	$C_1 \times C_2$ and have disjoint interiors.

To apply Theorem~\ref{thm:Dehn} we need to transform the tiling  of $C_1 \times C_2$ into a tiling of a product set, where the squares inherit the product structure.
We do it as follows. Let $n_1,n_2$ be positive integers such that $n_1l_1=n_2l_2$. By lifting our tiling of $C_1 \times C_2$ to the product of the $n_1$-fold cover of $C_1$ and the $n_2$-fold cover of $C_2$ we may assume without loss of generality that $C_1$ and $C_2$ have the same length $l$.

  Let $\gamma_1: \bb{R} /l\bb{Z}\to C_1$ and $\gamma_2: \bb{R} /l\bb{Z}\to C_2$ be isometries. Let $\psi: \bb{R} /l\bb{Z}\times \bb{R} /l\bb{Z}\to  C_1 \times C_2$ be defined by $\psi(x,y)=(\gamma_1(x+y),\gamma_2(x-y))$. Consider a chord $K=(s,t) \in \mc{K}$ of length $\pi-z$. Then $\psi^{-1}(S(K))$ consists of two squares of the form $[x-z/2,x+z/2] \times [y-z/2,y+z/2]$, where $(x,y)\in (\bb{R}/l\bb{Z})^2$ is either one of the two pairs satisfying $\gamma_1(x+y)=s,\gamma_2(x{\kol-}y)=t$. Thus the preimage of our tiling of $C_1 \times C_2$ is a square tiling of  $(\bb{R}/l\bb{Z})^2$ satisfying the conditions of Theorem~\ref{thm:Dehn}, and by this theorem if $K \in \mc{K}$ is a chord of length $\pi-z$, then $z$ is commensurable with~$l$. As $l$ is commensurable with $\pi$, the claim follows.
  \end{proof}

We are now ready to establish that the conclusion of the theorem holds for bars.
	
	\begin{claim}\label{c:dumbbell}
		The length of every bar of $\Gamma'$ is commensurable with $\pi$.
		
	\end{claim}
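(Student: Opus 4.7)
The plan is to apply Lemma~\ref{lem:Dehn+} to the chord tiling of the annulus $R(\gamma)$ for a closed edge-path $\gamma : C \to \Gamma'$ of length $L := l_1 + l_2 + 2b$ that traverses $C_1$ once, then the bar $B$, then $C_2$ once, then $B$ in reverse (here $l_i$ denotes the length of $C_i$ and $b$ the length of $B$). Since $l_1$ and $l_2$ are commensurable with $\pi$ by Claim~\ref{c:cycle}(a), it suffices to show that $b$ is commensurable with $\pi$.

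Unlike in Claim~\ref{c:cycle}, $\gamma$ is not injective: for every $s \in (0, b)$ one has $\gamma(l_1 + s) = \gamma(L - s)$, so $Spl(\gamma)$ is nonempty. The geometric heart of the argument is to show that $R(\gamma) \setminus \bigcup_{K \in \mathcal{K}(\gamma)} S(K)$ consists of exactly two congruent rectangles sitting on the two sides of the identification line $\{x + y = L + l_1\}$. After the rotation of coordinates $(x,y)\mapsto((x+y)/\sqrt 2, (y-x)/\sqrt 2)$ that turns the chord squares into axis-aligned squares, these two rectangles have sides $r$ and $q + r$ with $r = \pi\sqrt 2$ and $q = b\sqrt 2$. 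Taking $a := (l_1 + l_2)/\pi$, which is rational by Claim~\ref{c:cycle}(a) and at least $4$ since each $l_i \geq 2\pi$ by the girth hypothesis, the unfolded annulus has dimensions $L\sqrt 2 \times (L - 2\pi)\sqrt 2/2$, which equal $(2q + ar) \times (q + (a/2 - 1)r)$, matching the metric requirements of Lemma~\ref{lem:Dehn+}.

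For the area hypothesis, I would invoke Claims~\ref{c:cycle}(b) and~\ref{c:bicycle}: every chord whose endpoints both lie in $C_1$, both lie in $C_2$, or one in each $C_i$ has length commensurable with $\pi$, so the corresponding square has side commensurable with $r$. Applying Claim~\ref{c:cycle}(c) separately to $C_1$ at $p=u$ and to $C_2$ at $p=v$ supplies chord squares of total area at least $2\pi(l_1 - 2\pi) + 2\pi(l_2 - 2\pi) = (a - 4) r^2$. The required strict inequality $\sum \mu^2(A_i) > (a - 4) r^2$ is then achieved by adjoining any chord square coming from a chord between $C_1$ and $C_2$, whose existence follows from the diameter hypothesis on $\Gamma'$ combined with Claim~\ref{c:bicycle}. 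Lemma~\ref{lem:Dehn+} then forces $q$ to be commensurable with $r$, so $b$ is commensurable with $\pi$.

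The main obstacle is the combinatorial step of verifying that $R(\gamma) \setminus \bigcup_K S(K)$ is precisely two clean rectangles of the stated dimensions. The map $\gamma$ has multiple self-identifications (at $u$, at $v$, along $B$, and at the wrap-around $\gamma(0)=\gamma(L)=u$), producing a potentially intricate pattern of spliced pairs; one must argue that after accounting for all genuine chord squares the only surviving pieces of $Spl(\gamma)\cap R(\gamma)$ are the two bar-based rectangles. A related subtlety is the bookkeeping for chord squares whose endpoints straddle the boundary between a cycle and the bar, or that involve $u$ or $v$: one must check that they either fit cleanly into the rectangle tiling or contribute commensurable-side squares that can be absorbed into the Dehn-type count without disturbing the rectangle structure.
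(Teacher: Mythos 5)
Your proposal matches the paper's proof essentially step for step: same choice of $\gamma$, same constants $r=\pi\sqrt2$, $q=b\sqrt2$, $a=(l_1+l_2)/\pi$, same application of Lemma~\ref{lem:Dehn+}, and the same use of Claims~\ref{c:cycle}(b),(c) and~\ref{c:bicycle} to verify the area condition. The one place you leave slightly vague, the existence of the extra chord supplying the strict inequality, is handled in the paper by taking $x\in C_1$ at distance $\ge\pi$ from $u$ in $C_1$ and $y\in C_2$ at distance $\ge\pi$ from $v$ in $C_2$: the chord used by $(x,y)$ then necessarily has its endpoints in $C_1-\{u\}$ and $C_2-\{v\}$, so it yields a genuine chord of $\gamma$ whose square is disjoint from those counted by Claim~\ref{c:cycle}(c); it is important to exclude $u,v$ as endpoints, not merely to find some chord between $C_1$ and $C_2$.
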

	
	\begin{proof}
		Let $B$ be a bar in  $\Gamma'$ with endpoints $u$ and $v$, of length $b$, joining cycles $C_1$ and $C_2$ of length $l_1$ and $l_2$ respectively such that $u \in C_1$, $v \in C_2$. Let $\gamma: C \to \Gamma'$ be a local isometry from a circle $C$ of length $l=l_1+l_2+2b$, traversing $B$ twice, and each of $C_1$ and $C_2$ once.
		Our goal is to apply Lemma~\ref{lem:Dehn+} to a tiling of $R(\gamma)$ defined in the beginning of the proof.
		
		 Unlike in Claim~\ref{c:cycle}, the set $Spl(\gamma)$ of spliced pairs is not empty, but it is not hard to analyze. Let $\gamma^{-1}(B)=[s_1,s_1+b] \cup [s_2-b,s_2]$ for some $s_1,s_2 \in C$.
		  Then $Spl(\gamma) $ consists of two rectangles: one with corners $(s_1-\pi,s_2),(s_1,s_2+\pi),(s_1+b+\pi,s_2-b),(s_1+b,s_2-b-\pi)$, and the other obtained  from it by permuting the coordinates.

		  Let $r =\pi \sqrt 2$, and let $q = b \sqrt 2$. Let $a=(l_1+l_2)/\pi$. Then $l\sqrt 2 = 2q+ar$, and thus $R(\gamma)$ can be considered as a product of a circle $\bd R$ of length $2q+ar$ and an interval of length $q+(a/2-1)r$.
		  Each of rectangles in $Spl(\gamma)$ has a side of length $r$ parallel to $\bd R(\gamma)$, and a side of length $q+r$ orthogonal to it.  The cylinder $R(\gamma)$ is tiled by the rectangles in $Spl(\gamma)$ and squares in $\{S(K)\}_{K \in \mc{K}(\gamma)}$, and the first three conditions of Lemma~\ref{lem:Dehn+} are satisfied for this tiling. Note that Lemma~\ref{lem:Dehn+} would immediately imply the claim.
		
		  Thus to prove the claim it remains to verify the last condition of Lemma~\ref{lem:Dehn+}, i.e.\ to show that the total area of the squares corresponding to the chords of $\gamma$ with length commensurable with $\pi$ is strictly greater than $(a-4)r^2 = 2\pi(l_1+l_2)-8\pi^2$.
		
		  By Claim~\ref{c:bicycle}, every chord of $C_1 \cup C_2$ has length commensurable with $\pi$, and if such a chord has no endpoint in $\{u,v\}$ then it corresponds to a chord of $\gamma$ of the same length.
		  By Claim~\ref{c:cycle} c) the total area of the chords of $C_i$ with no endpoint in $\{u,v\}$ is at least  $2\pi l_i-4\pi^2$.
	It remains to find at least one chord of $C_1 \cup C_2$ with one endpoint in $C_1 - \{u\}$ and another in $C_2 - \{v\}$. Indeed, consider a point $x \in C_1$ at distance at least~$\pi$ from $u$ in $C_1$, and a point $y \in C_2$ at distance at least~$\pi$ from~$u$ in $C_2$. Then the chord used by $(x,y)$ cannot have an endpoint in $\{u,v\}$ and so is as required.
	  		\end{proof}

Let $V(\Gamma'), E(\Gamma')$ denote the vertex and edge set of the graph $\Gamma'$.	
We finish the proof of the theorem by reducing it to Claims~\ref{c:cycle} a) and~\ref{c:dumbbell}. For the reduction it will be convenient for us to think of subgraphs of $\Gamma'$ as elements of $\bb{Q}^{E(\Gamma')}$, the vector space of formal linear combinations of edges of $\Gamma'$ with rational coefficients. Thus we identify every subgraph $\Lambda$ of $\Gamma'$ with $\sum_{e \in E(\Lambda)}e$. The theorem immediately follows from Claims~\ref{c:cycle} a) and~\ref{c:dumbbell} and the next claim, which uses the above convention.

\begin{claim}\label{c:reduction}
Every segment of $\Gamma'$ is a rational linear combination of cycles and bars of $\Gamma'$.
\end{claim}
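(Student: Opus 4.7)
\medskip

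\noindent\textbf{Proof plan.} The plan is to prove that each segment $P$ of $\Gamma'$, with endpoints $u, v$, is a rational combination of cycles and bars of $\Gamma'$, by dichotomizing on whether the edges of $P$ are all bridges of $\Gamma'$ or lie in a common $2$-vertex-connected block. The dichotomy is justified by the structural observation that every interior vertex $w$ of $P$ has $\deg_{\Gamma'}(w)=2$ with both incident edges belonging to $P$; any simple cycle of $\Gamma'$ through $w$ must then traverse both of those edges, and iteratively, the entire maximal sub-path of $P$ through~$w$, until it reaches an endpoint of $P$. It follows that either every edge of $P$ is a bridge, or $P$ is contained in some simple cycle of $\Gamma'$ (and hence all edges of $P$ lie in a common $2$-vertex-connected block).

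\medskip

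\noindent\textit{Block case.} When $P$ lies in a $2$-vertex-connected block $B$, I would seek three internally disjoint $u$--$v$ paths $P, P', P''$ in $\Gamma'$; the cycles $C_1 = P \cup P'$, $C_2 = P \cup P''$ and $C_3 = P' \cup P''$ then satisfy the theta identity $P = \tfrac{1}{2}(C_1 + C_2 - C_3)$, expressing $P$ as a rational combination of cycles. Two of the paths exist in $B$ by Menger's theorem; a third is provided by the essentialness $\deg_{\Gamma'}(u), \deg_{\Gamma'}(v) \geq 3$ whenever $u$ or $v$ is a cut vertex of $\Gamma'$ with an external route to the other. When no third internally disjoint path exists---equivalently, $\{u,v\}$ has connectivity exactly $2$ in $\Gamma'$---I would induct on $|E(\Gamma')|$ by reducing along a $2$-vertex cut of $\Gamma'$ separating $u$ from $v$.

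\medskip

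\noindent\textit{Bridge case.} When all edges of $P$ are bridges, the minimum-degree-$2$ hypothesis implies every connected component of $\Gamma'\setminus \{u\}$ contains a cycle (a connected graph with all vertex degrees at least $2$ except possibly one of degree $1$ contains a cycle). If both $u$ and $v$ lie on cycles $C_u, C_v$ of $\Gamma'$, then the blocks containing them are separated in the block-cut tree of $\Gamma'$ by the bridges of $P$, hence vertex-disjoint, so $C_u \cap C_v = \emptyset$ and $P$ is itself a bar. Otherwise, at an endpoint (say $u$) with all incident edges being bridges, $\deg_{\Gamma'}(u)\geq 3$ yields three branches of $\Gamma' \setminus \{u\}$ each containing a cycle $C^{(i)}$; taking the three bars $B_{ij} = R_i + R_j$ through $u$, where $R_i$ is the path from $u$ into branch $i$ to $C^{(i)}$, the identity $R_i = \tfrac{1}{2}(B_{ij} + B_{ik} - B_{jk})$ expresses each half-bar as a rational combination of bars. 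The segment $P$ is then a half-bar, or a difference of two half-bars based at $u$ and $v$ sharing a common target cycle.

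\medskip

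\noindent The main obstacle is the low-connectivity sub-case of the block case, which requires an induction via a $2$-cut reduction that preserves the minimum-degree and essentialness conditions of $\Gamma'$ and ensures that cycles and bars of the reduced graph lift to cycles and bars of $\Gamma'$.
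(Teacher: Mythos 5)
Your approach is genuinely different from the paper's: the paper takes $\Gamma'$ minimal (subject to min degree $\geq 2$ and $P$ being a segment), suppresses degree-$2$ vertices so every vertex has degree $\geq 3$, deduces that every non-loop edge must touch $P$, and then exhaustively enumerates the handful of resulting isomorphism types. You instead propose a structural reduction through the block tree. The bridge case and the three-disjoint-paths subcase of the block case are sound, and the formulas $P = \tfrac{1}{2}(C_1 + C_2 - C_3)$ and $R_i = \tfrac{1}{2}(B_{ij} + B_{ik} - B_{jk})$ are exactly the rational combinations that also appear, case by case, in the paper's enumeration.

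However, there is a genuine gap in the block case when only two internally disjoint $u$--$v$ paths exist. The proposed ``$2$-vertex cut reduction'' is not worked out, and it cannot even be stated when $u$ and $v$ are adjacent, since no set of internal vertices then separates them. This is not a corner case: it is exactly one of the configurations the paper analyses. Take $\Gamma'$ consisting of $P = uv_1$, a loop at $u$, a parallel edge $uv_2$, an edge $vw$, and a loop at $w$. Here $P$ lies in the $2$-connected digon $\{uv_1,uv_2\}$, there are only two internally disjoint $u$--$v$ paths, and $u,v$ are adjacent, so no internal $2$-cut exists. The paper resolves this by reaching for \emph{bars}: $wv+P$ joins the loops at $w$ and $u$, $wv$ joins the loop at $w$ to the cycle $uv_1+uv_2$, and $P=(wv+P)-wv$. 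Your block case, as formulated, never uses bars at all, so it cannot produce this expression. Even where a genuine internal $2$-cut $\{a,b\}$ does exist, you would still need to explain how the two halves retain the degree hypotheses and, more seriously, how cycles and bars of the reduced graphs lift to cycles and bars of $\Gamma'$ --- a cycle crossing the cut in one half is not a cycle of $\Gamma'$, and need not pair with a partner in the other half to form one. These are the obstacles that the paper's minimization-plus-enumeration strategy is designed to avoid.
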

	
\vskip10pt \noindent \emph{Proof.}
	Let $P$ be a segment of $\Gamma'$. We assume without loss of generality that $\Gamma'$ is chosen minimal subject to the conditions that the minimum degree of $\Gamma'$ is at least two, and $P$ is a segment of $\Gamma'$. We further assume by suppressing vertices of degree two in $\Gamma'$ that every vertex of $\Gamma'$ has degree at least three. In particular, every segment of $\Gamma'$ is an edge. Subject to these assumptions there are only a few isomorphism types of graphs to consider, and the proof proceeds by exhaustive case analysis.
	
	Note first that every non-loop edge $e \in E(\Gamma')$ shares at least one endpoint with $P$, as otherwise $\Gamma' - e$ contradicts the minimality of $\Gamma'$. Similarly, if $w \in V(\Gamma') -V(P)$ is incident to a loop then $\deg(w)=3$. (We use $\deg(x)$ to denote the degree of a vertex $x$ in $\Gamma'$.)  By the previous observation the non-loop edge incident to $w$ shares an endpoint with $P$. We call such a vertex $w$ a \emph{pseudoleaf} of $\Gamma'$.
	
	Let $u$ and $v$ be the endpoints of $P$. Suppose first that $\deg(u) \geq 4$. Let $e \in E(\Gamma')-E(P)$ be any edge incident to $u$. By the minimality of $\Gamma'$, either $u$ or $v$ has degree at most two in $\Gamma'- e$. Thus either $e$ is a loop and $\deg(u) = 4$, or $e$ joins $u$ and $v$ and $\deg(v) = 3$. As one of these outcomes holds for every edge in $E(\Gamma')-E(P)$ incident to $u$, we conclude that $\Gamma'$ consists of $P$, a loop incident to~$u$, an edge parallel to $P$, a pseudoleaf adjacent to $v$, which we denote by $w$, and a loop incident to $w$, see Figure~\ref{fig:Claim4}(a).
	
   \begin{figure}[h!]
	\begin{center}
		\includegraphics[scale=0.6]{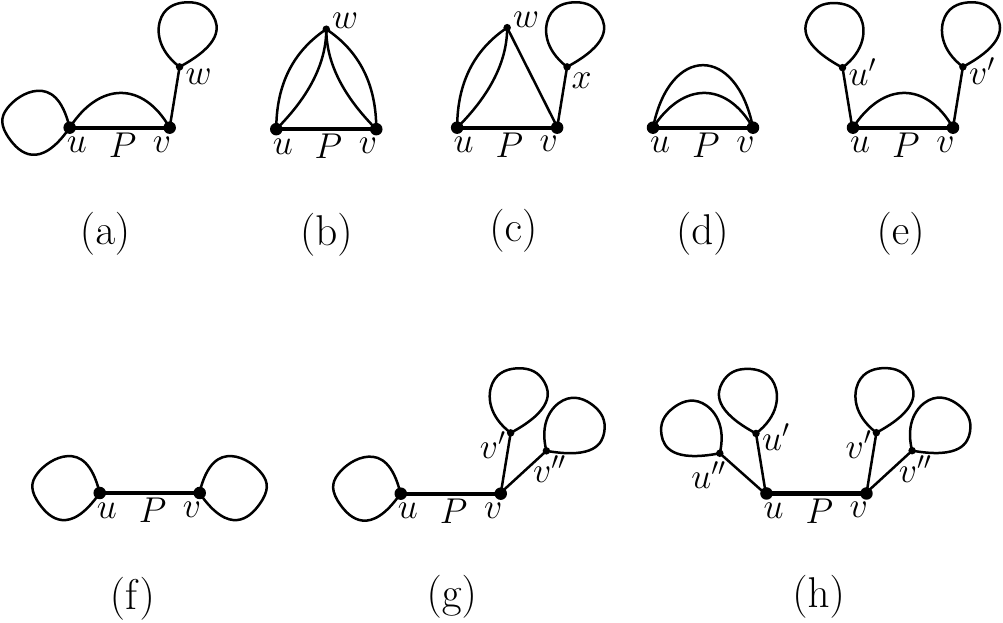}
	\end{center}
	\caption{Cases in the proof of Claim~\ref{c:reduction}.}
	\label{fig:Claim4}
\end{figure}

	It is not hard to verify that the claim holds in this case, and we do so using the following notation, which will also be used in all the remaining cases.  We denote the edge with endpoints $x$ and $y$ by $xy$, and in the case when there are several such edges in $\Gamma'$ we denote them by $xy_1$, $xy_2$,\ldots. In particular, let $P=uv_1$.
	
	Returning to our case, note that $wv$ is a bar joining a loop at $w$ and a cycle $P + uv_2$, while $wv+P$ is a bar joining loops at $w$ and $u$. Thus $P=(wv+P)-wv$ is a difference of two bars, and the claim holds in this case. The case $\deg(v) \geq 4$ is symmetric.

    It remains to consider the case $\deg(u)=\deg(v)=3$. Suppose next that there exists $w \in V(\Gamma')-\{u,v\}$ such that $w$ is not a pseudoleaf. As every non pseudoleaf vertex in  $V(\Gamma')-\{u,v\}$ is incident to at least three edges which have $u$ or $v$ as their second endpoint, every vertex in $V(\Gamma')$ except $u,v$ and $w$ is a pseudoleaf.

    If $\deg(w)\geq 4$, then $w$ is joined to each of $u$ and $v$ by a pair of parallel edges, see Figure~\ref{fig:Claim4}(b).
    In this case each of $wv_1+wv_2$, $wu_1+wu_2$, $wv_1+wu_1+P$, $wv_2+wu_2+P$ is a cycle, and
    $P$ is a rational linear combination of these cycles. Thus the claim holds.

    If $\deg(w)=3$, we suppose without loss of generality that $w$ is joined to $u$ by a pair of parallel edges, and to $v$ by an edge. Then the remaining edge incident to $v$ must have a pseudoleaf, which we denote by $x$, as its second endpoint, see Figure~\ref{fig:Claim4}(c). In this case, $xv$ is a bar joining the loop at $x$ to the cycle $P+uw_1+wv$, and $xv+P$ is a bar joining the loop at $x$ and the cycle $uw_1+uw_2$. Therefore, $P$ is the difference of two bars, and the claim holds.

    We reduced our analysis to the case when $\deg(u)=\deg(v)=3$ and every vertex in $V(\Gamma')-\{u,v\}$ is a pseudoleaf. We now consider subcases depending on the number of edges joining $u$ to $v$.

    If there are three such edges, then $P+uv_2$, $P+uv_3$ and $uv_2+uv_3$ are each cycles, and $P$ is a rational linear combination of these cycles, see Figure~\ref{fig:Claim4}(d). If exactly two edges join $u$ to $v$, then there is a pseudoleaf adjacent to each of $u$ and~$v$. Denote these pseudoleaves by $u'$ and $v'$ respectively, see Figure~\ref{fig:Claim4}(e). Then $uu'$ is a bar joining the loop at $u'$ and $P+uv_2$, and similarly $vv'$ is a bar. The path $uu'+P+vv'$ is also a bar, joining the loops at $u'$ and $v'$, and $P$ is a rational linear combination of the above three bars.

    It remains to consider the case when $P$ is the unique edge between $u$ and $v$. In this case each of $u$ and $v$ is either incident to a loop, or adjacent to two pseudoleaves. Up to symmetry there are three final cases to consider.

    If both $u$ and $v$ are incident to a loop then $P$ is a bar, see Figure~\ref{fig:Claim4}(f). If $u$ is incident to a loop and $v$ is adjacent to pseudoleaves $v'$ and $v''$, then each of the paths $v'v+P$, $v''v+P$ and $v'v+v''v$ is a bar joining the loops at its endpoints, and $P$ is a rational linear combination of these bars, see Figure~\ref{fig:Claim4}(g). Finally, suppose that $v$ is adjacent to pseudoleaves $v'$ and $v''$, and $u$ is adjacent to pseudoleaves $u'$ and $u''$, see Figure~\ref{fig:Claim4}(h). Then $P$ is a rational combination of bars $v'v+v''v$, $u'u+u''u$, $v'v+P + u'u$, $v''v+P+u''u$. Thus the claim holds in this last case.
 \end{proof}

\begin{bibdiv}
\begin{biblist}

\bib{AZ}{book}{
	title={Proofs from the Book},
	author={Aigner, Martin}
	author={Ziegler, G{\"u}nter M.},
	volume={274},
	year={2018},
	publisher={Springer}
}

\bib{BB}{article}{
   author={Ballmann, Werner},
   author={Brin, Michael},
   title={Orbihedra of nonpositive curvature},
   journal={Inst. Hautes \'{E}tudes Sci. Publ. Math.},
   number={82},
   date={1995},
   pages={169--209 (1996)}}

\bib{Babu}{article}{
   author={Ballmann, Werner},
   author={Buyalo, Sergei},
   title={Nonpositively curved metrics on $2$-polyhedra},
   journal={Math. Z.},
   volume={222},
   date={1996},
   number={1},
   pages={97--134}}

\bib{BestvinaProblemOLD}{article}{
	AUTHOR = {Bestvina, Mladen},
	TITLE = {Questions in Geometric Group Theory},
	date = {2000},
	eprint = {https://www.math.utah.edu/~bestvina/eprints/questions.pdf}
}

\bib{BridsonProblem}{article}{
	AUTHOR = {Bridson, Martin R.},
	TITLE = {Problems concerning hyperbolic and ${\rm CAT}(0)$ groups},
	date = {2007},
	eprint = {https://docs.google.com/file/d/0B-tup63120-GVVZqNFlTcEJmMmc/edit}
}

\bib{BH}{book}{
   author={Bridson, Martin R.},
   author={Haefliger, Andr\'e},
   title={Metric spaces of non-positive curvature},
   series={Grundlehren der Mathematischen Wissenschaften [Fundamental
   Principles of Mathematical Sciences]},
   volume={319},
   publisher={Springer-Verlag},
   place={Berlin},
   date={1999}
   }

\bib{Cap}{article}{
	author={Caprace, Pierre-Emmanuel},
	title={Lectures on proper $\rm CAT(0)$ spaces and their isometry groups},
	conference={
		title={Geometric group theory},
	},
	book={
		series={IAS/Park City Math. Ser.},
		volume={21},
		publisher={Amer. Math. Soc., Providence, RI},
	},
	date={2014},
	pages={91--125}}

\bib{CL}{article}{
   author={Caprace, Pierre-Emmanuel},
   author={Lytchak, Alexander},
   title={At infinity of finite-dimensional ${\rm CAT}(0)$ spaces},
   journal={Math. Ann.},
   volume={346},
   date={2010},
   number={1},
   pages={1--21}}

\bib{Dehn}{article} {
	title = {{\"U}ber zerlegung von rechtecken in rechtecke},
	author = {Dehn, Max},
	journal = {Math. Ann.},
	volume = {57},
	number = {3},
	pages = {314--332},
	year = {1903},
}

\bib{Gri}{article}{
	AUTHOR = {Grigorchuk, Rostislav I.},
	TITLE = {Degrees of growth of finitely generated groups and the theory
		of invariant means},
	JOURNAL = {Izv. Akad. Nauk SSSR Ser. Mat.},
	VOLUME = {48},
	YEAR = {1984},
	NUMBER = {5},
	PAGES = {939--985},
}

\bib{Hadwiger}{book}{
	title={Vorlesungen {\"u}ber Inhalt, Oberfl{\"a}che und Isoperimetrie},
	author={Hadwiger, Hugo},
	year={1957}
	publisher={Springer}
}

\bib{HMP}{article}{
	author={Hanlon, Richard Gaelan},
	author={Mart\'{i}nez-Pedroza, Eduardo},
	title={Lifting group actions, equivariant towers and subgroups of
		non-positively curved groups},
	journal={Algebr. Geom. Topol.},
	volume={14},
	date={2014},
	number={5},
	pages={2783--2808},
}

\bib{H}{book}{
   author={Hatcher, Allen},
   title={Algebraic topology},
   publisher={Cambridge University Press, Cambridge},
   date={2002},
   pages={xii+544}}

\bib{I}{article}{
   author={Ivanov, Sergei},
   title={On Helly's theorem in geodesic spaces},
   journal={Electron. Res. Announc. Math. Sci.},
   volume={21},
   date={2014},
   pages={109--112}}

\bib{KapovichProblems}{article}{
	AUTHOR = {Kapovich, Michael},
	TITLE = {Problems on boundaries of groups and Kleinian groups},
	date = {2008},
	eprint = {https://docs.google.com/file/d/0B-tup63120-GOC15RW8zMDFZZjg/edit}
}

\bib{LP}{article}{
   author={Lamy, St\'ephane},
   author={Przytycki, Piotr},
   title={Presqu'un immeuble pour le groupe des automorphismes mod\'er\'es},
   journal={Ann. H. Lebesgue},
   status={to appear},
   eprint={arXiv:1802.00481},
   date={2020}}

\bib{Mar}{article}{
	author={Marquis, Timoth\'{e}e},
	title={A fixed point theorem for Lie groups acting on buildings and
		applications to Kac-Moody theory},
	journal={Forum Math.},
	volume={27},
	date={2015},
	number={1},
	pages={449--466}}

\bib{M}{article}{
   author={Masur, Howard},
   title={Closed trajectories for quadratic differentials with an
   application to billiards},
   journal={Duke Math. J.},
   volume={53},
   date={1986},
   number={2},
   pages={307--314}}

\bib{Osa}{article}{
	author={Osajda, Damian},
	title={Group cubization},
	note={With an appendix by Mika\"{e}l Pichot},
	journal={Duke Math. J.},
	volume={167},
	date={2018},
	number={6},
	pages={1049--1055}}

\bib{PapSwe}{article}{
	author={Papasoglu, Panos},
	author={Swenson, Eric},
	title={Boundaries and JSJ decompositions of ${\rm CAT}(0)$-groups},
	journal={Geom. Funct. Anal.},
	volume={19},
	date={2009},
	number={2},
	pages={559--590}}

\bib{PapSwe2}{article}{
	author={Papasoglu, Panos},
	author={Swenson, Eric},
	title={Finite cuts and ${\rm CAT}(0)$ boundaries},
	eprint={arXiv:1807.04086},
	status={sumbitted},
	date={2018}}

\bib{Parreau}{article}{
   author={Parreau, Anne},
   title={Sous-groupes elliptiques de groupes lin\'{e}aires sur un corps valu\'{e}},
   language={French, with English summary},
   journal={J. Lie Theory},
   volume={13},
   date={2003},
   number={1},
   pages={271--278}}

\bib{Sag}{article}{
	author={Sageev, Michah},
	title={Ends of group pairs and non-positively curved cube complexes},
	journal={Proc. London Math. Soc. (3)},
	volume={71},
	date={1995},
	number={3},
	pages={585--617}}

\bib{Swe}{article}{
	author={Swenson, Eric L.},
	title={A cut point theorem for ${\rm CAT}(0)$ groups},
	journal={J. Differential Geom.},
	volume={53},
	date={1999},
	number={2},
	pages={327--358}}

\bib{Z}{article}{
   author={Zeeman, E. Christopher},
   title={Relative simplicial approximation},
   journal={Proc. Cambridge Philos. Soc.},
   volume={60},
   date={1964},
   pages={39--43}}

\end{biblist}
\end{bibdiv}

\end{document}